\newcommand{\NS}[0]{{\operatorname{NS}}}
\newcommand{\minimo}{\min (\dim X+\sigma(M_1 \dots, M_k), 2\dim X)}
  \newtheorem{theorem}{Theorem}[section]
  \newtheorem{lemma}[theorem]{Lemma}
  \newtheorem{proposition}[theorem]{Proposition}
  \newtheorem{corollary}[theorem]{Corollary}
  \newtheorem{conjecture}[theorem]{Conjecture}
\theoremstyle{definition}
  \newtheorem{notation}[theorem]{Notation}
  \newtheorem{definition}[theorem]{Definition}
  \newtheorem{question}[theorem]{Question}
\newtheorem{remark}[theorem]{Remark}
\theoremstyle{remark}
\numberwithin{equation}{section}
\title[Characterization of products of projective spaces]{Characterization of products of projective spaces \\ via nef complexity}
\author[Enwright]{Joshua Enwright}
\address{UCLA Mathematics Department, Box 951555, Los Angeles, CA 90095-1555, USA
}
\email{jlenwright1@math.ucla.edu }
\author[Filipazzi]{Stefano Filipazzi}
\address{Department of Mathematics, Duke University, 120 Science Drive, 117 Physics Building, Campus Box 90320, Durham, NC 27708-0320, USA}
\email{stefano.filipazzi@duke.edu}
\author[Gongyo]{Yoshinori Gongyo}
\address{Graduate School of Mathematical Sciences, the University of Tokyo, 3-8-1 Komaba, Meguro-ku, Tokyo 153-8914, Japan}
\email{gongyo@ms.u-tokyo.ac.jp}
\author[Moraga]{Joaqu\'in Moraga}
\address{UCLA Mathematics Department, Box 951555, Los Angeles, CA 90095-1555, USA}
\email{jmoraga@math.ucla.edu}
\author[Svaldi]{Roberto Svaldi}
\address{Dipartimento di Matematica "F. Enriques", Via Saldini 50, 20133 Milano (MI), Italy}
\email{roberto.svaldi@unimi.it}
\author[Wang]{Chengxi Wang}
\address{Yau Mathematical Sciences Center, Jingzhai, Tsinghua University, Haidian District, Beijing, China 100084}
\email{chxwang@tsinghua.edu.cn}
\author[Watanabe]{Kiwamu Watanabe}
\address{Department of Mathematics, Faculty of Science and Engineering, Chuo University. 1-13-27 Kasuga, Bunkyo-ku, Tokyo 112-8551, Japan}
\email{watanabe@math.chuo-u.ac.jp}
\subjclass[2020]{Primary 14J45; 
Secondary 14E30}
\keywords{Fano variety, projective space, Mukai's conjecture, nef complexity, products.}
\begin{document}

\begin{abstract}
We define the {\em nef complexity} of a projective variety $X$. This invariant compares $\dim X+\rho(X)$
with the sum of the coefficients of nef partitions of $-K_X$. We prove that the nef complexity is non-negative
and zero precisely for products of projective spaces.
We classify smooth Fano threefolds with nef complexity at most one. In a similar vein, we prove Mukai's conjecture for smooth Fano varieties for which every extremal contraction is of fiber type and study smooth images of products of projective spaces.
Along the way, we answer positively a question of J. Starr regarding the nef cone of smooth Fano varieties.
\end{abstract}

\maketitle

\setcounter{tocdepth}{1} 
\tableofcontents

\section{Introduction}
Throughout this article we work over the field 
$\mathbb C$
of complex numbers.

Fano varieties constitute one of the three fundamental building blocks in the classification of algebraic varieties. 
The simplest examples of Fano varieties are given by projective spaces, which are fundamental and ubiquitous varieties in projective algebraic geometry. 
Such ubiquity has inspired and motivated the search for simple characterizations of projective spaces among smooth Fano and even algebraic varieties at large. 

This natural problem has seen many interesting and profound developments over the years.
To name some of the most well-known: Mori's solution to Hartshorne's conjecture~\cite{Mor79};
or, the cohomological and topological characterization provided by the Kobayashi--Ochiai Theorem~\cite{kooc}.
While the former result characterizes projective spaces as the only smooth projective varieties with ample tangent bundle, 
the latter shows that the index of the canonical class within the Picard group of a Fano variety of dimension $n$
is bounded from above by $n+1$, and that equality is achieved only by $\mathbb P^n$.

Several other characterizations of projective spaces are known from the viewpoint of
deformations of rational curves~\cite{CMSB97}, 
of topology~\cite{HK57}, 
of the existence of special K\"ahler--Einstein metrics~\cite{sY}, 
cohomology~\cite{ADK08}, etc.

\medskip

Products of projective spaces can be considered as the next example of simple smooth projective varieties, after projective spaces.
Unlike the case of projective spaces, 
not many characterizations are known for products of projective spaces.
In this direction, a motivating question is offered by the following very famous conjecture of Mukai \cite{mukai}.

\begin{conjecture}
\label{mukai conjecture}
Let $X$ be a smooth Fano variety.
Let $\rho(X)$ be the Picard number of $X$, and let
\begin{align*}
    i(X) \coloneqq \max\{r \in \mathbb{ Z}\mid  -K_X \sim_{\mathbb{Z}} r H \text{ for some Cartier divisor $H$} \}
\end{align*}
be its Fano index.
Then, 
\begin{align}
\label{diseq:mukai.conj}
\dim X+\rho(X)-i(X) \cdot \rho(X) \geq 0.
\end{align}
Moreover, equality holds if and only if $X \simeq \mathbb{P}^{i(X)-1} \times \cdots  \times \mathbb{P}^{i(X)-1}$.
\end{conjecture}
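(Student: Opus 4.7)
The plan is to deduce Mukai's conjecture from the non-negativity of the nef complexity $c(X)$ announced in the abstract. The guiding idea is to establish the chain of inequalities
\begin{equation*}
0 \;\leq\; c(X) \;\leq\; \dim X + \rho(X) - i(X)\cdot \rho(X),
\end{equation*}
from which both the stated inequality and the equality case of the conjecture will follow at once.

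The first and decisive step is to construct a nef partition of $-K_X$ whose coefficients sum to at least $i(X)\cdot \rho(X)$. Since $X$ is a smooth Fano variety, $-K_X \sim_{\mathbb Z} i(X)\,H$ for a primitive Cartier divisor $H$ that is ample, and the Cone Theorem guarantees that $\mathrm{Nef}(X)$ is a rational polyhedral cone with exactly $\rho(X)$ extremal rays. Writing $H = \sum_{j=1}^{\rho(X)} c_j\,N_j$ with $N_1,\dots,N_{\rho(X)}$ the primitive integral generators of these extremal rays yields the nef partition
\begin{equation*}
-K_X \;=\; \sum_{j=1}^{\rho(X)} i(X)\,c_j\,N_j,
\end{equation*}
whose total coefficient is $i(X)\sum_j c_j$. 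The crux is to show $\sum_j c_j \geq \rho(X)$, i.e., that $H$ is sufficiently balanced across the extremal rays of $\overline{NE}(X)$; granting this, one obtains the desired upper bound $c(X) \leq \dim X + \rho(X) - i(X)\cdot \rho(X)$. This is the positivity statement intimately connected to the question of J.~Starr mentioned in the abstract.

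Once this upper bound on $c(X)$ is secured, the main theorem of the paper, asserting non-negativity of the nef complexity, immediately delivers the Mukai inequality. For the equality case, if $\dim X + \rho(X) - i(X)\cdot \rho(X) = 0$, then $c(X)=0$, and the classification of varieties of vanishing nef complexity forces $X \cong \mathbb{P}^{n_1} \times \cdots \times \mathbb{P}^{n_k}$ for some integers $n_j$. The additional numerical identity $\dim X + \rho(X) = i(X)\cdot \rho(X)$, combined with the formula $i(X) = \gcd(n_1+1,\dots,n_k+1)$ for such a product, forces all $n_j$ to coincide and to equal $i(X)-1$, as required.

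The main obstacle is the balanced positivity of $H$ in Step one: producing a nef partition of $-K_X$ with coefficient sum at least $i(X)\cdot \rho(X)$ is essentially equivalent to Mukai's conjecture itself, and in full generality it is open. Under the hypothesis that every extremal contraction of $X$ is of fiber type, one can instead induct on $\dim X + \rho(X)$: given an extremal contraction $\varphi\colon X \to Y$ of fiber type with general fiber $F$, both $F$ and $Y$ are smooth Fano, the indices $i(F)$ and $i(Y)$ are controlled by $i(X)$, and $\rho(X) = \rho(Y) + \rho(F/Y)$, so the nef partitions supplied by the inductive hypothesis on $Y$ and $F$ can be assembled into the required partition on $X$. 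Divisorial contractions, which resist such an inductive assembly because the exceptional divisor couples the geometries of $X$ and the contracted variety in a nonlinear way, are the genuine obstruction to extending the argument to the general case.
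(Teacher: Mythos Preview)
The statement you are attempting is Mukai's conjecture, which the paper does \emph{not} prove in general; it is recorded as Conjecture~\ref{mukai conjecture} and established only under the extra hypothesis that every extremal contraction of $X$ is of fiber type (Theorem~\ref{many MFS}). So you should not expect a full proof here.

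More seriously, the chain of inequalities you propose is false. Your second inequality $c_X \leq \dim X + \rho(X) - i(X)\rho(X)$, equivalently $\tau_X \geq i(X)\rho(X)$, does not hold. For $X = \mathrm{Bl}_8(\mathbb{P}^2)$ one has $i(X)=1$ and $\rho(X)=9$, so $i(X)\rho(X)=9$; but the paper computes $\tau_X = 1$ (Theorem~\ref{thm:main.dP}). Thus $c_X = 10 > 2 = \dim X + \rho(X) - i(X)\rho(X)$, and the Mukai inequality cannot be recovered from $c_X \geq 0$ along your route. The step you describe as ``essentially equivalent to Mukai's conjecture itself'' is in fact strictly stronger than Mukai, and false. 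There is also a concrete error earlier: the Cone Theorem does not guarantee that $\mathrm{Nef}(X)$ has exactly $\rho(X)$ extremal rays; for $\mathrm{Bl}_r(\mathbb{P}^2)$ with $r \geq 3$ the nef cone is not simplicial. Simpliciality of $\mathrm{Nef}(X)$ under the fiber-type hypothesis is in fact one of the nontrivial results of the paper (Theorem~\ref{simpliciality}).

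For the fiber-type case that the paper does settle, the argument is quite different from your inductive sketch and does not go through nef complexity or through assembling nef partitions from a base and fiber (note that the base of a Mori fiber space need not be smooth, so your induction would already stall there). Instead, the paper shows directly that the $\rho(X)$ extremal rays of fiber type are linearly independent (Proposition~\ref{linear-ind}), invokes the bound $\dim X \geq \sum_j \dim F_j$ for the general fibers $F_j$ of the $\rho(X)$ elementary contractions, and bounds each $\dim F_j$ below by $i(X)-1$ via Kobayashi--Ochiai together with the divisibility $i(X) \mid i(F_j)$. The equality case is then handled by Occhetta's characterization of products of projective spaces through unsplit covering families of rational curves.
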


The literature on Mukai's conjecture is rather vast, see, for example,~\cite{And09,bcdd,Fuj14b,Fuj14,Fuj16,Rei24,Suz20,w-mukai}. 
A fruitful approach that has been heavily investigated over the past few decades was to study the above conjecture using families of deformation of rational curves on $X$, cf., for example,~\cite{bcdd,w-mukai}. 

Inspired by this technique, several mathematicians have also considered the {\em generalized Mukai conjecture} 
which, for a smooth Fano $X$, predicates the 
same inequality as in
\eqref{diseq:mukai.conj}
with the Fano index 
$i(X)$ 
replaced by the {\em pseudo-index} $\iota(X)$, 
see, e.g.,~\cite{And09,aco,bcdd,FUJ19,GH17,Suz20,w-mukai};
the pseudo-index of a smooth Fano variety $X$ is defined by
$    \iota(X) 
    \coloneqq
    \min
    \left\{
    -K_X \cdot C
    \; \middle \vert \;
    \text{$C \subset X$ is a proper rational curve}
    \right\}
$.

\medskip

In~\cite{g-mukai}, in an attempt to connect the characterization of projective spaces and decompositions of $-K_X$ as a sum of nef line bundles, the third author proposed the following definition.

 \begin{definition}\label{total-index}
 Let $X$ be a normal projective variety.
 Assume that $-K_X$ is $\mathbb Q$-Cartier, nef, and $K_X \not \equiv 0$.
 The {\em total index}
 $\tau_X$
 of 
 $X$ 
 is defined as 
 \[ 
 \tau_X 
 \coloneqq
 \sup 
 \left \{
 \sum_{i=1}^k a_i 
 \; \middle \vert \;
 \begin{array}{l}
 \text{for $i=1, \dots, k$, } 
 a_i \in \mathbb{Q}_{>0}, 
 \text{ and }
 \exists \
 \text{$L_i$ nef and Cartier on $X$}
    \\
 \text{such that }
 L_i \not \equiv 0
 \text{ and }
 -K_X
 \equiv 
 \sum_{i=1}^k a_i L_i
 \end{array}
 \right \}.
 \]
 The {\em nef complexity}
 $c_X$
 of 
 $X$ 
 is 
 defined by
 $c_X \coloneqq \dim X + \rho(X) - \tau_X$.
 \end{definition} 

The nef complexity is a variation on the theme of finding quantities that measure (either locally at a germ, or globally, in the proper case) how to decompose the anticanonical class using nef (resp. effective divisors) under suitable assumptions on the singularities of the underlying variety (resp. log pair).
Other notions of complexity have been previously defined
\cite{shok.complements} 
and used to characterize toric varieties~\cite{BMSZ} and toric singularities~\cite{MS21}.

Let us note that if $X$ is a Mori dream space, which is guaranteed in the $\mathbb Q$-factorial klt Fano case by \cite{bchm}, 
then $\tau_X$ is indeed a minimum as the nef cone of $X$ is finitely generated. 

In \cite{g-mukai}, the third author also proposed the following conjecture:

 \begin{conjecture}[Mukai-type conjecture]
 \label{Mukai type conjecture}
 Let $X$ be a smooth Fano variety.
 Then $c_X\geq 0$. 
 If the equality holds, then $X$ is a product of projective spaces.
 \end{conjecture}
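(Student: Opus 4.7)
The plan is to exploit the polyhedrality of the nef cone of a smooth Fano variety, realize each extremal nef summand as a contraction via the basepoint-free theorem, and combine length bounds for rational curves with Kobayashi--Ochiai to control the coefficients of the decomposition.

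Since $X$ is smooth Fano, by the Cone Theorem and Kleiman's criterion both $\overline{NE}(X)$ and $\mathrm{Nef}(X)$ are rational polyhedral, so the supremum defining $\tau_X$ is attained. A convexity argument reduces me to a maximizing decomposition $-K_X \equiv \sum_{j=1}^{k} a_j L_j$ in which each $L_j$ is a primitive integral generator of an extremal ray of $\mathrm{Nef}(X)$. Since $L_j$ is $K_X$-negative and nef, the Kawamata--Shokurov basepoint-free theorem yields a contraction $\phi_j : X \to Y_j$ with $L_j = \phi_j^{*} H_j$ for an ample Cartier divisor $H_j$ on the normal projective variety $Y_j$; the duality between $\mathrm{Nef}(X)$ and $\overline{NE}(X)$ forces $\rho(Y_j) = 1$. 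The combined morphism $\Phi := (\phi_1, \ldots, \phi_k) : X \to Y_1 \times \cdots \times Y_k$ is finite, since a curve contracted by $\Phi$ would be orthogonal to every $L_j$, hence to $-K_X$, contradicting ampleness.

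I then aim to show $a_j \leq \dim Y_j + 1$ for each $j$. For general $y_i \in Y_i$ with $i \neq j$, the scheme $Z_j := \bigcap_{i \neq j}\phi_i^{-1}(y_i)$ has dimension $\dim Y_j$ and maps finitely and surjectively onto $Y_j$. Producing a rational curve $C_j \subset Z_j$ lifting a minimal free rational curve on $Y_j$ yields $L_i \cdot C_j = 0$ for $i \neq j$ and hence
\[
a_j \leq a_j (L_j \cdot C_j) = -K_X \cdot C_j \leq \dim Y_j + 1,
\]
the final bound coming from Kobayashi--Ochiai applied to the Picard-rank-one target $Y_j$ (using that $Y_j$, carrying an ample divisor pulling back to an extremal nef, inherits sufficient positivity from $X$). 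Summing over $j$,
\[
\tau_X \leq \sum_{j=1}^{k}(\dim Y_j + 1) = \Big(\sum_j \dim Y_j\Big) + \rho(X),
\]
so $c_X \geq 0$ reduces to the comparison $\sum_j \dim Y_j \leq \dim X$ (equivalently, $\Phi$ birational).

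The main obstacles are twofold: first, guaranteeing the existence of the rational curves $C_j$ inside the possibly singular slices $Z_j$, and second, upgrading finiteness of $\Phi$ to birationality \emph{before} assuming the equality case. I expect both to be resolved by induction on $\dim X + \rho(X)$, applying the statement to a general fiber of some $\phi_j$ and propagating rational curves and the birationality conclusion via the relative Mori cone. For the equality case $c_X = 0$, all intermediate inequalities must be equalities: Kobayashi--Ochiai forces $Y_j \simeq \mathbb{P}^{\dim Y_j}$, and $\Phi$ becomes a finite birational morphism between smooth varieties of equal dimension, hence an isomorphism by Zariski's main theorem, giving $X \simeq \prod_j \mathbb{P}^{\dim Y_j}$.
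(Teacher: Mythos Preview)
Your reduction to $\sum_j \dim Y_j \le \dim X$ (equivalently, $\Phi$ birational) is a reduction to a false statement. Take $X=\operatorname{Bl}_p(\mathbb{P}^2)$: the extremal rays of $\mathrm{Nef}(X)$ are generated by $H$ and $H-E$, so $Y_1=\mathbb{P}^2$, $Y_2=\mathbb{P}^1$, and $\sum_j\dim Y_j=3>2=\dim X$; the map $\Phi$ is the closed embedding of $X$ as a $(1,1)$-divisor in $\mathbb{P}^2\times\mathbb{P}^1$, certainly not birational. The same example breaks two earlier steps as well. The slice $Z_1=\phi_2^{-1}(y_2)\simeq\mathbb{P}^1$ does \emph{not} surject onto $Y_1=\mathbb{P}^2$, so the asserted finite surjection $Z_j\to Y_j$ and the equality $\dim Z_j=\dim Y_j$ both fail. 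And Kobayashi--Ochiai on $Y_j$ bounds the Fano index of $Y_j$, which says nothing about $-K_X\cdot C_j$ for a curve $C_j\subset X$; there is no bridge in your outline between the two. The hoped-for induction cannot rescue this, because the statement you are inducting toward ($\Phi$ birational, $\sum_j\dim Y_j\le\dim X$) is already false for smooth Fanos of low dimension.

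The paper's argument is structurally different. The engine is a numerical Fujita-type theorem: if $L_1,\dots,L_k$ are nef and big Cartier on a normal $n$-fold and $\sum a_i\ge n+1$, then $K_X+\sum a_iL_i$ is pseudo-effective. This handles the case where every summand $M_i$ is big (forcing $\sum m_i\le n+1$, with equality only for $\mathbb{P}^n$ via Kobayashi--Ochiai). When some $M_i$ is nef but not big, one takes a maximal fiber-type contraction $X\to Y_{G_1}$ through which $M_i$ factors, splits $\sum m_i$ into a general-fiber contribution and a base contribution via the canonical bundle formula, and inducts on $(\dim X,\rho(X))$; at equality, vanishing of the discriminant and moduli parts on the base yields isotriviality and then the product decomposition.
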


 We remark that Conjectures \ref{mukai conjecture} and  \ref{Mukai type conjecture} are equivalent to the Kobayashi--Ochiai theorem \cite{kooc} when the Picard number is 1, and thus do hold under such hypothesis.

\medskip
 
One of our main results in this article is a proof of the following more general version of Conjecture ~\ref{Mukai type conjecture}, which deals also with singular Fano varieties, and beyond, using generalized pairs.
We refer the reader to~\cite{Bir20} for the language of generalized pairs, see also Section~\ref{preparation}.

\begin{theorem}[{cf.~Theorem \ref{gen.mukai.thm}}]\label{mukai type gklt}
Let $X$ be a
normal projective variety.
Assume that there exists an effective divisor 
$\Delta$
and a sum of nef divisors
$\sum_{i=1}^k m_iM_i$
on 
$X$
such that 
\begin{align*}
K_X+
\Delta+ 
\sum_{i=1}^k m_iM_i
\equiv 0
\end{align*}
and
\begin{enumerate}
\item 
$(X, \Delta+\sum_{i=1}^k m_iM_i)$ is generalized klt;
\item 
for all 
$i=1, \dots, k$, 
$M_i$ 
is nef and Cartier,
and $M_i \not\equiv 0$;
and 
\item 
$\Delta+\sum_{i=1}^k m_iM_i$
is big.
\end{enumerate}
Then 
\begin{align}
\label{gen.mukai.thm.eqn-intro}
\sum_{i} m_i \leq 
\dim X+\rho(X)
\end{align}
Moreover, if the equality holds in \eqref{gen.mukai.thm.eqn-intro},
then 
$X$ is isomorphic to a product of projective spaces, i.e.,
$X\simeq \prod_{j=1}^l \mathbb P^{t_j}$.
\end{theorem}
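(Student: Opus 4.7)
The plan is to induct on $\dim X$. In dimension one, $X \cong \mathbb{P}^1$ is the only generalized klt Fano curve, and the claim follows from a direct degree computation. Going forward I split on the Picard number: when $\rho(X) = 1$, any nef Cartier divisor is a positive integer multiple of the ample generator $H$ of $N^1(X)$, so writing $M_i \equiv a_i H$ with $a_i \in \mathbb{Z}_{\geq 1}$ gives $\sum m_i \leq \sum m_i a_i \leq i(X) \leq \dim X + 1 = \dim X + \rho(X)$, where the bound on the Fano index is a Kobayashi--Ochiai type result valid in the generalized klt setting. Equality here forces $X \cong \mathbb{P}^{\dim X}$, all $a_i = 1$, and $\Delta = 0$.

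For $\rho(X) \geq 2$, the assumption that $\Delta + \sum m_i M_i$ is big together with generalized klt-ness makes $X$ of generalized Fano type. Running a $K_X$-MMP (possibly after a small $\mathbb{Q}$-factorial modification), I reach a Mori fiber space $\pi \colon X' \to Y$ arising from a $K_X$-negative extremal ray $R$. Let $C$ be a minimal rational curve in $R$, and partition the indices into $I_{\mathrm{hor}} = \{i : M_i \cdot R > 0\}$ and $I_{\mathrm{vert}} = \{i : M_i \cdot R = 0\}$. Since the $M_i$ are nef and Cartier, $M_i \cdot C \geq 1$ for $i \in I_{\mathrm{hor}}$. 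Restricting the pair to a general fiber $F$ of $\pi$ gives a generalized klt Fano pair, and Kobayashi--Ochiai on $F$ together with adjunction yields $-K_X \cdot C = -K_F \cdot C \leq \dim F + 1 = \dim X - \dim Y + 1$. Combining with
\[
-K_X \cdot C = \sum_{i} m_i (M_i \cdot C) + \Delta \cdot C \geq \sum_{i \in I_{\mathrm{hor}}} m_i
\]
yields $\sum_{i \in I_{\mathrm{hor}}} m_i \leq \dim X - \dim Y + 1$.

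For the vertical part, each $M_i$ with $i \in I_{\mathrm{vert}}$ is numerically trivial on the general fiber of $\pi$, and since $R$ spans $N_1(X'/Y)$ it descends (by rigidity over the Mori dream space) to a nef Cartier divisor $N_i$ on $Y$. A canonical bundle formula for generalized pairs then produces a generalized klt pair $(Y, \Delta_Y + \sum_{i \in I_{\mathrm{vert}}} m_i N_i)$ with trivial generalized log-canonical class and big boundary-plus-moduli. The inductive hypothesis applied to $Y$ gives $\sum_{i \in I_{\mathrm{vert}}} m_i \leq \dim Y + \rho(Y)$. Summing and using $\rho(X) = \rho(Y) + 1$ for an extremal MFS produces the desired inequality $\sum m_i \leq \dim X + \rho(X)$.

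The main obstacle lies in the equality case. Equality must occur in each of the two bounds: from the horizontal bound, the general fiber of $\pi$ must be $\mathbb{P}^{\dim X - \dim Y}$ (by the equality case of Kobayashi--Ochiai) and $\Delta$ restricts to zero on a general fiber; from the inductive hypothesis applied to $Y$, the base $Y$ is already a product of projective spaces. Upgrading these observations to a genuine product decomposition of $X$ requires showing that $\pi$ is a trivial $\mathbb{P}^{\dim F}$-bundle compatible with the factorization of $Y$. This bundle-triviality and product-compatibility step is the most delicate, requiring a rigidity argument for Mori fiber spaces with projective space fibers over a product of projective space base and ruling out nontrivial twists; one will also need to lift the product structure back from the MMP-modified model $X'$ to the original $X$.
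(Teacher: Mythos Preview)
Your overall inductive shape is the same as the paper's, but there is a genuine structural gap in the way you produce the fibration. You propose to run a $K_X$-MMP and land on a Mori fiber space $\pi\colon X'\to Y$, then intersect the $M_i$ with a minimal curve $C$ in the extremal ray on $X'$. The problem is that the $M_i$ live on $X$, not on $X'$; after flips or divisorial contractions the pushforwards $M_i'=\varphi_*M_i$ are in general only $\mathbb{Q}$-Cartier on $X'$. Consequently the integrality bound $M_i'\cdot C\geq 1$ for the horizontal indices can fail, and for the vertical indices the descent $M_i'=\pi^*N_i$ with $N_i$ \emph{Cartier} on $Y$ is no longer automatic. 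Both halves of your splitting then collapse. (Even the preliminary ``small $\mathbb{Q}$-factorial modification'' already disturbs Cartier-ness of the $M_i$.)

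The paper avoids this entirely by never running an MMP. Since $X$ is of Fano type, every nef divisor is semiample; so either all $M_i$ are big, in which case a separate Fujita-type estimate (Theorem~\ref{numerical Fujita type}) gives $\sum m_i\leq n+1$ directly, or some $M_i$ is nef and not big and its Iitaka map is already a fibration $X\to Y_{G_1}$ on $X$ itself. All Cartier data are preserved, the vertical $M_i$ descend to Cartier $N_i$ on the base via Lemma~\ref{lemma.pullback}, and one applies the inductive hypothesis on both the base and the general fibre. To make the two pieces add up, the paper proves the sharper inequality $\sum m_i\leq \dim X+\sigma(M_1,\dots,M_k)$, where $\sigma$ is the dimension of the span of the $M_i$ in $N^1(X)$; this splits additively along the fibration (their equation~\eqref{eqn.step4}), whereas $\rho(X)$ does not in general.

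A secondary issue: your bound $-K_X\cdot C\leq \dim F+1$ invokes Kobayashi--Ochiai on $F$, but $F$ is only a klt Fano, and neither the length bound nor the index bound gives $\dim F+1$ in that generality. The paper sidesteps this by choosing the face $G_1$ of $\mathrm{Nef}(X)$ maximal among fibre-type faces containing $M_1$; this forces the horizontal $M_i|_{F_1}$ to span a one-dimensional subspace and reduces the fibre contribution to the $\sigma=1$ case, which is handled by Theorem~\ref{Fano-case}. Finally, the equality case is indeed the hardest part; the paper carries it out via the canonical bundle formula and an isotriviality argument producing a crepant birational map to $\mathbb{P}^\nu\times Y_{G_1}$, which is then shown to be an isomorphism because the target admits no small modifications.
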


In \cite{g-mukai}, the third author proved that the Mukai-type conjecture is implied by the Kawamata--Ambro effective non-vanishing conjecture (see~\cite{Amb99b}).
Moreover, the third and fourth authors proved Conjecture \ref{Mukai type conjecture} in dimension $2$ in \cite{gm}.
In this paper, we prove Conjecture \ref{Mukai type conjecture}. This leads to a divisorial characterization of products of projective spaces. 

As a corollary, we deduce that Conjecture~\ref{Mukai type conjecture} is also valid for klt Fano varieties. 
More precisely, we obtain:

\begin{corollary}\label{cor:mukai-type-klt-fano}
Let $X$ be a klt Fano variety. 
Then $c_X\geq 0$. 
Furthermore, if $c_X=0$, then $X$ is isomorphic to a product of projective spaces.
\end{corollary}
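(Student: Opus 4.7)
The plan is to deduce Corollary \ref{cor:mukai-type-klt-fano} directly from Theorem \ref{mukai type gklt} by specializing to $\Delta=0$ and exploiting the fact that every klt Fano variety is a Mori dream space.

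First, for any decomposition $-K_X\equiv\sum_{i=1}^{k}m_iM_i$ with $m_i\in\mathbb{Q}_{>0}$ and $M_i$ nef Cartier on $X$ with $M_i\not\equiv 0$, as in the definition of $\tau_X$, I would verify the three hypotheses of Theorem \ref{mukai type gklt} taken with $\Delta=0$ and nef part $\sum m_i M_i$. Hypothesis (2) is built into the choice of decomposition; hypothesis (3) follows since $\sum m_iM_i\equiv -K_X$ is big as $X$ is Fano. For hypothesis (1), since every $M_i$ is already nef on $X$ itself, the identity is an admissible model for the nef part, so the generalized discrepancies of $(X,\sum m_iM_i)$ coincide with the ordinary discrepancies of $(X,0)$. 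Hence the generalized pair is generalized klt precisely because $X$ is klt. Theorem \ref{mukai type gklt} then yields $\sum_i m_i\le\dim X+\rho(X)$. Passing to the supremum over all such decompositions gives $\tau_X\le\dim X+\rho(X)$, i.e.\ $c_X\ge 0$.

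For the equality case, suppose $c_X=0$, so $\tau_X=\dim X+\rho(X)$. Since $X$ is a klt Fano variety, by \cite{bchm} it is a Mori dream space, hence its nef cone is rational polyhedral and generated by finitely many primitive nef Cartier classes sitting on its extremal rays. Any candidate decomposition of $-K_X$ can then be refined along these primitive generators without decreasing the sum of the coefficients (replacing any non-primitive summand $a M$ with $M=\sum c_j L_j$ by the finer combination $\sum a c_j L_j$ only increases the total). Consequently the supremum in the definition of $\tau_X$ is in fact a maximum, attained by some decomposition with $\sum_i m_i=\dim X+\rho(X)$. Applying the equality case of Theorem \ref{mukai type gklt} to this distinguished decomposition produces $X\simeq\prod_{j=1}^{l}\mathbb{P}^{t_j}$.

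No genuinely new obstacle should appear beyond what is already handled inside Theorem \ref{mukai type gklt}; the whole derivation is a formal reduction. The only subtle point is confirming the attainment of the supremum defining $\tau_X$ in the Mori dream space setting, but this is purely a polyhedral observation about the nef cone and its integral generators, and it is precisely what allows the rigidity conclusion of Theorem \ref{mukai type gklt} to be invoked in the equality case rather than applied only in the limit.
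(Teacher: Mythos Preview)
Your approach is correct and is essentially what the paper intends (the paper does not spell out a proof of this corollary, treating it as immediate from Theorem~\ref{mukai type gklt} together with the remark that $\tau_X$ is attained when $X$ is a Mori dream space).

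There is, however, one genuine slip in your argument for attainment of the supremum. You claim that refining a summand $aM$ along the primitive integral generators $N_j$ of the extremal rays of $\mathrm{Nef}(X)$ can only increase the coefficient sum; equivalently, that if $M=\sum_j c_jN_j$ with $c_j\ge 0$, then $\sum_j c_j\ge 1$ for every nonzero nef Cartier class $M$. This fails in general: already in a rank-two lattice with primitive ray generators $N_1=(3,1)$ and $N_2=(1,3)$, the integral class $M=(1,1)$ lies in the cone and satisfies $M=\tfrac{1}{4}N_1+\tfrac{1}{4}N_2$, so $\sum_j c_j=\tfrac{1}{2}<1$. Refining the term $aM$ in this way would \emph{halve} its contribution.

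The fix is to refine instead along the Hilbert basis $\{G_1,\dots,G_s\}$ of the finitely generated monoid $\mathrm{Nef}(X)\cap N^1(X)_{\mathbb{Z}}$ (Gordan's lemma). Every nonzero nef Cartier class $M$ is then $M=\sum_k n_kG_k$ with $n_k\in\mathbb{Z}_{\ge 0}$ and $\sum_k n_k\ge 1$, so the replacement $aM\mapsto\sum_k(an_k)G_k$ does not decrease the total. One is then maximizing the linear function $\sum_k b_k$ over the bounded polytope $\{(b_k)\in\mathbb{R}_{\ge 0}^s:\sum_k b_kG_k\equiv -K_X\}$ (boundedness follows by intersecting with $(-K_X)^{\dim X-1}$), and the maximum is attained at a rational vertex. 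With this correction your reduction to the equality case of Theorem~\ref{mukai type gklt} goes through unchanged.
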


Occhetta \cite{occ} proved a characterization of products of projective spaces using unsplit covering families of rational curves.
Building on Occhetta's result, Araujo 
\cite[Theorem~1.3]{Ara06} obtained another characterization based on the geometry of minimal rational curves and their tangent directions. Furthermore, Fujita 
\cite[Theorem~2.11]{Fuji14c} obtained a criterion for varieties being isomorphic to the products of projective spaces in terms of length of extremal rays via Araujo's result.
In contrast, Corollary~\ref{cor:mukai-type-klt-fano} of this paper provides a new characterization formulated purely in terms of a numerical decomposition of the anticanonical divisor. In particular, Fujita's theorem 
\cite[Theorem~1.3]{Fuj22} 
on Fano manifolds carrying many free divisors appears as a special case of our corollary.

Theorem~\ref{mukai type gklt} provides the upper bound
$\dim X + \rho(X)$
on the total index for generalized klt log Calabi--Yau pairs. 
However, it is also possible to find an upper bound which only depends on the dimension of the underlying variety.

\begin{theorem}
\label{introthm:charct-p1n}
Let $X$ be a klt Fano variety.
Assume that there exists a decomposition 
$-K_X\equiv \sum_{i=1}^k b_i M_i$ 
such that for all 
$i=1, \dots, k$, 
$b_i>0$,
and 
$M_i$ is a nef Cartier divisor on 
$X$, 
$M_i \not \equiv 0$.
Then, 
\[
\sum_{i=1}^k b_i\leq 2\dim X.\] 
Moreover, if equality holds, then
$X\simeq (\mathbb{P}^1)^{\dim X}$.
\end{theorem}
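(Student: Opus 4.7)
We prove the theorem by induction on $n := \dim X$. The base case $n=1$ is immediate: the only klt Fano curve is $\mathbb{P}^1$, and $-K_{\mathbb{P}^1} = 2H$ forces $\sum b_i \le 2 = 2\dim X$, with equality only when every $M_i$ equals $H$. For the inductive step, the case $\rho(X) \le n$ is dispatched directly by Corollary~\ref{cor:mukai-type-klt-fano}, which gives $\sum b_i \le \dim X + \rho(X) \le 2n$. Equality then forces $\rho(X)=n$ together with equality in the generalized Mukai-type bound, so by Theorem~\ref{mukai type gklt} the variety $X$ is a product of projective spaces whose number of factors equals its dimension, that is $X \simeq (\mathbb{P}^1)^n$. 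The substantive case is therefore $\rho(X) > n$.

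In that regime, after a small $\mathbb{Q}$-factorialization if necessary, we may assume $X$ is $\mathbb{Q}$-factorial klt, hence a Mori dream space, and MMP produces an extremal Mori fiber space $\pi\colon X \to Y$ with $\dim Y < n$. Let $F$ be a general fiber, a klt Fano of dimension $\dim F \ge 1$. Split the indices into the \emph{horizontal} set $H = \{i : M_i|_F \not\equiv 0\}$ and the \emph{vertical} set $V = \{i : M_i|_F \equiv 0\}$. Restriction to $F$ yields $-K_F = \sum_{i\in H} b_i (M_i|_F)$, a decomposition of $-K_F$ into nef Cartier nonzero summands, so the inductive hypothesis on $F$ gives $\sum_{i \in H} b_i \le 2\dim F$. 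For the vertical summands, the extremality of $\pi$ (relative Picard rank one) forces $M_i \equiv \pi^* N_i$ for some nef $\mathbb{Q}$-Cartier $N_i$ on $Y$. Via the canonical bundle formula in generalized-pair form, $Y$ inherits a generalized klt Fano structure $(Y, B_Y, M_Y)$, and the vertical $N_i$'s, together with $B_Y$ and $M_Y$, form a decomposition of $-(K_Y+B_Y+M_Y)$ into nef pieces. A generalized-pair version of the inductive hypothesis, set up to accommodate $\mathbb{Q}$-Cartier nef summands, then yields $\sum_{i \in V} b_i \le 2 \dim Y$. Adding the two bounds gives $\sum b_i \le 2\dim F + 2\dim Y = 2n$.

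In the equality case $\sum b_i = 2n$, both bounds above are saturated, so by induction $F \simeq (\mathbb{P}^1)^{\dim F}$ and $Y \simeq (\mathbb{P}^1)^{\dim Y}$ (with trivial generalized boundary on $Y$). Since $\pi$ has relative Picard rank one, $\rho(F) = 1$, forcing $\dim F = 1$ and $F \simeq \mathbb{P}^1$; thus $\pi$ is a $\mathbb{P}^1$-bundle over $(\mathbb{P}^1)^{n-1}$. Matching the horizontal piece $-K_F = 2 H_F$ on the fiber with the vertical piece, which realizes the standard decomposition $\sum_{j} 2 H_j$ of $-K_{(\mathbb{P}^1)^{n-1}}$ pulled back from the base, rigidifies this bundle: the twisting line bundle must be trivial, and we conclude $X \simeq (\mathbb{P}^1)^n$.

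The principal obstacle is the descent step to the base: the divisors $N_i$ are in general only nef $\mathbb{Q}$-Cartier (not Cartier), and $Y$ carries only a generalized klt Fano structure, so the inductive hypothesis must be formulated in the generalized-pair framework of Theorem~\ref{mukai type gklt} (with an appropriate integrality or index-one-cover workaround). A secondary difficulty is extracting triviality of the $\mathbb{P}^1$-bundle in the equality case from the rigidity of the decomposition, since \emph{a priori} any $\mathbb{P}^1$-bundle over $(\mathbb{P}^1)^{n-1}$ qualifies.
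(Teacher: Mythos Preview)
Your proposal takes a different and unnecessarily complicated route, and the part of it that is genuinely new is incomplete.

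The paper's proof is a single sentence: apply Theorem~\ref{gen.mukai.thm} directly to the generalized klt pair $(X,\sum_i b_i M_i)$. That theorem already gives the bound
\[
\sum_i b_i \;\le\; \min\bigl(\dim X + \sigma(M_1,\dots,M_k),\; 2\dim X\bigr),
\]
so in particular $\sum_i b_i \le 2\dim X$ with no case analysis on $\rho(X)$. If $\sum_i b_i = 2\dim X$, then equality holds in \eqref{gen.mukai.thm.eqn}, condition~\ref{gen.mukai.thm.2nd.ineq} forces $\dim X + \sigma \le 2\dim X$, hence both sides of the $\min$ equal $2\dim X$, and Remark~\ref{rem.eq.gen.mukai.thm} gives $X\simeq (\mathbb{P}^1)^{\dim X}$.

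You are invoking only the weaker introduction-version bound $\dim X + \rho(X)$ (Theorem~\ref{mukai type gklt}/Corollary~\ref{cor:mukai-type-klt-fano}), which forces you into the case split on $\rho(X)\le n$ versus $\rho(X)>n$. The latter case then needs a separate fiber/base induction with the two gaps you already flag: the descended divisors $N_i$ on $Y$ need not be Cartier, and the triviality of the $\mathbb{P}^1$-bundle in the equality case is not established. There is also an unflagged problem: the sentence ``MMP produces an extremal Mori fiber space $\pi\colon X \to Y$'' is not correct as written, since running a $K_X$-MMP alters $X$ through divisorial contractions and flips, after which the $M_i$ need not remain nef or Cartier; you cannot assume the Mori fiber space lives on $X$ itself. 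The paper's proof of Theorem~\ref{gen.mukai.thm} avoids this entirely by taking the Iitaka fibration of a nef-but-not-big $M_i$ on $X$ (available once Theorem~\ref{Fano-case} disposes of the all-big case), never leaving $X$.

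In short: the $2\dim X$ bound is already built into Theorem~\ref{gen.mukai.thm}, so your detour is unnecessary, and the portion of it beyond what the paper already proves has real gaps.
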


In the proof of Theorems~\ref{mukai type gklt} and~\ref{introthm:charct-p1n}, an important step is represented by the following Fujita-type theorem that provides an upper bound for the pseudo-effective threshold of the canonical class with respect to nef and big Cartier divisors.

\begin{theorem}[{cf.~Theorem \ref{numerical Fujita type}}]\label{intro-fujita type-1}
Let $X$ be an $n$-dimensional normal projective variety and $L_1, \cdots, L_{n+1}$ be nef and big Cartier divisors on $X$. 
Then $K_X +L_1+\cdots+L_{n+1}$ is pseudo-effective. 
\end{theorem}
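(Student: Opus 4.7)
The plan is induction on $n = \dim X$, using Kawamata--Viehweg vanishing, reduction to the smooth case via resolution, and restriction to a hyperplane section via adjunction. For the base case $n = 1$, on a smooth projective curve of genus $g$, each nef and big Cartier $L_i$ has $\deg L_i \ge 1$, so $\deg(K_X + L_1 + L_2) \ge 2g - 2 + 2 = 2g \ge 0$, giving pseudo-effectivity. The reduction to smooth is standard: letting $\pi : Y \to X$ be a resolution, the pullbacks $\pi^* L_i$ remain nef and big Cartier on $Y$, and since $\pi_*$ preserves pseudo-effectivity, it suffices to work on $Y$; so we may assume $X$ is smooth of dimension $n \ge 2$.

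For the inductive step, since $L_{n+1}$ is nef and big, after blowing up the base locus of $|k L_{n+1}|$ on a birational model $\mu : \tilde X \to X$ for some $k \gg 0$, one writes $\mu^*(k L_{n+1}) \sim M + F$ with $M$ the basepoint-free mobile part and $F$ the fixed effective part. Pick a smooth irreducible $S \in |M|$ via Bertini. Adjunction $K_S = (K_{\tilde X} + S)|_S$ combined with $S \sim \mu^*(k L_{n+1}) - F$ identifies $(K_{\tilde X} + \sum_i \mu^* L_i)|_S$, modulo effective corrections (from $F|_S$ and the coefficient $k - 1$), with $K_S + (\mu^* L_1)|_S + \cdots + (\mu^* L_n)|_S$. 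The inductive hypothesis on the smooth $(n-1)$-fold $S$ then yields pseudo-effectivity of the restriction. This propagates back to $\tilde X$ via Kawamata--Viehweg vanishing applied to $\mathcal{O}_{\tilde X}(m D - S)$ for $m \gg 0$ (where $D = K_{\tilde X} + \sum \mu^* L_i$), with $m D - S - K_{\tilde X}$ arranged to be nef and big using a Kodaira decomposition of $\mu^* L_{n+1}$ to dominate the potentially negative contribution of $(m-1) K_{\tilde X}$, giving a surjection $H^0(\tilde X, m D) \twoheadrightarrow H^0(S, m D|_S)$; pushing forward by $\mu_*$ recovers pseudo-effectivity on $X$.

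The main obstacle is that the restrictions $(\mu^* L_i)|_S$ for $i \le n$ must remain big on $S$ for the inductive hypothesis to apply. Nef-ness is preserved automatically, but bigness can fail when $S$ meets the augmented base locus $\mathbf{B}_+(L_i)$ non-transversally. Overcoming this typically requires a Bertini--Kollár argument for movable linear systems on a further blow-up, or a Nakamaye-type analysis of the augmented base loci to ensure $S$ can be chosen generic enough; a second, milder technical point is balancing the various positivities when arranging $m D - S - K_{\tilde X}$ to be nef and big, which is why one must work with a sufficiently large multiple $m$ rather than $m = 1$.
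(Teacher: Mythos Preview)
Your approach has a genuine gap that is not merely a missing detail. The core lifting step---``pseudo-effectivity of $D|_S$ propagates back to $\tilde X$ via a surjection $H^0(\tilde X, mD)\twoheadrightarrow H^0(S, mD|_S)$''---does not work, because pseudo-effective does not imply that any multiple has a section. The inductive hypothesis only tells you $(K_S+\sum_{i\le n}(\mu^*L_i)|_S)$ lies in the closure of the effective cone; there is no reason $h^0(S, mD|_S)>0$ for any $m$, so a surjection onto that space proves nothing. Separately, your adjunction bookkeeping is off: with $S\sim k\mu^*L_{n+1}-F$ one has
\[
(K_{\tilde X}+\textstyle\sum_{i=1}^{n+1}\mu^*L_i)|_S
= K_S+\textstyle\sum_{i=1}^{n}(\mu^*L_i)|_S + \bigl((1-k)\mu^*L_{n+1}+F\bigr)|_S,
\]
and the correction $(1-k)\mu^*L_{n+1}|_S$ is very negative for $k\gg 0$, not effective. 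Finally, arranging $mD-S-K_{\tilde X}$ nef and big requires absorbing $(m-1)K_{\tilde X}$, about which you have no control.

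The paper's argument avoids induction on dimension entirely. It first treats a \emph{single} nef and big Cartier divisor $L$: after passing to a resolution, $P(t)=\chi(K_X+tL)$ is a degree-$n$ polynomial with positive leading coefficient, and by Kawamata--Viehweg vanishing $P(t)=h^0(K_X+tL)$ for every positive integer $t$; hence $P$ cannot vanish at all of $1,\dots,n+1$, so $K_X+mL$ is effective for some $m\le n+1$, whence $K_X+(n+1)L$ is pseudo-effective. The multi-divisor statement then follows from the one-line identity
\[
\textstyle\sum_{i=1}^{n+1}\bigl(K_X+(n+1)L_i\bigr)=(n+1)\bigl(K_X+\sum_{i=1}^{n+1}L_i\bigr),
\]
since a nonnegative combination of pseudo-effective classes is pseudo-effective. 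This bypasses restriction, lifting, and any bigness-of-restriction issues altogether.
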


The above theorem follows immediately from Mori's cone theorem and the estimate in the length of extremal rays when all 
$L_i$
are assumed to be ample and 
$X$ 
is log canonical, cf. \cite{length.extr.ray}:
indeed, under such assumption,
$K_X +L_1+\cdots+L_{n+1}$ 
is even nef rather than pseudo-effective. 

By combining Theorem \ref{intro-fujita type-1} with the Kobayashi--Ochiai theorem, we obtain the following characterization of projective space:

\begin{theorem}[{cf.~Theorem \ref{Fano-case}}]\label{intro-fujita type}
Let $X$ be a klt Fano variety of dimension $n$.
Assume that there exists a decomposition 
$-K_X=\sum_{i=1}^k a_i M_i$ 
such that for all 
$i=1, \dots, k$, 
$a_i \in \mathbb{R}_{>0}$,
and $M_i$ is a nef and big Cartier divisor.
Then
$\sum_i a_i\leq n+1$, 
and if equality holds, then 
$X$ is isomorphic to the projective space $\mathbb P ^n$ and 
$\mathcal O_X(M_i)$ 
is isomorphic to 
$\mathcal O_{\mathbb P^n}(1)$.
\end{theorem}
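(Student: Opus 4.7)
The plan is to combine Theorem~\ref{intro-fujita type-1} with the Kobayashi--Ochiai theorem, after first extending Theorem~\ref{intro-fujita type-1} to $\mathbb{R}$-coefficients. The $\mathbb{R}$-upgrade I need states: for any $c_1,\dots,c_k\in\mathbb{R}_{\geq 0}$ with $\sum_i c_i=n+1$, the class $K_X+\sum_i c_iM_i$ is pseudo-effective. In the rational case $c_i=p_i/d$ with $\sum_i p_i=d(n+1)$, I would partition the multiset containing $p_i$ copies of each $M_i$ into $d$ groups of $n+1$, apply Theorem~\ref{intro-fujita type-1} within each group, and sum, obtaining $dK_X+\sum_i p_iM_i\in\overline{\operatorname{Eff}}(X)$; dividing by $d$ settles the rational case, and closedness of $\overline{\operatorname{Eff}}(X)$ then gives the real case.

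For the upper bound, I would argue by contradiction: if $t:=\sum_i a_i>n+1$, set $c_i:=(n+1)a_i/t$, so $\sum_i c_i=n+1$. Using $-K_X=\sum_i a_iM_i$, a direct computation gives
\[
K_X+\sum_i c_iM_i=\left(1-\tfrac{n+1}{t}\right)K_X,
\]
a strictly positive multiple of $K_X$. By the $\mathbb{R}$-version above, this class is pseudo-effective, so $K_X\in\overline{\operatorname{Eff}}(X)$. But $-K_X$ is ample since $X$ is Fano, contradicting the pointedness of the pseudo-effective cone.

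For the equality case $\sum_i a_i=n+1$, applying Theorem~\ref{intro-fujita type-1} with $L_1=\cdots=L_{n+1}=M_i$ yields $M_i+\tfrac{K_X}{n+1}\in\overline{\operatorname{Eff}}(X)$ for every $i$. Taking the positive linear combination with weights $a_i>0$,
\[
\sum_i a_i\left(M_i+\tfrac{K_X}{n+1}\right)=-K_X+K_X=0,
\]
so a positive combination of pseudo-effective classes vanishes. Pointedness of $\overline{\operatorname{Eff}}(X)$ forces each summand to be numerically trivial, whence $(n+1)M_i\equiv -K_X$ for every $i$. Setting $H:=M_1$, we obtain a nef Cartier divisor $H$ with $(n+1)H\equiv -K_X$; ampleness of $-K_X$ then forces $H$ to be ample. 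The Kobayashi--Ochiai theorem (\cite{kooc} and its extension to klt Fano varieties) gives $X\cong\mathbb{P}^n$ and $H=\mathcal{O}_{\mathbb{P}^n}(1)$. Since each $M_i\equiv H$ on $\mathbb{P}^n$ and $\operatorname{Pic}(\mathbb{P}^n)=\mathbb{Z}\cdot\mathcal{O}(1)$, we conclude $\mathcal{O}_X(M_i)\cong\mathcal{O}_{\mathbb{P}^n}(1)$ for every $i$.

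The main technical obstacle I foresee is establishing the $\mathbb{R}$-coefficient upgrade of Theorem~\ref{intro-fujita type-1}; once it is in place, both the bound and the equality statement reduce to pointedness of the pseudo-effective cone combined with a direct appeal to Kobayashi--Ochiai.
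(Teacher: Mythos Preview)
Your proposal is correct and follows essentially the same route as the paper. Both arguments hinge on the identity
\[
\sum_{i} a_i\bigl(K_X+(n+1)M_i\bigr)=(n+1)\Bigl(K_X+\sum_i a_iM_i\Bigr),
\]
combined with the pseudo-effectivity of each $K_X+(n+1)M_i$ (Proposition~\ref{weak numerical Fujita type}), to force $(n+1)M_i\equiv -K_X$ in the equality case and to bound $\sum_i a_i$ from above. Two minor remarks: your ``$\mathbb{R}$-upgrade'' via partitioning into groups of $n+1$ is unnecessary, since the displayed identity already gives the real-coefficient statement in one line (this is exactly how the paper proves Theorem~\ref{numerical Fujita type}); and where you invoke a singular Kobayashi--Ochiai as a black box, the paper instead computes the Hilbert polynomial $\chi(X,-lM_1)$ directly via Kawamata--Viehweg vanishing and then applies \cite[Theorem~1.1]{kooc}. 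The paper's version of the theorem (Theorem~\ref{Fano-case}) is also stated for arbitrary normal projective $X$ rather than klt Fano $X$, so it includes an extra step deriving canonical singularities from the hypotheses; in your klt setting this is of course already given.
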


The above theorem is Theorem \ref{mukai type gklt} in the case when every $M_i$ is big.
So, once Theorem~\ref{intro-fujita type} is established,
we can assume that some $M_i$ is not big in the statement of Theorem~\ref{mukai type gklt}, thus inducing a fibration for the variety $X$. 
This gives a natural set up to proceed by induction on the dimension. The details of the inductive argument
for the proof of Theorem~\ref{mukai type gklt}
are given in Section~\ref{induction:proof}.

\medskip

Once we understand the varieties of nef complexity zero, it is natural to try to classify varieties of
nef complexity $<1$.
We expect that $c_X<1$ implies $c_X=0$, however, 
our proof does not give this statement.
With our current techniques, we can prove this
statement for smooth Fano threefolds, and we further classify
smooth Fano threefolds of nef complexity one.

\begin{theorem}
[{cf. Theorem~\ref{thm:smooth-fano-3fold-cx=1}}]
\label{introthm:3-fold-case}
Let $X$ be a smooth Fano threefold. 
If $c_X<1$, then $c_X=0$, and $X$ is either
$\mathbb{P}^3, \mathbb{P}^2\times\mathbb{P}^1,$ or 
$(\mathbb{P}^1)^3$. 
If $c_X=1$, then $X$ is isomorphic to one of the following:
\begin{itemize}
    \item a smooth quadric threefold ${Q}^3$;
    \item the projectivization of the tangent bundle of $\mathbb{P}^2$; 
    \item the blow-up of $\mathbb{P}^3$ at a point;
    \item the blow-up of $\mathbb{P}^3$ along a line; or 
    \item the product $\mathbb{P}^1\times {\rm Bl}_p(\mathbb{P}^2)$.
\end{itemize}
\end{theorem}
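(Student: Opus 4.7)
\emph{Proof plan.}
The plan is to induct on the Picard rank $\rho(X)$, combining the numerical bounds from Theorems~\ref{intro-fujita type} and~\ref{mukai type gklt} with the Mori--Mukai classification of smooth Fano threefolds. When $\rho(X)=1$, the nef cone is a single ray generated by the ample primitive Cartier divisor $H$, so every nef Cartier divisor on $X$ is of the form $aH$ with $a\in\mathbb Z_{\ge 0}$; writing $-K_X\sim i(X)\cdot H$ yields $\tau_X=i(X)$ and hence $c_X=4-i(X)$. By Kobayashi--Ochiai, $c_X<1$ forces $i(X)=4$ and $X\cong\mathbb P^3$, while $c_X=1$ forces $i(X)=3$ and $X\cong Q^3$.

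For $\rho(X)\ge 2$ and $c_X\le 1$ we have $\tau_X\ge\rho(X)+2\ge 4$. Since $X$ is a Mori dream space, the supremum defining $\tau_X$ is attained by an optimal decomposition $-K_X\equiv\sum a_iM_i$ in which each $M_i$ may be taken to be a primitive generator of an extremal ray of $\operatorname{Nef}(X)$ (decomposing any $M_i$ that lies in the interior of the nef cone strictly increases $\sum a_i$). Theorem~\ref{intro-fujita type} then shows that if every $M_i$ were big we would have $\sum a_i\le 4$ with equality characterizing $\mathbb P^3$; combined with $\rho(X)\ge 2$ this forces at least one $M_i$ to be non-big. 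By the basepoint-free theorem---applicable since $M_i-\varepsilon K_X$ is ample for every $\varepsilon>0$---such an $M_i$ is semiample, and hence induces a contraction of fiber type $f_i\colon X\to Y$ with $\dim Y\in\{1,2\}$, giving $X$ a non-trivial fibration structure.

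Next I would establish an additivity formula $c_{X_1\times X_2}=c_{X_1}+c_{X_2}$---immediate once one observes that every nef Cartier divisor on a product splits as a sum of pullbacks from the factors, and that splitting such a mixed divisor strictly increases $\sum a_i$ whenever both factor components are non-trivial---and use it together with the fibration above, the Mori--Mukai classification, and the two-dimensional instance of the theorem proved in \cite{gm}, to run the case analysis. For $\rho(X)=2$, matching the fibration $f_i$ and the remaining extremal contraction against the rank-two Fano threefolds satisfying $\tau_X\ge 4$ singles out exactly $\mathbb P^2\times\mathbb P^1$, $\mathbb P(T_{\mathbb P^2})$, $\operatorname{Bl}_{pt}\mathbb P^3$, and $\operatorname{Bl}_{\ell}\mathbb P^3$. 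For $\rho(X)=3$, additivity together with the rank-one and rank-two results yields $(\mathbb P^1)^3$ and $\mathbb P^1\times\operatorname{Bl}_p\mathbb P^2$, while non-product rank-three Fanos are excluded via inspection of their extremal contractions. For $\rho(X)\ge 4$, one seeks either a $\mathbb P^1$-factor that can be peeled off by additivity, reducing to a del Pezzo base of Picard rank $\ge 3$ (which has $c\ge 2$ by \cite{gm}), or a divisorial contraction to which one applies the uniform bound of Theorem~\ref{mukai type gklt} in smaller dimension or Picard rank.

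The main obstacle I anticipate is the uniform treatment of Picard rank $\ge 4$: without product structure it is not obvious how to avoid a case-by-case walk through the Mori--Mukai list, and one would like a structural statement ensuring that every smooth Fano threefold of Picard rank $\ge 4$ admits either a $\mathbb P^1$-factor or an extremal contraction decreasing $c_X$ by at least one. A secondary subtlety is the dichotomy $c_X<1\Rightarrow c_X=0$, which a priori requires controlling the denominators of an optimal decomposition; however, once the finite list of candidate families achieving $c_X\le 1$ is pinned down, an explicit computation of $\tau_X$ on each one shows $c_X\in\{0,1\}$, so the dichotomy follows from the classification itself.
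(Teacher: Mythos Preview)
Your outline for $\rho(X)=1$ is correct and matches the paper. For $\rho(X)\ge 2$, however, your strategy diverges substantially from the paper's and, as you yourself note, leaves a real gap for $\rho(X)\ge 3$.

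The paper's key device---which replaces your proposed walk through Mori--Mukai---is the elementary inequality
\[
\tau_X \le \sum_{R\in B} \ell(R)
\]
for any set $B$ of $\rho(X)$ linearly independent extremal rays (Proposition~\ref{prop:tau-length:ineq}). Combined with Fujita's theorem \cite[Theorem~1.4(1)]{Fuji14c} that a smooth Fano threefold other than $(\mathbb P^1)^3$ has at most two extremal rays of length $\ge 2$, and the hypothesis $\tau_X\ge \rho(X)+2$, this forces equality everywhere: $X$ has \emph{exactly two} extremal rays of length $2$ and all others of length $1$, regardless of $\rho(X)$. The rest of the proof is then a short structural analysis of what the two length-$2$ contractions can be (types E2, C2, D2 in Mori's list), using a blow-up comparison $c_{\operatorname{Bl}_pY}\ge c_Y+1$ (Proposition~\ref{prop:blowup-fano}) to rule out two E2's, and Kanemitsu's diagram \cite{Kane17} plus \cite{OSWW} for the two-$\mathbb P^1$-bundle case. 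The Picard-rank bound $\rho(X)\le 3$ falls out of this structure theory rather than being imposed in advance.

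Your approach could in principle succeed, but as written it is incomplete. The additivity $c_{X_1\times X_2}=c_{X_1}+c_{X_2}$ only handles products, and ``inspection of extremal contractions'' for non-product rank-$3$ Fanos, or the hoped-for $\mathbb P^1$-peeling in rank $\ge 4$, is exactly where the work lies---there are many rank-$\ge 3$ Fano threefolds that are not products and have no $\mathbb P^1$-factor. Without the length bound above (or something equivalent), you would be forced into the case-by-case Mori--Mukai check you were trying to avoid. A minor additional point: your claim that moving each $M_i$ to a primitive extremal generator of $\operatorname{Nef}(X)$ can only increase $\sum a_i$ needs justification; it is true here because the primitive nef generators detect length-$1$ curves, but this is essentially the content of Proposition~\ref{prop:tau-length:ineq} again.
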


Theorem~\ref{mukai type gklt} and Theorem~\ref{introthm:charct-p1n} are both related to the combinatorics of reflexive polytopes. 
Indeed, the total index of a toric variety is related to nef partitions of polytopes (see, e.g.,~\cite{Bat94,Li16}). 
A nef partition of a polytope $P$ is a decomposition such as the Minkowski sum of integral polytopes (see, e.g.,~\cite[Definition 2]{ANSW09}).
In~\cite{Bat08}, Batyrev proved the following proposition: 

\begin{proposition}\label{prop:cube}
Let $P \subset \mathbb{Q}^n$ be an integral reflexive polytope. Write 
\[
P = P_1 \oplus \dots \oplus P_r
\]
where each $P_i$ is a lattice polytope containing the origin and $\oplus$ denotes the Minkowski sum. Assume no $P_i$ is equal to the origin. Then, we have $r\leq 2n$. Furthermore, the equality holds if and only if $P$ is an $n$-dimensional crosspolytope, i.e., the dual of an $n$-dimensional cube.
\end{proposition}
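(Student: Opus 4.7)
The plan is to reduce the statement to a question about nef decompositions of the anticanonical class on the toric Fano variety associated with $P$, and then to invoke Theorem~\ref{introthm:charct-p1n}.

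First, associated to the reflexive lattice polytope $P$ is the Gorenstein (hence klt) toric Fano variety $X := X_P$. The standard toric dictionary identifies lattice polytopes $Q$ containing the origin with torus-invariant nef Cartier divisors on $X$ (via $Q$ being the moment polytope of the associated divisor), and Minkowski sum of polytopes corresponds to sum of divisors in $\mathrm{Pic}(X)$. Under this identification a Minkowski sum decomposition $P = P_1 \oplus \cdots \oplus P_r$ into non-trivial lattice polytopes containing the origin corresponds to a numerical decomposition
\[ -K_X \equiv L_1 + \cdots + L_r, \]
where each $L_i$ is a nef Cartier divisor with $L_i \not\equiv 0$.

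Now apply Theorem~\ref{introthm:charct-p1n} to the klt Fano variety $X$ with this decomposition and with coefficients $b_i = 1$ for every $i$. The theorem yields
\[ r \;=\; \sum_{i=1}^{r} b_i \;\leq\; 2 \dim X \;=\; 2n, \]
which establishes the bound.

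If equality $r = 2n$ holds, then the sharp case of Theorem~\ref{introthm:charct-p1n} forces $X_P \cong (\mathbb{P}^1)^n$. Consequently $P$ is the reflexive polytope associated with $(\mathbb{P}^1)^n$, namely the $n$-dimensional crosspolytope $\mathrm{conv}(\pm e_1, \ldots, \pm e_n)$, which is precisely the polar dual of the $n$-cube. The main obstacle is the careful bookkeeping of the toric dictionary --- verifying that a Minkowski decomposition of $P$ yields the precise decomposition of $-K_X$ as a sum of nef Cartier divisors, and identifying the reflexive polytope attached to $(\mathbb{P}^1)^n$ under the chosen convention --- but once this correspondence is in place, both the inequality and the equality characterization are immediate consequences of Theorem~\ref{introthm:charct-p1n}.
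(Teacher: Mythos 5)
Your overall route is the intended one: the paper does not in fact prove Proposition~\ref{prop:cube} (it is quoted from Batyrev), and both the surrounding discussion and the paper's proof of the closely related Corollary~\ref{introcor:poly} proceed through exactly the toric dictionary you describe, combined with Theorem~\ref{introthm:charct-p1n} (resp.\ Theorem~\ref{gen.mukai.thm}). The inequality part of your argument is sound: for $X=X_P$ the toric variety of the normal fan of $P$, each $P_i$ is a lattice Minkowski summand of $P$, hence its support function is integral linear on the cones of that fan and defines a nef Cartier divisor $M_i$ with $-K_X\sim\sum_i M_i$; since $0\in P_i$ and $P_i\neq\{0\}$, each $P_i$ is positive-dimensional, so $M_i\not\equiv 0$, and Theorem~\ref{introthm:charct-p1n} with $b_i=1$ gives $r\leq 2n$.

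The gap is in the equality case, and it is precisely the point you set aside as ``bookkeeping.'' Under the convention you fixed to run the inequality ($P$ is the polytope of $-K_{X_P}$, i.e.\ the moment polytope), equality forces $X_P\simeq(\mathbb{P}^1)^n$ and hence $P\simeq[-1,1]^n$, the $n$-cube --- consistently with the paper's own remark that the cube is the moment polytope of $(\mathbb{P}^1)^n$. The crosspolytope $\mathrm{conv}(\pm e_1,\dots,\pm e_n)$ is instead the fan polytope of $(\mathbb{P}^1)^n$, the polar dual of what your dictionary produces, and the two are not unimodularly equivalent for $n\geq 2$ (their volumes differ); worse, the crosspolytope admits no Minkowski decomposition of length $2n$ into positive-dimensional lattice summands at all (already for $n=2$ a mixed-volume count caps such decompositions at two summands), whereas the cube does: $[-1,1]^n=\bigoplus_{i=1}^n\bigl([-e_i,0]\oplus[0,e_i]\bigr)$, i.e.\ $-K_{(\mathbb{P}^1)^n}\equiv\sum_i(H_i+H_i)$. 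So your final identification ``$P$ is the crosspolytope'' does not follow from, and in fact contradicts, your own setup; to land on the statement as printed one would have to switch to the face-fan convention, under which the Minkowski-sum/nef-decomposition correspondence used for the inequality is no longer the one you wrote down. Separately, the ``if'' direction of the equality characterization (that the extremal polytope actually attains $r=2n$) is never verified; for the cube it is exactly the explicit length-$2n$ decomposition above, while for the crosspolytope it fails, which shows the duality bookkeeping is not a cosmetic issue but the substance of the equality case.
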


The $n$-dimensional cube is precisely the moment polytope of $(\mathbb{P}^1)^n$. 
Thus, Proposition~\ref{prop:cube} is exactly Theorem~\ref{introthm:charct-p1n} restricted to the setting of Fano toric varieties.
Thus, Theorem~\ref{mukai type gklt} also has a consequence in the realm of reflexive polytopes.
Indeed, we show the following corollary:

\begin{corollary}\label{introcor:poly}
Let $P\subset \mathbb{Q}^n$ be an integral reflexive polytope with $f$ facets. Write 
\[
P= \lambda_1 P_1 \oplus \dots \oplus \lambda_r P_r
\]
where each $P_i$ is a lattice polytope containing the origin, $\lambda_i \geq 0$, and $\oplus$ denotes the Minkowski sum. Assume that no $P_i$ is equal to the origin. Then, we have that $\sum_{i=1}^r \lambda_i \leq f$. Furthermore, the equality holds if and only if $P$ is a product of simplices.
\end{corollary}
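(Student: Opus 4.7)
The plan is to reinterpret the corollary in toric geometry and then apply Theorem \ref{mukai type gklt}. Let $X = X_P$ denote the Gorenstein toric Fano variety whose fan $\Sigma$ is the normal fan of the reflexive polytope $P$. Since $P$ is reflexive, $X$ is a klt Fano of dimension $n$, the rays of $\Sigma$ are generated by the vertices of the dual polytope $P^{\circ}$ (which are in bijection with the facets of $P$), and $X$ carries exactly $f$ torus-invariant prime divisors $D_1, \dots, D_f$ with $-K_X = \sum_{j=1}^{f} D_j$. The standard exact sequence $0 \to M \to \mathbb{Z}^{f} \to \operatorname{Cl}(X) \to 0$ gives the bound $\rho(X) \leq f - n$, with equality precisely when $\Sigma$ is simplicial, equivalently when $X$ is $\mathbb{Q}$-factorial.

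After discarding zero terms, each summand $P_i$ in the Minkowski decomposition is a nontrivial lattice polytope whose normal fan coarsens $\Sigma$, so its support function is linear on every cone of $\Sigma$. Consequently $P_i$ defines an honest nef Cartier divisor $M_i$ on $X$ (not merely $\mathbb{Q}$-Cartier) with $M_i \not\equiv 0$, and the Minkowski identity translates into $-K_X \equiv \sum_i \lambda_i M_i$, a sum which is ample and hence big. To invoke Theorem \ref{mukai type gklt} with $\Delta = 0$, it remains to verify that the generalized pair $(X, \sum_i \lambda_i M_i)$ is generalized klt. On any equivariant toric log resolution $g \colon Y \to X$, combining the defining identity $K_Y + B_Y + M_Y = g^{\ast}(K_X + \sum_i \lambda_i M_i) \equiv 0$ with a direct support-function computation yields $B_Y = \sum_{\alpha}(1 - c_\alpha)\, E_\alpha$, where $E_\alpha$ ranges over the new exceptional divisors of $g$ and $c_\alpha$ is the value of the support function of $-K_X$ at the corresponding primitive generator $v_\alpha$. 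Since $P$ is reflexive, the only interior lattice point of $P^{\circ}$ is the origin, so $c_\alpha \geq 1$ for every nonzero lattice $v_\alpha$; consequently every coefficient of $B_Y$ is $\leq 0 < 1$, confirming generalized klt.

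Theorem \ref{mukai type gklt} now yields $\sum_{i=1}^{r} \lambda_i \leq \dim X + \rho(X) \leq n + (f - n) = f$, establishing the inequality. If equality holds, then both bounds in this chain are equalities: the second forces $\Sigma$ to be simplicial, while the first, combined with the equality clause of Theorem \ref{mukai type gklt}, forces $X \simeq \prod_j \mathbb{P}^{t_j}$, so $P$ is the reflexive polytope of a product of projective spaces, that is, a product of simplices. Conversely, if $P$ is a product of simplices $\prod_j \Delta^{t_j}$, then the standard decomposition $-K_{\prod_j \mathbb{P}^{t_j}} \equiv \sum_j (t_j + 1) H_j$, with $H_j$ the hyperplane class pulled back from the $j$-th factor, realizes $\sum_i \lambda_i = \sum_j (t_j + 1) = f$, showing the bound is sharp. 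The main technical point in this argument is the generalized klt verification of the second paragraph, where the reflexivity of $P$ is used in an essential way.
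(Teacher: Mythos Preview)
Your proof is correct and follows the same strategy as the paper: translate the Minkowski decomposition into a nef decomposition of $-K_{X_P}$ and apply Theorem~\ref{mukai type gklt} (the paper invokes the equivalent Theorem~\ref{gen.mukai.thm}). The only differences are cosmetic: the paper outsources the facts that $X_P$ is klt Fano and that each $P_i$ gives a nef Cartier $M_i$ to \cite{CLS11,Bor93}, whereas you verify the generalized klt condition by hand via the support-function computation; and the paper bounds $\sigma(M_1,\dots,M_r)$ by $\dim\operatorname{Cl}_{\mathbb Q}(X_P)=f-n$ directly, while you go through $\rho(X)\le f-n$ and then note that equality forces $\Sigma$ simplicial before invoking the equality clause.
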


Finally, we prove two more statements that are related to Mukai's Conjecture \ref{mukai conjecture} and to characterizations of products of projective spaces.

Firstly, we consider smooth Fano varieties with many Mori fiber contractions.
More precisely, we consider those smooth Fano varieties whose nef cone coincides with the pseudo-effective cone.
In particular, we show the following theorem, which answers positively a question by J. Starr~\cite{sta}:

\begin{theorem}[{cf.~Theorem~\ref{simpliciality}}]\label{simpliciality-intro}Let $X$ be a smooth Fano variety such that every elementary contraction is of  fiber type. Then the cone $\mathrm{Nef}(X)$ is simplicial.
\end{theorem}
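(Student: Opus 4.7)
The plan is to reduce the simpliciality of $\mathrm{Nef}(X)$ to the linear independence of finitely many prime divisor classes spanning its extremal rays, and then to rule out any non-trivial linear relation among them by producing an auxiliary fibration $X \to \mathbb{P}^1$. Since $X$ is smooth Fano, BCHM implies that $X$ is a Mori dream space, so $\mathrm{Nef}(X)$ is rational polyhedral. I first show $\mathrm{Nef}(X) = \overline{\mathrm{Eff}}(X)$: by hypothesis, each extremal ray $R$ of $\overline{NE}(X)$ is of fiber type, so it contains the class of a covering family of rational curves (for instance, minimal rational curves in general fibers of $\pi_R$); a general member of this family avoids the support of any effective $\mathbb{R}$-divisor approximating a given pseudo-effective divisor $D$, forcing $D \cdot R \geq 0$ for every extremal ray, whence $D$ is nef. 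By Mori dream space theory, the extremal rays of $\mathrm{Nef}(X) = \overline{\mathrm{Eff}}(X)$ are then spanned by the classes of finitely many prime divisors $D_1, \dots, D_s$ with $s \geq \rho(X)$, and simpliciality is equivalent to linear independence of $[D_1], \dots, [D_s]$ in $N^1(X)_{\mathbb{R}}$.

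Suppose, for contradiction, that there is a non-trivial $\mathbb{Q}$-linear relation $\sum c_i [D_i] = 0$. Partitioning the nonzero coefficients into $I = \{i : c_i > 0\}$ and $J = \{j : c_j < 0\}$, set $A = \sum_{i \in I} c_i D_i$ and $B = \sum_{j \in J} (-c_j) D_j$: both are effective, $A \equiv B$, and their supports are disjoint. Since $\mathrm{Pic}(X)$ is torsion-free for smooth Fano varieties, after clearing denominators one has $A \sim B$, and the pencil spanned by the sections vanishing on $A$ and $B$ has empty base locus, defining a morphism $\alpha \colon X \to \mathbb{P}^1$. Its Stein factorization $\beta \colon X \to C$ onto a smooth curve contracts a facet of $\overline{NE}(X)$, so the dual extremal ray of $\mathrm{Nef}(X)$ is generated by the class of a general fiber of $\beta$, which is proportional to $[A]$. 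Hence $[A] = \mu [D_{i_0}]$ for some index $i_0$ and some $\mu > 0$.

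Comparing with $[A] = \sum_{i \in I} c_i [D_i]$ yields a contradiction: if $i_0 \notin I$, then $[D_{i_0}]$ is a positive combination of distinct extremal nef classes, contradicting its extremality in the pointed cone $\mathrm{Nef}(X)$; if $i_0 \in I$, then $(\mu - c_{i_0}) [D_{i_0}] = \sum_{i \in I \setminus \{i_0\}} c_i [D_i]$, and a short case analysis on the sign of $\mu - c_{i_0}$, using that $\mathrm{Nef}(X)$ is pointed and each $[D_i] \neq 0$, again gives a contradiction. The principal technical points I expect to be the hardest are establishing $\mathrm{Nef}(X) = \overline{\mathrm{Eff}}(X)$ via covering families and correctly identifying the extremal nef ray produced by the Stein factorization; the remainder of the argument is combinatorics inside the pointed nef cone.
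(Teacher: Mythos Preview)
Your argument is correct, but it follows a genuinely different route from the paper's. The paper proves the dual statement directly: it shows that the extremal rays of $\overline{\mathrm{NE}}(X)$ are linearly independent in $N_1(X)$. The key input is Proposition~\ref{linear-ind}, which asserts that any collection of distinct fiber-type extremal rays on a smooth Fano variety is linearly independent. That proposition is proved using minimal dominating families of rational curves and the rationally connected fibration machinery (specifically the rc-fibration associated to a subcollection of families, together with~\cite[Corollary~4.4]{aco}); one pulls back an ample divisor from the base of the rc-fibration and computes intersection numbers to rule out any dependence relation among the rays.

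Your approach is the divisorial mirror of this: you work in $N^1(X)$, use the equality $\mathrm{Nef}(X)=\overline{\mathrm{Eff}}(X)$ (which the paper also records, citing~\cite[Lemma~4.4]{druel}), and from a hypothetical linear relation among the prime generators of the extremal nef rays you build a basepoint-free pencil $X\to\mathbb{P}^1$. The Stein factorization then forces the class $[A]$ onto a single extremal nef ray, contradicting its expression as a nontrivial positive combination of distinct extremal generators. Your argument is more elementary in that it avoids rc-fibrations and the results of~\cite{aco}; on the other hand, the paper's Proposition~\ref{linear-ind} is strictly stronger, since it applies to any subcollection of fiber-type rays without assuming that \emph{all} elementary contractions are of fiber type.

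One small simplification at the end: once you know $[A]=\mu[D_{i_0}]$, use that $[A]=[B]$ as well. Since $I$ and $J$ are disjoint, $i_0$ lies outside at least one of them, say $i_0\notin J$; then $\mu[D_{i_0}]=\sum_{j\in J}(-c_j)[D_j]$ expresses the extremal class $[D_{i_0}]$ as a positive combination of distinct extremal generators, an immediate contradiction. This replaces your case analysis on the sign of $\mu-c_{i_0}$.
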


We also prove Conjecture \ref{mukai conjecture} for a smooth Fano variety such that every elementary contraction is of fiber type (Theorem~\ref{many MFS}). 
Finally, we give a new proof of the fact that smooth images of products of projective spaces are again products of projective spaces. 

\begin{theorem}[{cf.~Theorem~\ref{thm:smth-imgs}}]
Let $X$ be a smooth projective variety 
which is the image of a product of projective spaces. Then, $X$ itself is a product of projective spaces.
\end{theorem}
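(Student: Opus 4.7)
The plan is to deduce the statement from Corollary~\ref{cor:mukai-type-klt-fano}: it suffices to exhibit $X$ as a klt Fano variety with nef complexity $c_X=0$. Let $f\colon Y=\prod_{j=1}^l \mathbb{P}^{n_j}\to X$ be the given surjection and write $-K_Y=\sum_{j=1}^l(n_j+1)H_j$, where $H_j$ is the pull-back of the hyperplane class from the $j$-th factor; since $\sum_j(n_j+1)=\dim Y+\rho(Y)$, this decomposition witnesses $c_Y=0$.

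First I would verify that $X$ is Fano. Smoothness is assumed and rational connectedness is inherited from $Y$ via $f$. Ampleness of $-K_X$ then follows from ampleness of $-K_Y$ by comparing the two via the relative canonical bundle formula $K_Y=f^*K_X+K_{Y/X}$ on the smooth locus of $f$, noting that general fibres of $f$ are rationally connected subvarieties of $Y$ so $K_{Y/X}$ contributes controllably. Next, I would establish $c_X=0$ by induction on the number of factors $l$. In the base case $l=1$, Lazarsfeld's theorem immediately gives $X\cong \mathbb{P}^{n_1}$. For $l>1$, consider the projection $\pi\colon Y\to Y'=\prod_{j<l}\mathbb{P}^{n_j}$ with fibres $\mathbb{P}^{n_l}$, and for a general $y\in Y'$ look at the restriction $f_y\colon\mathbb{P}^{n_l}\to X$. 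Since any morphism from $\mathbb{P}^{n_l}$ to a projective variety of smaller dimension must be constant (by an intersection computation against the pull-back of an ample class), $f_y$ is either constant or is generically finite onto its $n_l$-dimensional image $W_y\subset X$. In the constant case, $f$ factors through $\pi$ and the inductive hypothesis applied to the induced morphism $Y'\to X$ concludes. In the non-constant case, the family $\{W_y\}_{y\in Y'}$ is a covering family of rational $n_l$-folds in $X$, and standard results on unsplit covering families together with Lazarsfeld's theorem applied fibrewise produce a Mori fibre contraction $\phi\colon X\to X'$ whose general fibre is $\mathbb{P}^{n_l}$; the simple connectedness of $X$ inherited from $Y$ then forces this bundle to be trivial, so $X=X'\times \mathbb{P}^{n_l}$, and the inductive hypothesis applied to $Y'\to X'$ concludes. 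With $c_X=0$ established, Corollary~\ref{cor:mukai-type-klt-fano} yields $X\simeq \prod_j \mathbb{P}^{t_j}$.

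The main obstacle will be the non-constant case of the inductive step: producing the Mori fibre contraction $\phi\colon X\to X'$ from the covering family $\{W_y\}$ and then showing the resulting $\mathbb{P}^{n_l}$-bundle is globally trivial. The first point requires controlling how the subvarieties $W_y$ sit inside $X$ (to ensure that they assemble into a genuine fibration), while the second relies on the simple connectedness of $X$ combined with the splitting of $\mathbb{P}^{n_l}$-bundles over a simply-connected base admitting a section. As a possible alternative that sidesteps this obstacle, one may try to descend the decomposition $-K_Y=\sum_j(n_j+1)H_j$ through $f$ directly to obtain a nef Cartier decomposition of $-K_X$ with coefficient sum equal to $\dim X+\rho(X)$, and then invoke Corollary~\ref{cor:mukai-type-klt-fano}.
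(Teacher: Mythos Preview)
Your proposal contains a genuine gap in the inductive step. The claim that ``the simple connectedness of $X$ \ldots\ forces this bundle to be trivial'' is false: a $\mathbb{P}^{n_l}$-bundle over a simply connected base need not be trivial, even if it admits a section. For instance, $\mathbb{P}(T_{\mathbb{P}^2})$ is a nontrivial $\mathbb{P}^1$-bundle over the simply connected base $\mathbb{P}^2$, and any Hirzebruch surface $\mathbb{F}_e$ with $e>0$ is a nontrivial $\mathbb{P}^1$-bundle over $\mathbb{P}^1$ with a section. So even if you succeed in producing a Mori fibre contraction $\phi\colon X\to X'$ with $\mathbb{P}^{n_l}$-fibres, you cannot conclude $X\cong X'\times\mathbb{P}^{n_l}$. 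Your alternative suggestion of descending the decomposition $-K_Y=\sum_j(n_j+1)H_j$ also runs into trouble: after reducing to $f$ finite one has $\dim X=\dim Y$, but the injection $f^*\colon \mathrm{N}^1(X)\hookrightarrow \mathrm{N}^1(Y)$ only gives $\rho(X)\le\rho(Y)$, and there is no a priori reason the $H_j$ lie in the image of $f^*$, so one cannot directly obtain a nef Cartier decomposition of $-K_X$ with coefficient sum $\dim X+\rho(X)$.

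The paper's proof is entirely different and does not pass through Corollary~\ref{cor:mukai-type-klt-fano}. After reducing to $\phi$ finite (Lemma~\ref{lem:reduce-to-fin-case}), it chooses $\phi$ of \emph{minimal degree} among all finite surjections from products of projective spaces onto $Y$, and argues that $\phi$ is \'etale, hence an isomorphism (since Fano manifolds are simply connected). To see \'etaleness, one shows the branch divisor $B$ is vertical over every elementary contraction $f\colon Y\to Z$; the key technical inputs are that all such contractions are equidimensional fibrations (Lemma~\ref{lem:img-fano-nef-is-pseff}), that the fibres over $Z_{\rm reg}$ are projective spaces (Lemma~\ref{lem:isotriv-over-smth-locus}), and a fibre-product comparison (Lemma~\ref{lem:shared-base}) combined with the minimality of $\deg\phi$ to identify $X$ with the normalized fibre product. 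The splitting issue that blocks your argument is precisely what Lemma~\ref{lem:shared-base} is designed to handle.
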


The previous statement can also be deduced from the work~\cite{DHP08} of Demailly, Hwang, and Peternell, characterizing complex manifolds that are the image of a complex torus via surjective morphisms.

\medskip

\subsubsection*{Structure of the paper}
In Section~\ref{preparation}, we collect some basic notions and notations that will be used throughout this article.
In Section~\ref{fujita type:section}, we prove the numerical Fujita-type theorems and a characterization of projective spaces of Kobayashi--Ochiai type.
In Section~\ref{induction:proof}, we prove Conjecture \ref{Mukai type conjecture} by induction on the dimension.
In Section~\ref{sec:total-index-dP}, we compute the total indices of smooth del Pezzo surfaces.
In Section~\ref{sec:smooth-Fano-3folds}, we give a classification of smooth Fano threefolds with nef complexity at most one. 
In Section \ref{many MFS:section}, we study smooth Fano varieties with many Mori fiber space structures.
In Section~\ref{sec:smooth-images}, we study smooth images of products of projective spaces.

\section*{Acknowledgements}
This paper is an outcome of the workshop ``Higher dimensional log Calabi--Yau varieties'' held in 2024 at the American Institute of Mathematics. 
The authors are indebted to the the institute for its support and hospitality.

The authors thank Professors Cinzia Casagrande, Kento Fujita, and Osamu Fujino for valuable comments and discussions related to the results of the paper. In particular, Fujino suggested a simpler proof of Theorem 
\ref{numerical Fujita type}.
\medskip

SF was partially supported by the ERC starting grant $\#$804334 and by Duke University.
YG was partially supported by grants JSPS KAKENHI $\#$16H02141, 17H02831, 18H01108, 19KK0345, 20H0011.
JM was partially supported by NSF research grant DMS-2443425.
RS was partially supported by 
“Programma per giovani ricercatori Rita Levi Montalcini” of MUR and by PSR 2022 – Linea 4 of the University of Milan. 
He is a member of the GNSAGA group of INDAM. CW was partially supported by starting grant $\#$53331005425 of Tsinghua University.
KW was partially supported by JSPS KAKENHI $\#$21K03170, 25K06940, and part of this work was carried out during his stay at Universit\'{e} C\^{o}te d’Azur. 

\section{Preliminaries}
\label{preparation}

In this section, we recall some notation and basic notions that will be used throughout the article.

\subsubsection*{Big and pseudo-effective Weil divisors}
We recall the definition of bigness and pseudo-effectiveness for Weil and $\mathbb R$-Weil divisors.

\begin{definition}
\label{def:pseudoeffective.Weil}
Let $X$ be a normal projective variety and let 
$D$
(resp. $D'$)
be a 
Weil 
$\mathbb Q$-divisor
(resp. $\mathbb R$-divisor).
\begin{enumerate}
    \item 
We say that 
$D$ 
is big if there exist an ample 
$\mathbb Q$-Cartier 
$\mathbb Q$-divisor 
$A$ 
and an effective 
$\mathbb Q$-divisor 
$E$ 
such that 
$D \sim_\mathbb{Q} A+E$.
\item 
We say that 
$D$ 
is pseudo-effective if for every ample 
$\mathbb Q$-Cartier 
$\mathbb Q$-divisor 
$H$, 
$D+H$ 
is big.
\item 
We say that 
$D'$ 
is big if there exist an ample 
$\mathbb R$-Cartier 
$\mathbb R$-divisor 
$A'$ 
and an effective 
$\mathbb R$-divisor 
$E'$ 
such that 
$D' \sim_\mathbb{R} A'+E'$.
\item 
We say that 
$D'$ 
is pseudo-effective if for every ample 
$\mathbb R$-Cartier 
$\mathbb R$-divisor 
$H'$, 
$D'+H'$ 
is big.
\end{enumerate}
\end{definition}

\begin{remark}
\label{rem:def.pseudoeff}
    We adopt the same notations and assumptions as in the above definition, and we recall some of its simplest consequences.
\begin{enumerate}
    \item
    \label{cond:rem.pseff.eff.pseff}
    If a positive integral multiple of 
    $D$ 
    is effective, then
    $D$ 
    is pseudo-effective.
    \item 
    \label{cond:rem.pseff.nef.big}
    If 
    $D$ 
    is moreover a nef 
    $\mathbb Q$-Cartier 
    $\mathbb Q$-divisor, 
    then $D$ is big if and only if 
    $D^{\dim X} >0$, 
    see 
    \cite[Theorem 2.2.15]{laz-pos-1}.
    Moreover the positivity of 
    $D^{\dim X}$ 
    (or lack thereof) can be computed for 
    $\pi^\ast D$ 
    via any generically finite map 
    $\pi \colon Y \to X$.
     \item 
    \label{cond:rem.pseff.equiv.def.big}
    The $\mathbb R$-divisor $D'$ is big if and only if 
    $D' \sim_{\mathbb R} \sum_{i=1}^n a_i D'_i$, 
    where
    $a_i > 0$ 
    and 
    $D'_i$
    is a big 
    $\mathbb Q$-divisor
    for all 
    $1\leq i\leq n$.
    \\
    Let us prove the claimed equivalence. 
    If 
    $D' 
    \sim_{\mathbb R} 
    \sum_{i=1}^n a_i D'_i$, 
    then 
    \begin{align*}
    D' 
    \sim_{\mathbb R} 
    \sum_{i=1}^n a_i (A'_i + E'_i) 
    =
    \tilde A' + \tilde E', 
    \quad 
    \tilde A' 
    \coloneqq
     \sum_{i=1}^n a_i A'_i, 
     \;
     \tilde E'\coloneqq \sum_{i=1}^n a_i E'_i.
    \end{align*}
    Vice versa, if 
    $D'$ 
    is big, then 
    $D' \sim_{\mathbb R} A'+E'$
    and 
    $A' = \sum_{j=1}^s b_j A'_j$, 
    $E' = \sum_{k=1}^t c_k E'_k$,
    where 
    $b_j, c_k >0$,
    $A'_j$ 
    (resp. $C'_k$)
    ample 
    (resp. effective)
    $\mathbb{Q}$-divisors.
    If 
    $t \leq s$, 
    then it suffices to choose positive rational numbers
    $\tilde b_k, \tilde c_k$
    for $1 \leq k \leq t$
    such that 
    $0 < c_k -\tilde c_k < b_k -\tilde b_k$
    and write
    \begin{align*}
    A'+E'
    &
    \sim_\mathbb{R}
    \sum_{k=1}^s 
    \underbrace{
    \left (
    \tilde b_k A'_k + \tilde c_k E'_k
    \right )}_{\text{big $\mathbb Q$-divisor}}
    +
    \sum_{k=1}^s 
    \overbrace{
    \left(
    c_k - \tilde c_k
    \right)}^{>0}
    \underbrace{
    \left (
    A'_k + E'_k
    \right )}_{\text{big $\mathbb Q$-divisor}}
    \\
    & +
    \sum_{k=1}^s 
    \overbrace{
    \left(
    b_k - \tilde b_k
    - c_k + \tilde c_k
    \right)}^{>0}
    \underbrace{A'_k}_{\text{ample $\mathbb Q$-divisor}}
    +
    \sum_{j > s}
    b_j A'_j.
    \end{align*}
    When $t < s$, it suffices to replace each 
    $A'_j$ 
    with
    \[
    A'_j = \underbrace{
    \frac 1n A'_j + 
    \frac 1n A'_j + 
    \dots +
    \frac 1n A'_j
    }_{\text{$n$ times}}
    \]
    and run the previous part of the argument.
\end{enumerate}
\end{remark}

\subsubsection*{Generalized pairs}
We recall now the basic notions about generalized pairs. 
For more details, we refer the reader to \cite{Bir20}.
First, we recall the definition of generalized log canonical and Kawamata log terminal.

\begin{definition}\label{defi:gp}
A \textit{generalized pair} $(X,B+M)$ is the datum of
\begin{itemize}
\item a normal projective variety $X$, 
\item an effective $\mathbb{R}$-divisor $B$ on $X$, and 
\item a b-$\mathbb{R}$-Cartier b-divisor $M$ over $X$ represented by some projective birational morphism $\varphi \colon  X' \to X$ and 
a nef $\mathbb{R}$-Cartier divisor $M'$ on $X'$
\end{itemize}
such that $M = \varphi_* M'$ and $K_{X}+B+M$ is $\mathbb{R}$-Cartier.
\end{definition}

\begin{definition}
\label{def:gen-lc}
Let $(X, B+M)$ be a generalized pair.
Let $\pi \colon  Y\rightarrow X$ be a projective birational morphism.
We can write
\[
K_Y+B_Y+M_Y=\pi^*(K_X+B+M),
\]
for some divisor $B_Y$ on $Y$.
The {\em generalized log discrepancy}
of $(X,B+M)$ at a prime divisor $E\subset Y$, denoted by $a_E(X,B+M)$,
is defined to be $1-{\rm coeff}_E(B_Y)$.

A generalized pair $(X, B+M)$ is said to be {\em generalized log canonical} (or glc for short) if all of its generalized log discrepancies are non-negative.
A generalized pair $(X, B+M)$ is said to be {\em generalized Kawamata log terminal} (or gklt for short) if all of its generalized log discrepancies are positive.
In the previous definitions, if the analogous statement holds for $M=0$, then we drop the word ``generalized''.
\end{definition}

\subsubsection*{Log Calabi--Yau pairs}
In this section we introduce the definition of the log Calabi--Yau property for generalized pairs and we state several technical lemmata that shall be used in this article.

\begin{definition}
A generalized pair $(X,B+M)$ is said to be {\em generalized log Calabi--Yau} if
$K_X+B+M\equiv 0$ and $(X,B+M)$ has glc singularities.
We say that $(X,B+M)$ is a {\em generalized klt log Calabi--Yau pair} if 
$K_X+B+M\equiv 0$ and $(X,B+M)$ has generalized klt singularities.

When $M=0$, then we simply talk about a log Calabi--Yau pair 
(resp. klt log Calabi--Yau pair).
\end{definition}

Recall that a normal projective variety $X$ is said to be of Fano type if it admits an effective 
$\mathbb Q$-divisor
$\Theta$
such that 
$(X, \Theta)$
is klt and 
$-(K_X+\Theta)$
is ample.

The following is a simple characterization of the Fano type property using klt log Calabi--Yau pairs.
\begin{lemma}\cite[\S 2.10]{Bir19}
    A normal projective variety $X$ is Fano type if and only if there exists an effective $\mathbb R$-divisor $\Gamma$ such that $(X,\Gamma)$ is klt log Calabi--Yau and $\Gamma$ is big.
\end{lemma}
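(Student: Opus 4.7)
The statement splits into two implications, and my plan is to address each separately, with the common idea of moving a suitable ample slice between the anticanonical divisor and the boundary.

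For the forward direction, assume $X$ is of Fano type with witness effective $\mathbb Q$-divisor $\Theta$, so that $(X,\Theta)$ is klt and $A \coloneqq -(K_X+\Theta)$ is ample. I would pick $m$ large enough that $mA$ is very ample and let $E'$ be a general member of $|mA|$; a Bertini argument then guarantees that $(X, \Theta + \tfrac{1}{m}E')$ remains klt. Setting $\Gamma \coloneqq \Theta + \tfrac{1}{m}E'$, we have $K_X + \Gamma \sim_{\mathbb Q} K_X + \Theta + A = 0$, so $(X,\Gamma)$ is klt log Calabi--Yau. Bigness of $\Gamma$ follows from the $\mathbb Q$-linear equivalence $\Gamma \sim_{\mathbb Q} \Theta + A$, which exhibits $\Gamma$ as the sum of an effective divisor and an ample one.

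For the converse, start with $(X,\Gamma)$ klt log Calabi--Yau with $\Gamma$ big. By the definition of bigness of $\mathbb R$-divisors recalled in Definition~\ref{def:pseudoeffective.Weil}, we can write $\Gamma \sim_{\mathbb R} A + E$ with $A$ an ample $\mathbb R$-Cartier divisor and $E$ an effective $\mathbb R$-divisor. The plan is to interpolate between $\Gamma$ and $E$: set $\Theta_\epsilon \coloneqq (1-\epsilon)\Gamma + \epsilon E$ for small $\epsilon > 0$. Using $K_X + \Gamma \equiv 0$, the identity $K_X + \Theta_\epsilon = (K_X + \Gamma) - \epsilon(\Gamma - E)$, and $\Gamma - E \sim_{\mathbb R} A$, one obtains $-(K_X+\Theta_\epsilon) \equiv \epsilon A$, which is numerically ample and hence ample by Kleiman's criterion. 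That $(X,\Theta_\epsilon)$ is klt for $0 < \epsilon \ll 1$ follows from the openness of the klt condition under small perturbations of the boundary, verified on a log resolution of $\Gamma + E$ by observing that the coefficients of the log transform shift only by $O(\epsilon)$.

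One final wrinkle is that the Fano type definition recorded in this paper requires an effective $\mathbb Q$-divisor, whereas $\Theta_\epsilon$ is a priori only an $\mathbb R$-divisor. To conclude, I would perform a standard rational perturbation: since both klt singularities and ampleness of $-(K_X+\Theta_\epsilon)$ are open conditions on the coefficients of $\Theta_\epsilon$, a sufficiently small rational wiggling of the coefficients appearing in $E$ and $\Gamma$ yields a nearby $\mathbb Q$-divisor $\Theta'$ with $(X,\Theta')$ klt and $-(K_X+\Theta')$ still ample. No step is deep; the only point requiring genuine care throughout is maintaining the klt property at each boundary redistribution, which is the main obstacle and is handled by Bertini in one direction and by openness of klt in the other.
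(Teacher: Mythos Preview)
The paper does not supply its own proof of this lemma; it is stated with a citation to \cite[\S 2.10]{Bir19}. Your argument is correct and is the standard one; in fact, the interpolation $(1-\epsilon)\Gamma + \epsilon E$ you use for the converse direction is precisely the maneuver the paper employs in its proof of the immediately following Lemma~\ref{lemma:fano_type}, which is the generalized-pair version of the same statement.
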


We can straightforwardly generalize the above criterion to the case of generalized log pairs.

\begin{lemma}
\label{lemma:fano_type}
    Let 
    $(X, B+M)$
    be a projective 
    generalized klt log Calabi--Yau pair.
    Then $X$ 
    is of Fano type if and only if 
    $-K_X$
    is big.
\end{lemma}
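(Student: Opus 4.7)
The forward implication is immediate from the definition of Fano type: if $(X,\Theta)$ is a klt pair with $-(K_X+\Theta)$ ample, then $-K_X \sim_{\mathbb{Q}} \Theta + (-(K_X+\Theta))$ writes $-K_X$ as the sum of an effective and an ample divisor, hence $-K_X$ is big.

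For the converse, the plan is to reduce to the (non-generalized) klt criterion stated just above the lemma: it suffices to produce an effective $\mathbb{R}$-divisor $\Gamma$ on $X$ such that $(X,\Gamma)$ is klt log Calabi--Yau with $\Gamma$ big. From $K_X+B+M\equiv 0$ and the bigness of $-K_X$, the class $B+M$ is big, so by Kodaira's lemma (cf.\ Remark~\ref{rem:def.pseudoeff}) one can fix an ample $\mathbb{R}$-Cartier divisor $A$ and an effective $\mathbb{R}$-divisor $N\geq 0$ on $X$ with $B+M\sim_{\mathbb{R}} A+N$.

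The core step is a perturbation that promotes the nef part $M$ to a big and nef class. Let $\varphi\colon X'\to X$ be a birational model on which $M$ is represented by a nef $\mathbb{R}$-divisor $M'$. For $0<\epsilon\ll 1$, the plan is to form the generalized pair $(X,B_\epsilon+M_\epsilon)$ with effective boundary $B_\epsilon:=(1-\epsilon)B+\epsilon N$ and nef part $M_\epsilon:=(1-\epsilon)M+\epsilon A$, represented on $X'$ by $(1-\epsilon)M'+\epsilon\,\varphi^\ast A$. A short calculation using $B+M\sim_{\mathbb{R}} A+N$ shows $K_X+B_\epsilon+M_\epsilon\equiv 0$, and by openness of the gklt condition $(X,B_\epsilon+M_\epsilon)$ remains gklt for $\epsilon$ small. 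Crucially the new nef part is nef and big on $X'$, since $\epsilon\,\varphi^\ast A$ is big.

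Then I would replace this big and nef class by a general effective representative. Using Kodaira's lemma on $X'$, pick $H'\geq 0$ with $H'\sim_{\mathbb{R}}(1-\epsilon)M'+\epsilon\,\varphi^\ast A$ and set $H:=\varphi_\ast H'$. Then $\Gamma:=B_\epsilon+H$ is effective and satisfies $K_X+\Gamma\sim_{\mathbb{R}} K_X+B_\epsilon+M_\epsilon\equiv 0$, so $\Gamma\equiv -K_X$ is big, and the preceding lemma then produces the Fano type structure on $X$. The main obstacle lies in arranging that $(X,\Gamma)$ is klt, which I would handle by a Bertini-type argument: replacing the nef and big class by a high multiple and choosing $H'$ as a sufficiently general member of an associated very ample subsystem, one makes $H'$ transverse to the components of $B_\epsilon$ and to the exceptional locus of $\varphi$, preserving the klt property inherited from the gklt pair $(X,B_\epsilon+M_\epsilon)$.
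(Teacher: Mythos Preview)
Your argument is correct and follows essentially the same route as the paper: decompose the big class $B+M\sim_{\mathbb R}A+N$, perturb to the gklt pair with boundary $(1-\epsilon)B+\epsilon N$ and nef part $(1-\epsilon)M+\epsilon A$, observe the new nef part is big and nef on the model $X'$, and then replace it by a general effective member to obtain an honest klt log Calabi--Yau pair. The only cosmetic difference is that the paper first passes to a small $\mathbb Q$-factorialization (which slightly streamlines the Cartier bookkeeping and the final Bertini step), whereas you work directly on $X$; your computation $K_X+B_\epsilon+M_\epsilon=(1-\epsilon)(K_X+B+M)+\epsilon(K_X+A+N)$ shows the perturbed log divisor is still $\mathbb R$-Cartier, so this causes no trouble.
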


\begin{proof}
    If $X$ is of Fano type it follows immediately from the definition that $-K_X$ is big.
    We now show that the opposite implication also holds.
    
    The claim is unaffected by taking a small $\mathbb Q$-factorialization.

    By assumption, $B+M$ is big, so we may write $B+M \sim_{\mathbb R} A +E$, where $A$ is ample and $E \geq 0$.
    Then, for $0 < \epsilon \ll 1$, $(X,(1-\epsilon)(B + M)+\epsilon E)$ is gklt.
    Then, since $A$ is ample and $M$ is the pushforward of a nef divisor, $\epsilon A + (1-\epsilon)M$ is the pushforward of a nef and big divisor.
    Thus, we may find $0 \leq D \sim_{\mathbb R} \epsilon A+(1-\epsilon)M$ such that $(X,(1-\epsilon)B+\epsilon E + D)$ is klt.
\end{proof}

We also show that the condition of being of Fano type descends along any contraction morphism for projective generalized klt log Calabi--Yau pairs.

\begin{lemma}
\label{big.anticanonical.lemma}
Let $(X, D)$ be a projective klt log Calabi--Yau pair.
Let $f \colon X \to Y$ be a contraction.
If $X$ is Fano type, then also $Y$ is Fano type.
\end{lemma}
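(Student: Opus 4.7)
The plan is to transfer the Fano type property from $X$ to $Y$ via the canonical bundle formula, using the characterization of Fano type given in Lemma \ref{lemma:fano_type}. The given klt log Calabi--Yau structure $(X, D)$ plays an auxiliary role: what I really use is that $X$ is of Fano type (together with the fact that $f$ is a contraction).

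By Lemma \ref{lemma:fano_type} applied to $X$ with trivial moduli part, the Fano type hypothesis yields an effective $\mathbb{R}$-divisor $\Gamma$ with $(X, \Gamma)$ klt log Calabi--Yau and $\Gamma$ big. I would then fix an ample $\mathbb{Q}$-divisor $A_Y$ on $Y$ and, using that the big cone is open, observe that $\Gamma - \epsilon f^{*}A_Y$ remains big for all sufficiently small $\epsilon > 0$. By Kodaira's lemma combined with a standard Bertini/tie-breaking argument, I can choose an effective $\mathbb{R}$-divisor $\Gamma_0 \sim_{\mathbb{R}} \Gamma - \epsilon f^{*}A_Y$ such that $\Gamma' := \epsilon f^{*}A_Y + \Gamma_0$ is $\mathbb{R}$-linearly equivalent to $\Gamma$ and $(X, \Gamma')$ is still klt log Calabi--Yau. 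Applying the canonical bundle formula (in the form due to Ambro/Fujino/Filipazzi) to $f\colon (X,\Gamma') \to Y$, I obtain a generalized pair $(Y, B_Y + M_Y)$ with $K_X + \Gamma' \sim_{\mathbb{R}} f^{*}(K_Y + B_Y + M_Y)$. Since the left-hand side is $\mathbb{R}$-trivial and $f$ is a contraction, $K_Y + B_Y + M_Y \sim_{\mathbb{R}} 0$, so $(Y, B_Y + M_Y)$ is a generalized klt log Calabi--Yau pair with $-K_Y \sim_{\mathbb{R}} B_Y + M_Y$.

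The crux is the bigness of $-K_Y$. Unpacking the discriminant formula, for each prime divisor $P \subset Y$ with coefficient $a_P$ in $A_Y$ and pullback $f^{*}P = \sum_i m_i E_i$, the inclusion $\Gamma' \geq \epsilon f^{*}A_Y$ forces $\operatorname{coeff}_{E_i}(\Gamma') \geq \epsilon m_i a_P$; a short computation with log canonical thresholds then yields $\operatorname{coeff}_P(B_Y) \geq \epsilon a_P$, so $B_Y \geq \epsilon A_Y$ as $\mathbb{R}$-divisors on $Y$. In particular, $B_Y$ is big, and since $M_Y$ is pseudo-effective (as the pushforward of a nef divisor), $B_Y + M_Y$ is big, hence so is $-K_Y$. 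Applying the generalized form of Lemma \ref{lemma:fano_type} to $(Y, B_Y + M_Y)$ then concludes that $Y$ is of Fano type.

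The main obstacle is precisely the bigness argument in the last paragraph: one must carefully choose the perturbation $\Gamma'$ so that the vertical summand $\epsilon f^{*}A_Y$ survives with an ample image in the discriminant $B_Y$, which in turn requires some bookkeeping with the multiplicities $m_i$ along the components of $f^{*}P$. A secondary technical point is ensuring that the perturbed pair $(X, \Gamma')$ remains klt, which I would handle by a standard Bertini-type argument applied to the ample part of $\Gamma - \epsilon f^{*}A_Y$.
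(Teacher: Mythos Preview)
Your approach is correct and genuinely different from the paper's. The paper also reduces to showing that $-K_Y$ is big and then applies the Fano type criterion, but for the bigness it does not perturb the boundary. Instead, it applies the canonical bundle formula directly to the given klt log Calabi--Yau pair to produce a klt pair $(Y,E)$ with $K_Y+E\equiv 0$, and then handles bigness of $-K_Y$ in two cases: for $f$ birational it is immediate (pushforward of big is big), while for $f$ of positive relative dimension it passes to a $\mathbb{Q}$-factorialization of $Y$, lifts the pair via~\cite[Proposition~3.6]{BDCS20}, and invokes the Fujino--Gongyo result~\cite[Theorem~3.1]{fujgo} as a black box. Your route, planting $\epsilon f^\ast A_Y$ into the boundary and reading off $B_Y\ge \epsilon A_Y$ from the lct description of the discriminant, is more self-contained: it avoids the case split, the $\mathbb{Q}$-factorialization step, and the external citation, at the modest cost of the perturbation/Bertini bookkeeping you flag. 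Note that the characterization of Fano type you invoke at the start (existence of a big klt log Calabi--Yau boundary) is in the paper the lemma immediately \emph{preceding} Lemma~\ref{lemma:fano_type}, not Lemma~\ref{lemma:fano_type} itself; and for the final step you do need the generalized-pair form of Lemma~\ref{lemma:fano_type}, since the canonical bundle formula hands you $(Y,B_Y+M_Y)$ with a genuine moduli part.
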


\begin{proof}
As we are assuming that $X$ is Fano type, we can and will assume that $D$, equivalently, $-K_X$ is big, cf. Lemma~\ref{lemma:fano_type}.

The canonical bundle formula \cite{Amb04, Amb05} implies that there exists an effective divisor $E$ on $Y$ such that $(Y, E)$ is klt and 
$0 \equiv K_X+D \sim_{\mathbb R} f^\ast(K_Y+E)$.
Hence, by Lemma~\ref{lemma:fano_type}, it suffices to show that $-K_Y$ is big.

If $f$ is birational, then $-K_Y$ is big since pushing forward preserves bigness for divisors.

Let us assume that $f$ has strictly positive relative dimension.
By \cite[Corollary 1.4.3]{bchm}
there exists a $\mathbb Q$-factorialization $\psi \colon Y' \to Y$;
we define $E'  \coloneqq  \psi^{-1}_\ast E$.
Then \cite[Proposition 3.6]{BDCS20} implies that there exists a $\mathbb Q$-factorial klt pair
$(X', D')$ isomorphic in codimension one to $(X, D)$ and a commutative diagram
\[
\xymatrix{
X' \ar[r] \ar[d] & X \ar[d] 
\\
Y' \ar[r] & Y.}
\]
As $D'$ is the strict transform of $D$ on $X$, $D'$ is big.
Hence, we can conclude from \cite[Theorem 3.1]{fujgo} that $-K_{Y'}$ is big which in turn implies that $-K_{Y}$ is also big.
\end{proof}

Finally, we prove a lemma showing that the Cartier condition descends for divisors that are numerically trivial along any contraction of a projective generalized klt log Calabi--Yau pair.

\begin{lemma}
\label{lemma.pullback}
    Let 
    $(X, \Delta+M)$
    be a projective 
    generalized klt log Calabi--Yau pair.
    Assume that 
    $-K_X$
    is big.
    Let 
    $f \colon X \to T$
    be a contraction and let
    $D$ 
    be a Cartier divisor.
    If 
    $D \equiv_f 0$, 
    then there exists a Cartier divisor 
    $D'$
    on 
    $T$
    such that 
    $D \sim f^\ast D'$.
\end{lemma}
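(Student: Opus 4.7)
The plan is to descend $D$ through $f$ in two steps: first obtain a $\mathbb{Q}$-linear equivalence $mD \sim f^{\ast} D_{1}$ for some Cartier $D_{1}$ on $T$ and some integer $m \ge 1$, then upgrade it to an actual linear equivalence $D \sim f^{\ast} D'$. For the first step, I would exploit that both $X$ and $T$ are of Fano type. Indeed, the bigness of $-K_{X}$ forces $X$ to be Fano type by Lemma~\ref{lemma:fano_type}, and then so is $T$ by Lemma~\ref{big.anticanonical.lemma}. In particular, both varieties satisfy $h^{1}(\mathcal{O}) = 0$, so numerical equivalence of Cartier divisors coincides with $\mathbb{Q}$-linear equivalence after tensoring with $\mathbb{Q}$. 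The condition $D \equiv_{f} 0$ says that $D$ lies in the kernel of the surjection $N^{1}(X)_{\mathbb{Q}} \twoheadrightarrow N^{1}(X/T)_{\mathbb{Q}}$, i.e., in $f^{\ast} N^{1}(T)_{\mathbb{Q}}$, so $D \equiv f^{\ast} E$ for some $\mathbb{Q}$-Cartier $\mathbb{Q}$-divisor $E$ on $T$. Since $D - f^{\ast}E$ is numerically trivial on the Fano-type variety $X$, it is torsion in $\Pic(X)$, and clearing denominators yields $mD \sim f^{\ast} D_{1}$ for some $m \ge 1$ and some Cartier divisor $D_{1}$ on $T$.

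To promote this to an integral equivalence, it remains to show that the class $[D] \in \Pic(X)/f^{\ast}\Pic(T)$ is trivial, knowing it is $m$-torsion. The low-degree Leray exact sequence for $\mathbb{G}_{m}$ provides an injection $\Pic(X)/f^{\ast}\Pic(T) \hookrightarrow H^{0}(T, R^{1}f_{\ast}\mathbb{G}_{m})$, under which $[D]$ corresponds to the family of fibrewise line bundle classes $\{[D|_{X_{t}}]\}_{t \in T}$; these are $m$-torsion in $\Pic(X_{t})$ because $(mD)|_{X_{t}} = (f^{\ast}D_{1})|_{X_{t}} \sim 0$. I would then use that the general fiber $X_{\eta}$ is itself of Fano type---the restriction of $(X, \Delta+M)$ to $X_{\eta}$ is gklt log Calabi--Yau, and $-K_{X}$ restricts to a big divisor on a general fiber---together with Braun's finiteness theorem for \'etale fundamental groups of klt Fano-type varieties and Koll\'ar's comparison of \'etale fundamental groups along rationally connected contractions, to conclude that the torsion in $\Pic$ of $X$, of $T$, and of the general fiber of $f$ is controlled and compatible in a way that forces the fibrewise torsion section to vanish. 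This gives $D \sim f^{\ast} D'$ for some Cartier divisor $D'$ on $T$.

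The main obstacle is the second step. The Fano-type and Mori-dream-space machinery readily yields $\mathbb{Q}$-linear descent, but removing the denominator $m$ is a subtle torsion question in $\Pic$, since klt Fano-type varieties may possess non-trivial torsion in their Picard groups (arising from their finite \'etale fundamental group). The key inputs to control and match this torsion in the fibered setting are Braun's theorem on $\pi_{1}^{\text{\'et}}$ of Fano-type varieties and the descent of fundamental groups along contractions with rationally connected general fibers.
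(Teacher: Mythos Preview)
Your approach is much more involved than necessary, and both steps have genuine problems.

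In Step~1, the assertion that the kernel of $N^{1}(X)_{\mathbb{Q}} \twoheadrightarrow N^{1}(X/T)_{\mathbb{Q}}$ coincides with $f^{\ast}N^{1}(T)_{\mathbb{Q}}$ is \emph{not} a tautology; this is essentially the content of the contraction theorem (Koll\'ar--Mori, Theorem~3.7(4)), whose proof rests on the basepoint-free theorem. Without that input there is no a~priori reason why a relatively numerically trivial Cartier divisor should be even numerically a pullback from $T$. So Step~1, as written, quietly assumes the very descent you are trying to prove.

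Step~2 is a sketch rather than a proof. You appeal to Braun's finiteness theorem and to Koll\'ar's results on $\pi_{1}$ along rationally connected fibrations, but never verify that the relevant torsion actually vanishes. Klt Fano-type varieties can have nontrivial torsion in $\Pic$, so a general fiber $X_{t}$ may carry honest $m$-torsion line bundles; it is not clear why the fibrewise $m$-torsion section of $R^{1}f_{\ast}\mathbb{G}_{m}$ determined by $D$ should be zero, nor how the invoked results force it to be.

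The paper's proof avoids all of this with a single application of the relative basepoint-free theorem. Since $X$ is of Fano type by Lemma~\ref{lemma:fano_type}, one may fix a klt boundary $B$ with $-(K_{X}+B)$ nef and big; then $D$ is $f$-nef and $aD-(K_{X}+B)$ is nef and big over $T$ for any $a>0$, so $|bD|$ is $f$-free for all $b\gg 0$. Because $D\equiv_{f}0$ and $f$ has connected fibers, the resulting morphism over $T$ is constant on fibers, hence $bD\sim f^{\ast}D_{b}$ for some Cartier divisor $D_{b}$ on $T$. Applying this for two consecutive integers $b$ and $b+1$ and subtracting gives $D\sim f^{\ast}(D_{b+1}-D_{b})$, which is exactly the integral statement. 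No separate torsion argument is needed.
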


\begin{proof}
   By Lemma~\ref{lemma:fano_type}, $X$ is of Fano type.
   Fix a boundary $B$ such that $(X,B)$ is klt and $-(K_X+B)$ is nef and big.
   Then, the claim follows by applying the relative basepoint-free theorem \cite[Theorem 3.24]{komo} to $T$, $(X,B)$, and the morphism $f$.
\end{proof}

\section{Numerical Fujita-type theorems}\label{fujita type:section}

In this section, we discuss the numerical Fujita-type
theorems and a characterization of projective spaces of Kobayashi--Ochiai type.
First, we prove the following theorem:

\begin{theorem}
\label{numerical Fujita type}
Let $X$ be an $n$-dimensional normal projective variety. 
Let 
$L_{1}, \dots, L_{k}$ 
be nef and big Cartier divisors on $X$, and 
$a_1, \dots, a_k$
positive real numbers.
If 
$\sum_{i=1}^{k}a_i \geq n+1$, 
then 
$K_X+a_1L_1+\dots + a_{k}L_{k}$ 
is pseudo-effective.  
\end{theorem}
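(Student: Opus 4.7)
My plan is to reduce to the case that $X$ is smooth, to normalize so that $\sum a_i = n+1$, and to derive a numerical contradiction from an assumed Mori fibre space produced by an MMP. The reduction to smooth $X$ is achieved via a resolution of singularities $\pi\colon Y\to X$: each $\pi^\ast L_i$ is nef and big Cartier, $\pi_\ast K_Y = K_X$ as Weil divisors, and pushforward preserves bigness, so pseudo-effectivity of $K_Y+\sum a_i\pi^\ast L_i$ on $Y$ implies pseudo-effectivity of $K_X+\sum a_iL_i$ on $X$. Normalization to $\sum a_i = n+1$ uses that the statement for larger sums follows by adding a non-negative combination of the nef divisors $L_i$, which is itself pseudo-effective.

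Assume from now on that $X$ is smooth and $\sum a_i = n+1$, and suppose for contradiction that $K_X+\sum a_iL_i$ is not pseudo-effective. Setting $M:=\sum a_iL_i$, which is nef and big, I invoke Kodaira's lemma together with a Bertini argument to produce an effective $\mathbb R$-divisor $B\sim_{\mathbb R}M$ with $(X,B)$ klt. Then $K_X+B$ is not pseudo-effective, and by BCHM a $(K_X+B)$-MMP with scaling terminates in a Mori fibre space $\phi\colon X^\circ\to T$ with $-(K_{X^\circ}+B^\circ)$ ample over $T$. Let $L_i^\circ$ denote the strict transform of $L_i$ on $X^\circ$; for a suitably chosen MMP, each $L_i^\circ$ remains nef, big, and Cartier, and $\sum a_iL_i^\circ\equiv K_{X^\circ}+B^\circ$.

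For a general fibre $F$ of $\phi$, adjunction together with the Mori fibre space property makes $F$ a Fano type variety of dimension $\le n$. Kawamata's bound on the length of extremal rays yields a rational curve $C\subset F$ through a general point of $F$ with $-K_F\cdot C\le \dim F+1\le n+1$. The null locus $\mathrm{Null}(L_i^\circ)$ is a proper closed subset of $X^\circ$ because $L_i^\circ$ is nef and big, so for general $F$ we may select a general point of $F$ outside each $F\cap\mathrm{Null}(L_i^\circ)$, and the rational curve $C$ chosen through that point satisfies $C\not\subset\mathrm{Null}(L_i^\circ)$. The Cartier condition then forces $L_i^\circ\cdot C\ge 1$ for every $i$, whence
\[
\bigl(K_{X^\circ}+\sum_ia_iL_i^\circ\bigr)\cdot C\;\ge\;-(n+1)+\sum_ia_i\;\ge\;0,
\]
contradicting the fact that $K_{X^\circ}+B^\circ\equiv\sum_ia_iL_i^\circ$ is strictly negative on any curve contained in a fibre of $\phi$.

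The principal difficulty I foresee is controlling the nefness (and the Cartier property) of the $L_i^\circ$ across the flips of the MMP, since a flip can destroy nefness of a divisor having strictly positive intersection with the flipping ray. One fix is to orchestrate the $(K_X+B)$-MMP with scaling so that every contraction occurs along a ray that is $L_i$-trivial for every $i$, another is to pass from $L_i$ to a small ample perturbation at each step and take limits. An alternative that sidesteps the MMP altogether is to invoke Boucksom--Demailly--Păun--Peternell duality together with Miyaoka--Mori: non-pseudo-effectivity of $K_X+M$ implies $K_X$ is not pseudo-effective, so $X$ is uniruled, and bend-and-break yields a covering family of rational curves with $-K_X\cdot C\le n+1$; a general member of such a covering family avoids $\bigcup_i\mathrm{Null}(L_i)$, so $L_i\cdot C\ge 1$, producing the same numerical contradiction directly.
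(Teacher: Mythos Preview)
Your proposal has two genuine gaps, one in each of your suggested approaches.

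\textbf{The MMP route.} The assertion that ``for a suitably chosen MMP, each $L_i^\circ$ remains nef, big, and Cartier'' is precisely the hard point, and it is not justified. A step of the $(K_X+B)$-MMP contracts a ray $R$ that is $(K_X+B)$-negative; since each $L_i$ is nef, $L_i\cdot R\ge 0$, and there is no obstruction to $L_i\cdot R>0$. After a flip of such a ray the strict transform of $L_i$ is in general neither nef nor Cartier. Your first proposed fix (arrange all rays to be $L_i$-trivial) cannot succeed: on the terminal Mori fibre space at least one $L_i^\circ$ must be positive on the contracted ray, else all $L_i^\circ$ would be pulled back from the base, contradicting bigness; so one cannot avoid $L_i$-positive rays throughout. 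Your second fix (small ample perturbation and limits) destroys the integrality that gives $L_i^\circ\cdot C\ge 1$, which is exactly what you need. There is also a slip: you write $\sum a_iL_i^\circ\equiv K_{X^\circ}+B^\circ$, but you mean $\sum a_iL_i^\circ\equiv B^\circ$.

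\textbf{The BDPP alternative.} Here the logic does not close. Miyaoka--Mori gives a covering rational curve $C$ with $-K_X\cdot C\le n+1$, and avoiding the null loci gives $L_i\cdot C\ge 1$, hence $(K_X+M)\cdot C\ge 0$. But this does not contradict the non-pseudo-effectivity of $K_X+M$: BDPP only guarantees a movable class $\gamma$ with $(K_X+M)\cdot\gamma<0$, and there is no reason that your curve $C$ represents (a multiple of) $\gamma$.

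The paper's proof avoids the MMP altogether and is much shorter. One first treats the case of a single divisor: for $L$ nef and big Cartier, the polynomial $P(t)=\chi(K_{X'}+t\,\pi^\ast L)$ on a resolution $\pi\colon X'\to X$ has degree $n$ with positive leading coefficient, and by Kawamata--Viehweg vanishing $P(m)=h^0(K_{X'}+m\,\pi^\ast L)$ for $m\ge 1$; hence some $1\le m\le n+1$ has $P(m)>0$, so $K_X+(n+1)L$ is pseudo-effective. After normalizing to $\sum a_i=n+1$, the general statement follows from the convexity identity
\[
\sum_{i=1}^k a_i\bigl(K_X+(n+1)L_i\bigr)\;=\;(n+1)\Bigl(K_X+\sum_{i=1}^k a_iL_i\Bigr),
\]
since a non-negative combination of pseudo-effective classes is pseudo-effective.
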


The following is a weak version of the above theorem which will be used in its proof.

\begin{proposition}\label{weak numerical Fujita type}
Let 
$X$ 
be an 
$n$-dimensional 
normal projective variety and 
$L$ 
a nef and big Cartier divisor on 
$X$.
Then, 
$K_X+(n+1)L$ 
is pseudo-effective. 
\end{proposition}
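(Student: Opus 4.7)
The plan is to argue by induction on the Picard number $\rho(X)$, after reducing to the case where $X$ is smooth. For the reduction, let $\pi\colon \tilde X \to X$ be a resolution; then $\pi^{*}L$ is nef, big, and Cartier on $\tilde X$, and if the conclusion holds for $\tilde X$ one has $K_{\tilde X} + (n+1)\pi^{*}L$ pseudo-effective. Pushing forward yields $K_X + (n+1)L$ pseudo-effective, using $\pi_{*}K_{\tilde X} = K_X$ as Weil divisors together with the fact that birational pushforward preserves pseudo-effectivity of Weil divisors (for any ample $\mathbb{Q}$-Cartier $H$ on $X$, Kodaira's lemma applied to $\pi^{*}H$ combined with pseudo-effectivity upstairs gives bigness of the pushforward plus $\epsilon H$ downstairs). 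So I may assume $X$ is smooth.

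For the base case $\rho(X) = 1$, the nef big Cartier divisor $L$ is ample, and Mori's cone theorem with the length bound $-K_X \cdot C \leq n+1$ on a minimal rational curve in any $K_X$-negative extremal ray gives $(K_X + (n+1)L)\cdot C \geq -(n+1) + (n+1) = 0$ (using $L\cdot C \geq 1$ for Cartier ample $L$), forcing nefness of $K_X+(n+1)L$. For the inductive step $\rho(X) \geq 2$, if $K_X + (n+1)L$ were not pseudo-effective, the same computation shows there must exist a $K_X$-negative extremal ray $R$ with $L\cdot R = 0$: otherwise $L\cdot C \geq 1$ on every such ray (integrality of $L\cdot C$) and nefness would follow. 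Let $\phi\colon X \to Y$ be the contraction of $R$; by the basepoint-free theorem, $L = \phi^{*}L_Y$ for some nef and big Cartier $L_Y$ on the normal projective variety $Y$ (bigness from $L_Y^n = L^n > 0$). The bigness of $L$ excludes $\phi$ being of fiber type (which would force $L^n = 0$), and smoothness of $X$ forces $\phi$ to be divisorial, say with exceptional divisor $E$ and $\rho(Y) = \rho(X)-1$. By the induction hypothesis, $K_Y + (n+1)L_Y$ is pseudo-effective; pulling back and using $K_X = \phi^{*}K_Y + aE$ with $a > 0$ (the discrepancy of a $K_X$-negative divisorial contraction), we conclude $K_X + (n+1)L = \phi^{*}(K_Y+(n+1)L_Y) + aE$ is pseudo-effective, contradicting the assumption.

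The chief subtlety is that the contracted variety $Y$ is only normal with $\mathbb{Q}$-factorial terminal singularities, not smooth, so the induction must be carried out within the broader category of normal projective varieties (where the proposition is stated). This means ensuring Mori's length bound $n+1$ is still available in the mildly singular setting encountered after contraction, or equivalently re-resolving $Y$ and tracking how the Picard number evolves, as well as ruling out small contractions---standard for smooth $X$ but more delicate further down the iterative step. Fitting together the reduction to smooth, the contraction step, and the treatment of the (mildly singular) contracted variety in a consistent inductive setup is the technical heart of the argument.
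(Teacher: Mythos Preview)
Your inductive setup has a genuine gap that you yourself flag but do not resolve, and it cannot be resolved along the lines you suggest. The length bound $-K_X\cdot C\le n+1$ on a minimal rational curve in a $K_X$-negative extremal ray is a feature of \emph{smooth} varieties; for terminal (or klt) varieties the available bound is only $2\dim X$ (Kawamata), and with that weaker bound the computation $(K_X+(n+1)L)\cdot C\ge -(n+1)+(n+1)(L\cdot C)$ breaks down: one can have $L\cdot C=1$ and still $(K_X+(n+1)L)\cdot C<0$, so you cannot force $L\cdot R=0$ on the contracted variety. Since the contracted $Y$ is generally singular and resolving it raises the Picard number again, neither branch of your proposed fix (``ensure the $n+1$ bound on $Y$'' or ``re-resolve and track $\rho$'') closes the induction. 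Separately, the assertion that smoothness of $X$ rules out small extremal contractions is false in dimension $\ge 4$ (Kawamata constructed small $K_X$-negative contractions of smooth fourfolds); this is not fatal by itself, since for a small contraction one still has $K_X+(n+1)L=\phi^*(K_Y+(n+1)L_Y)$, but it is an incorrect claim.

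The paper sidesteps all of this with a short cohomological argument. After the same reduction to smooth $X$, consider the numerical polynomial $P(t)=\chi(K_X+tL)$. By Riemann--Roch its leading term is $\frac{L^n}{n!}t^n\ne 0$, so $P$ has degree $n$ and cannot vanish at all of $t=1,\dots,n+1$; by Kawamata--Viehweg vanishing $P(t)=h^0(X,K_X+tL)$ for $t\ge 1$, so some $K_X+mL$ with $1\le m\le n+1$ is effective, and then $K_X+(n+1)L=(K_X+mL)+(n+1-m)L$ is effective plus nef, hence pseudo-effective. No MMP, no length bounds, no induction on $\rho$.
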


\begin{proof}
Let us consider a resolution 
$\pi \colon X' \to X$.
It suffices to show that there exists a positive integer 
$0 < m \leq n+1$
such that 
$K_{X'}+m\pi^\ast L$ 
is effective:
indeed, 
\begin{align*}
\pi_\ast (K_{X'}+m\pi^\ast L)
=K_X+mL,  
\end{align*}
and the pushforward of an effective Weil divisor is still effective.
Thus, the conclusion will follow from Remark
\ref{rem:def.pseudoeff}.\ref{cond:rem.pseff.eff.pseff} 
which implies that
\begin{align*}
    K_X+(n+1)L =
    \underbrace{(K_X+mL)}_{\text{effective}}+
    \overbrace{(n+1-m)}^{\geq 0}
    \underbrace{L}_{\text{big and nef}}
\end{align*}
is pseudo-effective. 
Hence, substituting $(X, L)$ with $(X', \pi^\ast L)$, we can assume that $X$ is smooth and proceed to prove that an integer $m$ satisfying the above properties exists.

Let us consider the numerical polynomial 
$P(t) \coloneqq \chi ( K_X+tL)$.
By the Hirzebruch--Riemann--Roch Theorem, 
\begin{align*}
P(t) = \dfrac{L^n}{n!}t^n+O(t^{n-1}).
\end{align*}
Since 
$L^n>0$, 
$P(\tilde t) >0$ 
for all 
$\tilde t \gg 0$.
By the Kawamata--Viehweg vanishing theorem,
for all positive integers
$t$, 
$P(t)= 
\dim H^0(X, K_X+tL)$.
Since 
$P(t)$ 
is a polynomial in
$t$
of degree 
$n$,
there exists a positive integer 
$0 < m \leq n+1$ 
such that  
$P(l) \not =0$.
\end{proof}

We now prove Theorem \ref{numerical Fujita type}.

\begin{proof}
Up to substituting each $a_i$ with $a_i':=(n+1)\frac{a_i}{\sum_{i=1}^k a_i} \leq a_i$, we may assume that $\sum_{i=1}^ka_i=n+1 $.
Since 
\begin{align*}
\sum_{i=1}^k a_i(K_X+(n+1)L_i)= (n+1)(K_X+\sum_{i=1}^ka_i L_i),
\end{align*}
then Proposition \ref{weak numerical Fujita type} and Remark \ref{rem:def.pseudoeff}.\ref{cond:rem.pseff.eff.pseff} 
imply that 
$K_X+\sum_{i=1}^ka_i L_i$ 
is pseudo-effective.
\end{proof}

We readily obtain the following corollaries:

\begin{corollary}\label{numerical Fujita type-cor}
Let $X$ be an $n$-dimensional normal projective variety. 
Let 
$L_{1}, \dots, L_{k}$ 
be nef and big Cartier divisors on $X$, and 
$a_1, \dots, a_k$
positive real numbers.
If 
$\sum_{i=1}^{k}a_i > n+1$, 
then 
$K_X+a_1L_1+\dots + a_{k}L_{k}$ 
is big.   
\end{corollary}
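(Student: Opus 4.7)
The plan is to bootstrap Theorem \ref{numerical Fujita type} from pseudo-effectivity to bigness using the strict inequality $\sum a_i > n+1$ to extract an extra positive combination of big divisors.

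First, I would set $\varepsilon \coloneqq \sum_{i=1}^k a_i - (n+1) > 0$. For each $i$, I would split the coefficient as $a_i = a_i' + a_i''$ where
\[
a_i' \coloneqq (n+1)\frac{a_i}{\sum_{j=1}^k a_j}, \qquad a_i'' \coloneqq \varepsilon\frac{a_i}{\sum_{j=1}^k a_j},
\]
so that $a_i', a_i'' > 0$, $\sum_i a_i' = n+1$, and $\sum_i a_i'' = \varepsilon$. Then I decompose
\[
K_X + \sum_{i=1}^k a_i L_i = \underbrace{\left( K_X + \sum_{i=1}^k a_i' L_i \right)}_{(\ast)} + \sum_{i=1}^k a_i'' L_i.
\]

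Next, I would apply Theorem \ref{numerical Fujita type} to the coefficients $a_1', \dots, a_k'$, whose sum equals $n+1$: this shows that the term $(\ast)$ is pseudo-effective. On the other hand, $\sum_i a_i'' L_i$ is a positive $\mathbb{R}$-linear combination of nef and big Cartier divisors, hence big by Remark \ref{rem:def.pseudoeff}.\ref{cond:rem.pseff.equiv.def.big}.

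Finally, I would invoke the standard fact that the sum of a pseudo-effective $\mathbb{R}$-divisor and a big $\mathbb{R}$-divisor is big: this follows directly from Definition \ref{def:pseudoeffective.Weil}, since if $B$ is big then $B \sim_{\mathbb{R}} A + E$ with $A$ ample and $E \geq 0$, so after writing $A = \tfrac{1}{2}A + \tfrac{1}{2}A$ and adding $\tfrac{1}{2}A$ to the pseudo-effective summand to produce something big via Definition \ref{def:pseudoeffective.Weil}(4), the result is again of the form ample $+$ effective. There is no real obstacle here; the argument is essentially a one-line application of Theorem \ref{numerical Fujita type} together with the observation that strict inequality yields an honest big surplus.
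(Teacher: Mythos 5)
Your proposal is correct and takes essentially the same approach as the paper: the paper applies Theorem \ref{numerical Fujita type} to $K_X + a_1L_1 + \dots + (a_k - \epsilon)L_k$ for small $\epsilon > 0$ and then adds back $\epsilon L_k$, whereas you rescale all coefficients proportionally to sum to $n+1$ and add back the proportional surplus $\sum_i a_i'' L_i$; both reduce to pseudo-effective (from the theorem) plus nef-and-big, hence big. Your closing paragraph on ``pseudo-effective $+$ big $=$ big'' is the same observation the paper delegates to Remark~\ref{rem:def.pseudoeff}, just spelled out more explicitly.
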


\begin{proof}
By Theorem \ref{numerical Fujita type}, $K_X+ a_1L_1+\dots + (a_k -\epsilon) L_{k}$ is pseudo-effective for all  $0<\epsilon \ll a_k$.
Since $L_{k}$ is big, the conclusion now follows from Remark \ref{rem:def.pseudoeff}.
\end{proof}

Moreover, we prove the following theorem which is a
characterization of projective spaces of Kobayashi--Ochiai type:

\begin{theorem}
\label{Fano-case}
Let 
$X$ 
be a normal projective variety of dimension 
$n$. 
Let 
$L_1, \dots, L_k$ 
be nef and big Cartier divisors on 
$X$, 
and 
$a_1, \dots, a_k$
positive real numbers.

If there exists a decomposition 
$-K_X \equiv \sum_{i=1}^k a_i L_i$ 
such that 
$\sum_{i=1}^k a_i\geq n+1$, then equality holds, i.e., $\sum_{i=1}^k a_i = n+1$. 
Moreover, $X$ is isomorphic to the projective space $\mathbb P ^n$ and 
for all $1\leq i\leq k$, $\mathcal O_X(L_i)$ is isomorphic to $\mathcal O_{\mathbb P^n}(1)$.
\end{theorem}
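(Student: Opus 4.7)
The plan is to combine the Fujita-type theorems proved earlier in this section with a Kobayashi--Ochiai style argument. The hypothesis $\sum_i a_i \geq n+1$ will first be upgraded to the equality $\sum_i a_i = n+1$, then used to force $-K_X \equiv (n+1) L_i$ for every $i$, reducing the problem to a statement about a single nef and big Cartier divisor. Indeed, if $\sum_i a_i > n+1$, Corollary~\ref{numerical Fujita type-cor} would give that $K_X + \sum_i a_i L_i$ is big, contradicting the hypothesis $-K_X \equiv \sum_i a_i L_i$. Hence $\sum_i a_i = n+1$.

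Next, I would show that all the $L_i$ are numerically equivalent. Proposition~\ref{weak numerical Fujita type} ensures that each $K_X + (n+1) L_i$ is pseudo-effective. Since the coefficients $a_i/(n+1)$ are positive and sum to $1$, the convex combination
\[
\sum_{i=1}^k \frac{a_i}{n+1}\bigl( K_X + (n+1) L_i \bigr) = K_X + \sum_{i=1}^k a_i L_i
\]
is numerically trivial. Because the pseudo-effective cone of $N^1(X)_{\mathbb{R}}$ is pointed for $X$ projective, a positive linear combination of pseudo-effective classes summing to zero forces each summand to be numerically trivial. Therefore $-K_X \equiv (n+1) L_i$ for every $i$.

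The problem now reduces to the following Kobayashi--Ochiai type statement: if $X$ is a normal projective $n$-fold and $L$ is a nef and big Cartier divisor with $-K_X \equiv (n+1) L$, then $X \simeq \mathbb{P}^n$ and $L \simeq \mathcal{O}_{\mathbb{P}^n}(1)$. Once $X \simeq \mathbb{P}^n$ is established, $\Pic(\mathbb{P}^n) = \mathbb{Z}$ and the identity $(n+1) L_i \equiv -K_{\mathbb{P}^n}$ immediately give $\mathcal{O}_X(L_i) \simeq \mathcal{O}_{\mathbb{P}^n}(1)$ for every $i$. The main obstacle is establishing this last Kobayashi--Ochiai statement in the required generality: the classical theorem is proved for smooth $X$ with ample $L$, and Fujita extended it to normal $X$ with ample $L$; passing to nef and big $L$ is the technical heart of the theorem. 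A natural approach is to replace $L$ by its ample model via a birational contraction, descending $-K_X$ using an argument along the lines of Lemma~\ref{lemma.pullback}, and then invoking the ample version of the statement; this requires first verifying that $X$ has mild singularities (e.g.\ is of Fano type), which is plausible because $-K_X$ is nef and big.
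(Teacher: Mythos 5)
Your first two steps match the paper: the bound $\sum_i a_i = n+1$ via Corollary~\ref{numerical Fujita type-cor}, and the reduction to $K_X + (n+1)L_i \equiv 0$ for each $i$ via Proposition~\ref{weak numerical Fujita type} and the pointedness of the pseudo-effective cone, are exactly what the paper does (the paper phrases the second step as observing that $0 \equiv \sum_i a_i\bigl(K_X + (n+1)L_i\bigr)$ with each summand pseudo-effective).

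The genuine gap is in your final step. You acknowledge that passing to the ample model of $L$ and applying Lemma~\ref{lemma.pullback} requires $X$ to be of Fano type, but you brush this aside as ``plausible because $-K_X$ is nef and big.'' This is not correct: the theorem makes no assumption on the singularities of $X$ beyond normality, and $-K_X$ being nef, big and $\mathbb{Q}$-Cartier does not by itself force klt or Fano type singularities. This is precisely the technical content the proof must supply, since the semiampleness of $L_i$ (needed for the ample model to exist), the hypotheses of Lemma~\ref{lemma.pullback}, and ultimately the conclusion that $X$ is smooth all rest on it. The paper establishes that $(X, 0)$ has canonical singularities by taking a dlt modification followed by a terminalization, producing a partial resolution $\psi \colon X'' \to X$ and an effective divisor $G$ with $K_{X''} + \psi^\ast\bigl(\sum_i a_i L_i\bigr) + G \equiv 0$; the Fujita-type pseudo-effectivity of $K_{X''} + \sum_i a_i\,\psi^\ast L_i$ then forces $G = 0$, hence $(X,0)$ is canonical. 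Once this is in place, the paper also takes a slightly different route to the final identification: instead of passing to an ample model and invoking an ample Kobayashi--Ochiai statement, it computes the numerical polynomial $\chi(X, -tL_i)$ directly via Kawamata--Viehweg vanishing, matches it with $\chi(\mathbb{P}^n, \mathcal{O}_{\mathbb{P}^n}(-t))$, and then applies the cited theorem of Kobayashi--Ochiai. Your ample-model route could likely be completed along similar lines (it is essentially what the paper does in Step~1 of the proof of Theorem~\ref{gen.mukai.thm}), but the missing singularity argument must be supplied first in either approach.
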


\begin{proof}
By Theorem~\ref{numerical Fujita type}, it follows that 
$\sum_{i=1}^k a_i=n+1$ 
and that
\begin{align*}
0\equiv (n+1)(K_X+\sum_{i=1}^ka_iL_i)= \sum_{i=1}^k a_i(K_X+(n+1)L_i).    
\end{align*}
By Proposition \ref{weak numerical Fujita type},  $K_X+(n+1)L_i$ is pseudo-effective. Thus, we conclude that  $K_X+(n+1)L_i \equiv 0$ for all $i$. 

We now claim that 
$(X, 0)$
is canonical.
First, the existence of the decomposition 
$-K_X\equiv \sum_{i=1}^ka_iL_i$
implies that 
$K_X$
is 
$\mathbb Q$-Cartier.
Moreover, 
if $(X, 0)$ had worse than canonical singularities, then taking first a dlt modification 
$\psi_1 \colon X' \to X$
of 
$(X, 0)$, 
followed up by a terminalization 
$\psi_2 \colon X'' \to X'$
of 
$(X', (1-\epsilon)\mathrm{Exc}(\psi_1))$,
for some $0< \epsilon \ll 1$, 
we would obtain a partial resolution 
$\psi \coloneqq \psi_2\circ \psi_1 \colon X'' \to X$
together with an effective 
$\mathbb R$-divisor 
$G$
on 
$X''$
such that
\[
K_{X''}+ \psi^\ast \sum_{i=1}^ka_iL_i +G
= \psi^\ast(K_X+\sum_{i=1}^ka_iL_i) \equiv 0.
\]
But then $G=0$, which implies that $(X, 0)$ has canonical singularities, otherwise that would contradict Proposition~\ref{weak numerical Fujita type} applied to 
$K_{X''}+  \sum_{i=1}^ka_i\psi^\ast L_i$.

Hence $(X, 0)$ is canonical and 
$K_X+(n+1)L_i \equiv 0$ for all $i$.
In particular, that implies that all $L_i$ are semiample and $X$ is weak log Fano;
thus, $\chi(\mathcal O_X)=1$.
We can then consider the numerical polynomials
\[
P_i(t) \coloneqq \chi(X, -tL_i),
\]
see \cite[Corollary 3]{fulton.hrr}.
The polynomials
$P_i(t)$ 
have degree 
$n$;
moreover, 
for any integer $l$, 
$K_X+lL_i \equiv (l-n-1)L_i$.
Thus, the Kawamata--Viehweg vanishing theorem implies that 
\begin{align*}
    \chi(X, -lL_i)=
    (-1)^n\chi(X, K_X+lL_i)=
    \begin{cases}
        1 
        &
        \text{for }
        l=0, 
        \\
        0
        &
        \text{for }
        1\leq l\leq n.
    \end{cases}
\end{align*}

Hence, as
$P_i(l)=\chi(\mathbb P^n, \mathcal O_{\mathbb P^n}(-l))$, 
for all $0 \leq l \leq n$, 
then 
$P_i(t)=\chi(\mathbb P^n, \mathcal O_{\mathbb P^n}(-t))$
as numerical polynomials in the indeterminate $t$.
Finally, 
\cite[Theorem 1.1]{kooc}
implies the sought conclusions.
\end{proof}

Finally, we prove also a birational version of the previous result which holds generalized pairs.

\begin{corollary}\label{generalized complexity-cor}
Let $(X,B+M)$ be an $n$-dimensional generalized pair such that $K_X+B+M\equiv 0$.
Assume that the b-divisor $M$ descends to a $M_{X'}$ on a higher birational model $\varphi \colon  X' \to X$.

If there exists a decomposition $M'= \sum_{i=1}^{k} a_i M'_i+N'$ such that for all $1\leq i \leq k$, $a_i \in \mathbb{R}_{>0}$ and $M'_i$ is a nef and big Cartier divisor, and $N'$ is a nef $\mathbb{Q}$-Cartier divisor, then $\sum_{i=1}^{k}a_{i} \leq n+1$.
Moreover, if the equality holds, then  $B=0$ and $N'\equiv 0$. 
\end{corollary}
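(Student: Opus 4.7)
The plan is to pull back the trivialization $K_X + B + M \equiv 0$ to the birational model $X'$ on which the b-divisor $M$ descends, and reduce the corollary to a direct application of Theorem~\ref{numerical Fujita type} and Corollary~\ref{numerical Fujita type-cor}. After replacing $X'$ by a resolution if necessary---an operation that preserves nefness and bigness of the Cartier divisors $M'_i$ and nefness of the $\mathbb{Q}$-Cartier $N'$---we may assume $X'$ is smooth. Then the $\mathbb{R}$-Cartier divisor $B_{X'}$ defined by
\[
K_{X'} + B_{X'} + M_{X'} = \varphi^{\ast}(K_X + B + M)
\]
satisfies $\varphi_{\ast} B_{X'} = B$, and substituting $M_{X'} = \sum_i a_i M'_i + N'$ yields $K_{X'} + \sum_i a_i M'_i \equiv -B_{X'} - N'$.

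Assuming $\sum_i a_i \geq n+1$, Theorem~\ref{numerical Fujita type} applied on the $n$-dimensional $X'$ with the nef and big Cartier divisors $M'_i$ gives that $-B_{X'} - N'$ is pseudo-effective on $X'$; strict inequality $\sum_i a_i > n+1$ upgrades this to bigness via Corollary~\ref{numerical Fujita type-cor}. I would then push forward along $\varphi_{\ast}$: the Weil $\mathbb{R}$-divisor $-B - \varphi_{\ast} N'$ is pseudo-effective (resp.\ big) on $X$ in the sense of Definition~\ref{def:pseudoeffective.Weil}, while $\varphi_{\ast} N'$ is pseudo-effective on $X$ as well---indeed, for any ample Cartier $H$ on $X$ and $\epsilon > 0$, $N' + \epsilon \varphi^{\ast} H$ is nef and big on $X'$, so by the projection formula $\varphi_{\ast} N' + \epsilon H = \varphi_{\ast}(N' + \epsilon \varphi^{\ast} H)$ is big as a Weil divisor. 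Combined with $B \geq 0$, the class $B + \varphi_{\ast} N'$ is pseudo-effective. If $\sum_i a_i > n+1$, summing the big class $-B - \varphi_{\ast} N'$ with the pseudo-effective class $B + \varphi_{\ast} N'$ would exhibit $0$ as a big Weil $\mathbb{R}$-divisor, impossible on a projective variety of positive dimension; hence $\sum_i a_i \leq n+1$.

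For the equality case $\sum_i a_i = n+1$, only pseudo-effectivity of $-B - \varphi_{\ast} N'$ is available, so I would intersect with $H^{n-1}$ for $H$ ample Cartier on $X$: both $(-B - \varphi_{\ast} N') \cdot H^{n-1} \geq 0$ and $(B + \varphi_{\ast} N') \cdot H^{n-1} \geq 0$, forcing both intersections to vanish. Since $B$ is effective, $B \cdot H^{n-1} = 0$ yields $B = 0$, and we obtain $\varphi_{\ast} N' \cdot H^{n-1} = 0$ for every ample $H$. By the projection formula, $N' \cdot (\varphi^{\ast} H)^{n-1} = 0$ on $X'$. Since $N'$ is nef and $\varphi^{\ast} H$ is nef and big, the standard characterization of equality in Kleiman-type positivity for nef and big classes forces $N'$ to be numerically supported on the augmented base locus of $\varphi^{\ast} H$, which coincides with $\mathrm{Exc}(\varphi)$ for $H$ very ample. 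A nef divisor on $X'$ numerically supported on exceptional divisors is then forced to be numerically trivial by the negativity lemma applied to $\varphi$, giving $N' \equiv 0$.

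The main obstacle I anticipate is the careful handling of Weil $\mathbb{R}$-divisors on the possibly non-$\mathbb{Q}$-factorial variety $X$---in particular, verifying that pushforward preserves bigness and pseudo-effectivity in the sense of Definition~\ref{def:pseudoeffective.Weil}, and that intersections with $H^{n-1}$ detect these numerical properties correctly---together with the final step deducing $N' \equiv 0$, which combines Kleiman-type positivity for nef and big classes with the negativity lemma.
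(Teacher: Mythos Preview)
Your approach is correct in outline and closely parallels the paper's, with one organizational difference and one genuine soft spot.

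For the inequality $\sum a_i \leq n+1$, you push forward to $X$ and contradict $0$ being big as a Weil $\mathbb{R}$-divisor; this works but requires exactly the Weil-divisor technicalities you flag (pushforward preserving bigness and pseudo-effectivity in the sense of Definition~\ref{def:pseudoeffective.Weil}). The paper's argument is shorter because it stays on $X'$: decompose $B' = B'^{+} - B'^{-}$ with $B'^{\pm} \geq 0$ having no common components, so that $B'^{-}$ is effective and $\varphi$-exceptional (since $\varphi_{*}B' = B \geq 0$). Then $K_{X'} + B'^{+} + M' \equiv B'^{-}$, and Corollary~\ref{numerical Fujita type-cor} makes the left side big, while an effective $\varphi$-exceptional divisor is never big. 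This avoids all questions about Weil divisors on the possibly non-$\mathbb{Q}$-factorial $X$.

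For the equality case, your deduction of $B=0$ via intersecting with $H^{n-1}$ matches the paper's reasoning. Your route to $N' \equiv 0$ is where the argument is weakest: the assertion that $N' \cdot (\varphi^{*}H)^{n-1} = 0$ forces $N'$ to be ``numerically supported on the augmented base locus of $\varphi^{*}H$'' is not a standard result and does not obviously hold as stated for a nef divisor against a merely nef and big class. The paper instead observes that once $B=0$, pushing forward the pseudo-effective class $K_{X'} + \sum a_i M'_i$ shows that $-\varphi_{*}N'$ is pseudo-effective; combined with $\varphi_{*}N'$ pseudo-effective this yields $\varphi_{*}N' \equiv 0$, and then the negativity lemma (using that $N'$ is $\varphi$-nef) forces $N' \equiv 0$. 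This is the cleaner packaging of the final step.
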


\begin{proof}
We apply Corollary \ref{numerical Fujita type-cor} to $X'$. Assume that $\sum_{i=1}^{k}a_{i} >n+1$.
We see that $\varphi^*(K_X+B+M)=K_{X'}+B'+M'\equiv 0$.
Take effective divisors $B'^+, B'^-$ without common components such that $B'=B'^+-B'^-$.
Then, we see that $B'^-$ is $\varphi$-exceptional.

By Corollary \ref{numerical Fujita type-cor}, $K_{X'}+B'^+ +M'$ is big.
On the other hand, by construction, we have $K_{X'}+B'^+ +M' \equiv B'^-$, which is $\varphi$-exceptional.
This is a contradiction.

Now assume that $\sum_{i=1}^{k}a_{i} = n+1$.  
Since $K_{X'}+ \sum_{i=1}^{k} a_i M'_i$ is pseudo-effective by Theorem \ref{numerical Fujita type} and $K_X + B + M \equiv 0$ by definition, we have  
\[
\varphi_* \left(K_{X'} + \sum_{i=1}^{k} a_i M'_i\right) \equiv 0, \quad B = 0, \quad \text{and} \quad \varphi_*N' \equiv 0.
\]  
In particular, it follows from the negativity lemma that $N' \equiv 0$.
\end{proof}

\section{Proof of  Conjecture \ref{Mukai type conjecture}}\label{induction:proof} 

Before discussing a proof of Conjecture 
\ref{Mukai type conjecture}, 
we first introduce some notation.

Let us recall that for a normal projective variety 
$X$
we denote
$\rho(X) \coloneqq \dim_{\mathbb R} \NS(X)$, 
where 
$\NS(X)$ 
denotes the 
$\mathbb R$-vector 
space of 
$\mathbb R$-Cartier 
divisors on 
$X$ 
modulo numerical equivalence.

\begin{definition}
Let $X$ be a normal variety.
Let $D_1, \dots, D_s$ be $\mathbb R$-Cartier
divisors
on 
$X$.
We define
\begin{align*}
&\Sigma(D_1, \dots, D_s)
 \coloneqq 
{\rm span}([D_1], \dots, [D_s])
\subset \NS(X),
\quad
\text{and}
\\
&\sigma(D_1, \dots, D_s)  \coloneqq  \dim_\mathbb{R} \Sigma(D_1, \dots, D_s).
\end{align*}
\end{definition}

We are now ready to prove our main result of this section, which is a refined version of 
Conjecture
\ref{Mukai type conjecture}.

\begin{theorem}
\label{gen.mukai.thm}
Let $X$ be a 
normal projective variety.
Assume that there exists a generalized klt log Calabi--Yau pair
$(X, \Delta+M)$
satisfying the following properties:
\begin{enumerate}
\item 
\label{condition:nef_cartier}
$M=\sum_{i=1}^k m_iM_i$, 
and for all 
$i=1, \dots, k$, 
$M_i$ 
is nef and Cartier and $M_i \not\equiv 0$;
and
\item 
\label{condition:big}
$\Delta+\sum_{i=1}^k m_iM_i$
is big.

\end{enumerate}
Then,
\begin{align}
\label{gen.mukai.thm.eqn}
\sum_{i} m_i \leq 
\min (\dim X+\sigma(M_1 \dots, M_k), 2\dim X).
\end{align}
Moreover, if equality holds in \eqref{gen.mukai.thm.eqn},
then the following properties hold:
\begin{enumerate}[label=(\roman*)]
\item 
\label{gen.mukai.thm.2nd.ineq}
$\dim X + \sigma(M_1, \dots, M_k) \leq 2\dim X$;

\item 
\label{gen.mukai.thm.cond.=}
$X$  
is a product of projective spaces, 
$X=\prod_{j=1}^l \mathbb P^{t_j}$.
We denote by $pr_{j}$
the projection onto the 
$j$-th 
factor of 
$X$;
\item 
\label{gen.mukai.thm.cond.delta.null}
$\Delta=0$ 
and 
$K_X+
\sum_{i=1}^k m_iM_i \equiv  0$;
\item 
\label{gen.mukai.thm.cond.boundary}
for all 
$i=1, \dots, k$,
there exists 
$\bar{j}_i \in \{1, \dots, l\}$
such that 
$M_i = {\rm pr_{\bar{j}_i}}^\ast H_{\bar{j}_i}$, 
where 
$H_{\bar{j}_i}$
denotes the hyperplane section of the 
$\bar{j}_i$-th 
factor of 
$X$.
\end{enumerate}
\end{theorem}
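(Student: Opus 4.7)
The plan is to induct on $n = \dim X$, reducing to Theorem~\ref{Fano-case}/Corollary~\ref{generalized complexity-cor} when every $M_i$ is big, and otherwise using a non-big $M_i$ to induce a contraction and applying induction on both fiber and base via the generalized canonical bundle formula. After passing to a small $\mathbb{Q}$-factorialization, which preserves all the hypotheses, I may assume $X$ is $\mathbb{Q}$-factorial; Lemma~\ref{lemma:fano_type} together with assumption~(2) then shows that $X$ is of Fano type, hence a Mori dream space in which every nef Cartier divisor is semiample. If every $M_i$ is big, Corollary~\ref{generalized complexity-cor} (taking the identity as the higher model and no residual nef term) immediately yields $\sum_i m_i \leq n+1 \leq n + \sigma(M_1,\dots,M_k)$; equality forces $\sum_i m_i = n+1$ and $\Delta = 0$ by the same corollary, after which Theorem~\ref{Fano-case} identifies $X \cong \mathbb{P}^n$ with every $M_i \cong \mathcal{O}_{\mathbb{P}^n}(1)$, matching \ref{gen.mukai.thm.2nd.ineq}--\ref{gen.mukai.thm.cond.boundary} with $l = 1$.

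Otherwise some $M_i$, say $M_1$, is not big; since $M_1$ is semiample, it defines a contraction $\phi \colon X \to Y$ with $0 < \dim Y < n$. I partition $\{1,\dots,k\} = I \sqcup J$ with $I = \{i : M_i \equiv_\phi 0\}$; Lemma~\ref{lemma.pullback} then provides nef Cartier divisors $N_i$ on $Y$ with $M_i = \phi^*N_i$ for each $i \in I$. Restricting to a general fiber $F$, adjunction produces a gklt log Calabi--Yau pair $(F, \Delta|_F + \sum_{j \in J} m_j M_j|_F)$ whose boundary-plus-moduli stays big (the pullback part $\sum_{i \in I} m_i \phi^*N_i$ restricts trivially while $\Delta+M$ was big on $X$), so the inductive hypothesis on $F$ gives $\sum_{j \in J} m_j \leq \min(\dim F + \sigma_F, 2\dim F)$ with $\sigma_F = \sigma(M_j|_F : j \in J)$. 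Dually, applying the generalized canonical bundle formula to $\phi$ and invoking Lemma~\ref{big.anticanonical.lemma} to retain bigness on the base produces a gklt log Calabi--Yau pair on $Y$ in which $\sum_{i \in I} m_i N_i$ appears as a nef Cartier decomposition of (part of) the anticanonical class; the inductive hypothesis on $Y$ then gives $\sum_{i \in I} m_i \leq \min(\dim Y + \sigma_Y, 2\dim Y)$ with $\sigma_Y = \sigma(N_i : i \in I)$. Since $\phi^* \colon \NS(Y)_{\mathbb R} \hookrightarrow \NS(X)_{\mathbb R}$ is injective and the restrictions $M_j|_F$ ($j \in J$) are independent modulo $\phi^*\NS(Y)_{\mathbb R}$, one has $\sigma_Y + \sigma_F \leq \sigma(M_1,\dots,M_k)$; summing the two bounds yields \eqref{gen.mukai.thm.eqn}.

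In the equality case, both sub-bounds must saturate, so by induction $Y$ and $F$ are products of projective spaces with the $N_i$'s and $M_j|_F$'s being single-factor hyperplane pullbacks, and the boundary contributions vanish, forcing $\Delta = 0$ and \ref{gen.mukai.thm.cond.delta.null}. The principal remaining task --- and the main technical obstacle --- is to upgrade $\phi \colon X \to Y$ from a fibration with product base and product fibers into a genuine global product decomposition of $X$ under which each $M_i$ acquires the form prescribed in item~\ref{gen.mukai.thm.cond.boundary}. I would tackle this by observing that $M_1 = \phi^*N_1$ with $N_1$ pulled back from a single $\mathbb{P}^{t_l}$-factor of $Y$, so $\phi$ factors through the projection $Y \to \mathbb{P}^{t_l}$; iterating the argument using each non-big $M_i$ in turn, and invoking Kobayashi--Ochiai/Fujita-type rigidity for $\mathbb{P}^r$-fibrations carrying a trivial log Calabi--Yau polarization, should trivialize every occurring Mori fiber space and split $X$ completely into its projective-space factors. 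Proving this rigidity/triviality statement, which marries the inductive product structure on base and fiber with the log Calabi--Yau data, is the key technical hurdle.
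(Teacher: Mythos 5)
Your overall approach mirrors the paper's: dispose of the all-big case via Theorem~\ref{Fano-case} and Corollary~\ref{generalized complexity-cor}, otherwise pass to a fiber-type contraction and induct on base and fiber. But your specific choice of contraction opens a genuine gap.

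You take $\phi\colon X\to Y$ to be the Iitaka fibration of a non-big $M_1$ and set $I=\{i : M_i\equiv_\phi 0\}$, $J$ its complement. The trouble is that for $j\in J$ one can still have $M_j|_F\equiv 0$ on a general fiber $F$: a nef $M_j$ can have positive degree only on curves of special fibers, equivalently on a $\phi$-birational extremal ray of the relative Mori cone. When this happens, $M_j|_F$ violates the requirement ``$M_i\not\equiv 0$'' in the inductive hypothesis applied to $F$, so $m_j$ drops out of the bookkeeping (it appears in neither the base sum over $I$ nor the fiber sum over those $j$ with $M_j|_F\not\equiv 0$) and the bound $\sum_i m_i \le \minimo$ is not obtained. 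The paper circumvents this by choosing $\mathrm{cont}_{G_1}$, where $G_1$ is a face of $\mathrm{Nef}(X)$ of \emph{maximal} dimension among those containing $M_1$ and inducing a fiber-type contraction; maximality is exactly what lets Step~3 of the proof conclude that $M_i|_{F_1}\equiv 0$ forces $M_i$ to be $\mathrm{cont}_{G_1}$-trivial (else the relative Iitaka fibration of $M_i$ over $Y_{G_1}$ produces a strictly larger fiber-type face containing $M_1$, a contradiction). Your choice---the Iitaka fibration of $M_1$---is the \emph{minimal} face containing $M_1$, the opposite extreme. Maximality is also what forces $\rho(F_1)=1$, hence $F_1\simeq\mathbb{P}^\nu$, in the equality case (Step~8); with your choice the fiber may itself split into several projective-space factors, which makes the final gluing strictly harder.

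The second gap is the one you flag yourself: upgrading the fibration into a global product decomposition of $X$ carrying the prescribed hyperplane pullbacks. This is the core of the paper's equality analysis (Step~9): after seeing that both boundary and moduli parts of the canonical bundle formula for $\mathrm{cont}_{G_1}$ vanish in the equality case, one invokes Ambro's structural theorem to produce an isotrivial splitting over a big open subset of $Y_{G_1}$, uses that $Y_{G_1}$ is simply connected to trivialize the associated Galois cover, shows the resulting birational map $X\dashrightarrow \mathbb{P}^\nu\times Y_{G_1}$ is an isomorphism in codimension one by a generalized-terminal discrepancy comparison, and then extends it to a genuine isomorphism because products of projective spaces admit no nontrivial small $\mathbb{Q}$-factorial modifications. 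This chain of arguments is not a rigidity statement one can simply cite; it must be supplied and is where most of the real work in the equality case lies.
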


\begin{remark}
\label{rem.eq.gen.mukai.thm}
It is an immediate consequence of the above theorem that if equality holds in 
\eqref{gen.mukai.thm.eqn}, 
properties 
\ref{gen.mukai.thm.cond.=}
and 
\ref{gen.mukai.thm.cond.boundary}
in the above statement together imply that 
$M_1 \dots, M_k$ 
generate 
$\NS(X)$.

Moreover, if equality holds also in 
\ref{gen.mukai.thm.2nd.ineq}, 
then 
\ref{gen.mukai.thm.cond.=} readily implies that
$X \simeq (\mathbb P^1)^{\dim X}$.
\end{remark}

\begin{proof}
The theorem holds for $\dim X=1$: indeed, in that case $X \simeq \mathbb P^1$ and
\[
\sum_i m_i \leq 2=\dim X + \rho(X)=2\dim X.\]
Hence, we can and shall assume that $\dim X \geq 2$.

For the reader's convenience, we divide the proof into steps.

\medskip

{\bf Step 0.}
{\it In this step, we show that any nef $\mathbb Q$-Cartier divisor on $X$ is semiample.
We also show that such result holds also in the relative case.}

Lemma~\ref{lemma:fano_type} implies that 
$X$ 
is of Fano type; thus, any 
small $\mathbb Q$-factorialization 
$\pi \colon \tilde X \to X$
of 
$X$
is a Mori dream space.
In particular, for any nef $\mathbb Q$-Cartier divisor $N$ on $X$, $\pi^\ast N$ is semiample, which in turn implies that $N$ itself is semiample on $X$, cf. \cite[Lemma 2.1.13]{laz-pos-1}.

The same argument also works in the relative setting:
given a morphism $X \to Z$ and a relatively nef $\mathbb Q$-Cartier divisor $N'$ over $Z$, then $N'$ is relatively semiample over $Z$.

\medskip 

{\bf Step 1}.
{\it 
In this step, we show that 
the theorem holds when 
$\sigma(M_1, \dots, M_k)=1$, independently of the dimension of $X$.
}

Since 
$\sigma(M_1, \dots, M_k)=1$,
then 
$\dim X + \sigma(M_1, \dots, M_k) < 2\dim X$, 
as we have assumed that 
$\dim X \geq 2$.
Moreover, we are free to assume that the divisors 
$M_i$
are all proportional to each other in $\NS(X)$.
Hence, up to reordering the indices, we may assume that 
$K_X+\Delta+sM_1\equiv 0$, 
for some 
$s \geq \sum_{i=1}^k m_i$
and equality holds if and only if 
for all 
$1 \leq i,j \leq k$,
$M_i \equiv M_j$.
On the other hand, since $X$ 
is of Fano type, 
then 
$X$
is rationally connected;
thus
$h^1(X,\mathcal O_X)=0$, 
and
$\chi(X, \mathcal O_X)=1$, so that all
$M_i$
are linearly equivalent.

We now assume that 
$\sum_{i=1}^k m_i \geq  \dim X+1$ and
proceed to show that equality holds in the previous two inequalities and that conditions
\ref{gen.mukai.thm.cond.=}-\ref{gen.mukai.thm.cond.boundary}
in the theorem are satisfied.
It follows that
$s \geq \dim X+1$, 
and that
for $0 \leq l \leq \dim X$,	
\begin{align*}
-lM_1 \equiv K_X+\Delta+(s-l)M_1
\quad
\text{and}
\quad 
s-l >0.
\end{align*}

If 
$M_1$
is big,
we can proceed exactly as in the final part of the proof of Theorem~\ref{Fano-case}:
the Kawamata--Viehweg vanishing theorem implies that 
\begin{align*}
    \chi(X, \mathcal O_X(-lM_1))=
    \begin{cases}
        1 
        &
        \text{for }
        l=0, 
        \\
        0
        &
        \text{for }
        1\leq l\leq \dim X.
    \end{cases}
\end{align*}
It then follows that
$P(l) \coloneqq \chi(X, \mathcal O_X(-lM_1))$
is a numerical polynomial of degree $\dim X$,
and 
$P(l) \neq 0$
for 
$l\gg 1$, 
as
$M_1$
is big and nef.
Then 
$P(l)=\chi(\mathbb P^{\dim X}, \mathcal O(-l))$,
as these two polynomials attain the same values for all $\dim X+1$ integral values of 
$l \in [0, \dim X]$.
As
$M_1$
is big and semiample, see Step 0, there exists a birational contraction 
\[\gamma \colon X \to X^\circ \coloneqq 
{\rm Proj}
\left(
\sum_{l\geq 0}
H^0(X,\mathcal O_X(lM_1))
\right).
\]
As
$P(l)=\chi(\mathbb P^{\dim X}, \mathcal O(-l))$
and 
$M_1$ is nef and big, 
then
$M_1^{\dim X}=1$
and
$P(-1)=h^0(X,\mathcal O_X(M_1))=
h^0(\mathbb P ^{\dim X},\mathcal O(1))=\dim X+1$.
Furthermore, Lemma~\ref{lemma.pullback} implies that there exists an ample Cartier divisor
$L$
on 
$X^\circ$
such that 
$M_1 \sim \gamma^\ast L$.
Hence, 
$L^{\dim X}=M_1^{\dim X}=1$
and 
$h^0(X^\circ, \mathcal O_{X^\circ}(L))=
h^0(X, \mathcal O_X(M_1))
=\dim X+1$.
By
\cite[Theorem~1.1]{kooc}, 
$X^\circ \simeq \mathbb P ^{\dim X}$, 
$L$
is a hyperplane section.
Hence, 
$s =\dim X+1 \geq \sum_{i=1}^k m_i$,
since, as $\gamma$ is a crepant birational map
for the generalized pair $(X,\Delta+(\dim X+1)M_1)$, we have 
\begin{align}
\label{equiv:crepant.gamma}
   0\equiv 
   K_X+\Delta+(\dim X+1)M_1
   \equiv 
   \gamma^\ast 
    (K_{\mathbb P^{\dim X}}+(\dim X+1)L).
\end{align}
We consider
$(\mathbb P^{\dim X}, 0 + (\dim X+1)L)$
as a generalized pair with 
boundary
$0$
and moduli b-divisor given by the Cartier closure of 
$(\dim X+1)L$.
This pair has generalized terminal singularities and 
\eqref{equiv:crepant.gamma} 
implies that 
$\gamma$ 
is crepant for the generalized pairs
\[
(X, \Delta + (n+1)M_1), 
\quad
(\mathbb P^{\dim X}, 0 + (\dim X+1)L).
\]
Hence, 
$\Delta=0$, 
and 
$\gamma$ 
is an isomorphism
since 
$(\mathbb P^{\dim X}, 0 + (\dim X+1)L)$ 
is $\mathbb{Q}$-factorial terminal.

If 
$M_1$
is not big, 
we can consider 
the Iitaka fibration 
\[
f \colon X \to T, \quad T \coloneqq \mathrm{Proj}(\oplus_{l \geq 0} H^0(lM_1))
\]
of 
$M_1$; 
in particular, $\dim T < \dim X$.
Thus
$M_1\sim f^\ast L_1$, 
for a suitable ample Cartier divisor 
$L_1$
on 
$T$, by
Lemma~\ref{lemma.pullback}.
Then, by the canonical bundle formula, on $T$ there exists 
$\Delta_T$ 
such that 
$(T, \Delta_T)$
is klt and 
$K_T+\Delta_T+sL_1 \sim_{\mathbb R} 0$.
As 
$s\geq n+1 > \dim T+1$, 
this contradicts 
Theorem~\ref{Fano-case}; 
thus, 
$M_1$ 
can only be big and we are done by the previous paragraph.

\medskip

From here onwards, we shall work by induction on 
$(\dim X, \rho(X))$, 
using the lexicographic order.

Step $1$ implies that the conclusion of the theorem holds when 
$\rho(X)=1$
completing the proof of the base of the induction -- the case 
$\dim X=1$ 
was dealt with at the start of the proof.

\medskip

The remainder of the proof will focus on proving the inductive step.

\medskip

{\bf Step 2}.
{\it
In this step, we show that if
all 
$M_i$ 
are nef and big, then the theorem holds}.

This is an immediate consequence of Corollary
\ref{generalized complexity-cor}
and Theorem
\ref{Fano-case}:
if 
\begin{align}
\label{gen.mukai.thm.ineq.allbig}
\sum m_i \geq \minimo \geq \dim X+1,    
\end{align}
then the two results just cited imply that equality must hold everywhere in 
\eqref{gen.mukai.thm.ineq.allbig}
and, moreover, 
$X\simeq \mathbb P^{\dim X}$, and $\Delta =0$.

\medskip

By Step 2, we can then assume that at least one of the 
$M_i$, 
say 
$M_1$, 
is nef and not big.
In view of this, we choose
$G_1$ 
to be a face of 
${\rm Nef}(X)$ 
of maximal dimension among those that contain 
$M_1$
and such that the contraction 
${\rm cont}_{G_1} \colon X \to Y_{G_1}$
associated to 
$G_1$
is of fiber type, i.e., $\dim X > \dim Y_{G_1}$.
We shall denote by  
$F_1$ 
a general fiber of 
${\rm cont}_{G_1}$.

\medskip

{\bf Step 3}.
{\it
In this step we show that 
for all 
$1 \leq i\leq k$, 
if 
$M_i \vert_{F_1} \equiv 0$, 
then there exists a nef Cartier divisor
$N_i$ 
on 
$Y_{G_1}$
such that 
$M_i=
{\rm cont}_{G_1}^\ast N_i$.
}

Fix $1 \leq i \leq k$
and assume that 
$M_i \vert_{F_1} \equiv 0$.\footnote{We do not need to show anything for 
$i=1$, 
since by Step 1, we are assuming that 
$M_1 \sim_{Y_{G_1}} 0$.}
If 
$M_i 
\equiv_{{\rm cont}_{G_1}} 
0$
then the conclusion follows from 
Lemma~\ref{lemma.pullback}.

We now assume that 
$M_i 
\not \equiv_{{\rm cont}_{G_1}}  
0$.
By Step 0, 
$M_i$ 
is 
relatively semiample over 
$Y_{G_1}$
and its relative Iitaka fibration induces a factorization
\begin{align}
\label{factoriz.contg1}
\xymatrix{
X \ar[rr] \ar[dr]_{{\rm cont}_{G_1}}& & Y'_i\ar[dl]
\\
& Y_{G_1}.&
}
\end{align}
As 
$M_i \vert_{F_1} \equiv 0$, 
then 
$Y'_i \to Y_{G_1}$
is birational (but not the identity as 
$M_i 
\not \equiv_{{\rm cont}_{G_1}}  
0$), 
which contradicts the maximality of 
$\rho(Y_{G_1})$
among the faces of 
$\mathrm{Nef}(X)$
containing 
$M_1$
and inducing a fiber type contraction.

\medskip

We now reorder the indices 
$\{1, \dots, k\}$
as follows:
For all
$1 \leq i \leq i_0$, 
$M_i \vert_{F_1} \equiv 0$,
whereas if 
$i_0< i \leq k$,
$M_i \vert_{F_1} \not \equiv 0$.
Then, 
$i_0 \geq 1$ 
and for 
$i\leq i_0$, 
$M_i =
{\rm cont}_{G_1}^\ast N_i$, 
where 
$N_i$
is nef and Cartier on 
$Y_{G_1}$.

\medskip

{\bf Step 4}.
{\it 
In this step, we show that 
\begin{align}
\label{diseq.step3}
\sum_{i=1}^k m_i 
&
\leq
\min (
\dim F_1 + 
\sigma(M_{i_0+1}\vert_{F_1}, \dots, M_k\vert_{F_k}), 
2\dim F_1) \\
\nonumber
&
+
\min (
\dim Y_{G_1} + 
\sigma(N_{1}, \dots, N_{i_0}), 
2\dim Y_{G_1}). 
\end{align}
}

It suffices to show that both the following two inequalities hold at once
\begin{align}
\label{diseq.fiber}
   \sum_{i=i_0+1}^{k} m_i
& \leq 
\min (
\dim F_1 + 
\sigma(M_{i_0+1}\vert_{F_1}, \dots, M_k\vert_{F_k}), 
2\dim F_1), 
\\
\label{diseq.base}
    \sum_
    {j=1}^{i_0}  m_j
& \leq 
\min (
\dim Y_{G_1} + 
\sigma(N_{1}, \dots, N_{i_0}), 
2\dim Y_{G_1}).
\end{align}

First, we show that 
\eqref{diseq.base}
holds.
Since each $M_i$ is semiample, see Step 0, 
we can choose  an effective $\mathbb{R}$-divisor  $\Xi$ such that $\Xi \sim_{\mathbb R} \sum_{i=i_0+1}^k m_iM_i$ and $(X,\Delta+\Xi)$ is a klt pair.
Then, applying the canonical bundle formula for $(X,\Delta+\Xi)$ and $\mathrm{cont}_{G_1}$,
there exists an effective divisor 
$\Gamma'$
on 
$Y_{G_1}$ such that 
\begin{align}
\label{eqn.cbf.base}
0
\equiv_{{\rm cont}_{G_1}} 
K_X+\Delta+
 \sum_{i=i_0+1}^{k} m_i M_i 
 \sim_{\mathbb R} K_X+\Delta+\Xi
 \sim_\mathbb{R}
{\rm cont}^\ast_{G_1}(K_{Y_{G_1}}+\Gamma'), 
\end{align}
and
$(Y_{G_1}, \Gamma')$
is klt.
Hence, 
$K_{Y_{G_1}}+\Gamma'+\sum_{j=1}^{i_0} m_j N_j \equiv 0$ 
and 
\eqref{diseq.base}
follows by induction on the dimension once we note that 
$-K_{Y_{G_1}}$
is big, which is a direct consequence of 
Lemma~\ref{big.anticanonical.lemma}.

To show that
\eqref{diseq.fiber}
holds, 
it suffices to note that
\begin{align*}
0\equiv
(K_X+\Delta+
\sum_{i=1}^k m_iM_i ) 
\vert_{F_1} 
\equiv &
(K_X+\Delta+
\sum_{i=i_0+1}^{k} m_iM_i ) 
\vert_{F_1} 
\equiv
\\
\equiv &
K_{F_1}+\Delta\vert_{F_1}+
\sum_{i=i_0+1}^{k} m_iM_i\vert_{F_1}.
\end{align*}
The divisors 
$M_i\vert_{F_1}$
are Cartier and nef.
Moreover, 
it is immediate to verify that
$\Delta\vert_{F_1}+
\sum_{i=i_0+1}^{k} m_iM_i\vert_{F_1}
\equiv 
\Delta\vert_{F_1}+
\sum_{i=1}^{k} m_iM_i\vert_{F_1}$
is big -- simply because the restriction of a big divisor to a general fiber of a morphism is still big.
Hence, the sought inequality follows by induction on the dimension.

\medskip

\medskip

{\bf Step 5}.
{\it 
In this step, we show that
\begin{align}
\label{eqn.step4}
& \sigma(N_{1}, \dots, N_{i_0}) +
\sigma(M_{i_0+1}\vert_{F_1}, \dots, M_k\vert_{F_1})
=
\sigma(M_1, \dots, M_k), 
\ \text{and}\\
\label{diseq:chained}
\sum_{i=1}^k m_i
\leq 
&
\min (
\dim F_1 + 
\sigma(M_{i_0+1}\vert_{F_1}, \dots, M_k\vert_{F_k}), 
2\dim F_1) 
\\
\nonumber
+
&
\min (
\dim Y_{G_1} + 
\sigma(N_{1}, \dots, N_{i_0}), 
2\dim Y_{G_1})
\\
\nonumber
\leq
&
\minimo.
\end{align}
In particular, we deduce that 
inequality  
\eqref{gen.mukai.thm.eqn}
holds.
}

The inequalities 
\eqref{diseq.step3}, 
\eqref{eqn.step4} 
together imply 
\eqref{diseq:chained}, 
since 
$\dim X= \dim F_1 + \dim Y_{G_1}$.
Moreover, 
\eqref{gen.mukai.thm.eqn}
immediately follows from 
\eqref{diseq:chained}.
Hence, it suffices to prove
\eqref{eqn.step4}.

Since each 
$M_i$
descends to 
$N_i$
on 
$Y_{G_1}$, 
for $1 \leq i \leq i_0$,
then
\begin{align*}
\sigma(M_1, \dots, M_{i_0})
=
\sigma(N_1, \dots, N_{i_0}).
\end{align*}
Let us consider the morphism induced by restriction to a general fiber 
$F_1$ 
of 
${\rm cont}_{G_1}$
\begin{align*}
    r_{F_1} \colon 
    \NS(X)
    \to 
     & 
     \NS(F_1)
    \\
    [D]
    \mapsto
    &
    [D\vert_{F_1}].
\end{align*}
By construction,
$\ker r_{F_1} \supset \Sigma(M_1, \dots, M_{i_0})$.
Moreover, by the definition of 
$r_{F_1}$, 
${\rm Im}(r_{F_1}) \supset \Sigma (M_{i_0+1}\vert_{F_1}, \dots, M_{k}\vert_{F_1})$.
Thus, 
\eqref{eqn.step4} follows from 
the rank-nullity theorem 
for
$r_{F_1}\vert_{\Sigma(M_1, \dots, M_k)}$.

\medskip

From this point of the proof onwards, we shall assume that equality holds in 
\eqref{gen.mukai.thm.eqn}, 
and we shall prove that 
\ref{gen.mukai.thm.2nd.ineq}-\ref{gen.mukai.thm.cond.boundary}
in the statement of the theorem hold.

\medskip

{\bf Step 6}.
{\it In this step, we show that equality holds also in 
\eqref{diseq.step3}, and 
we illustrate the geometric consequences of this fact}.

If equality holds in 
\eqref{gen.mukai.thm.eqn}, 
then equality holds everywhere in
\eqref{diseq:chained} which implies the first part of the statement.
As equality holds in
\eqref{diseq.step3}, 
then it must hold in both
\eqref{diseq.fiber}
and 
\eqref{diseq.base}.
In such circumstance, as
$0< \dim F_1, \ \dim Y_{G_1} < \dim X$,
the inductive hypothesis implies that the following hold:
\begin{itemize}
    \item[a)]
$Y_{G_1}$
is a product of projective spaces
\begin{align*}
   Y_{G_1}
   \simeq
   \prod_{t=1}^g
   \mathbb P^{\mu_t}.
\end{align*}
We denote by 
${\rm pr}_{G_1, t}
\colon 
Y_{G_1} \to \mathbb P^{\mu_t}$
the projection onto the 
$t$-th factor;

    \item[b)]
the divisors
$N_j$, 
$j=1, \dots, i_0$,
generate 
$\NS(Y_{G_1})$,
$\Gamma' =
0$, 
and each 
$N_j$
is the pullback of a hyperplane section from one of the projective space factors of 
$Y_{G_1}$;
    \item[c)]
$F_1$ is a product of projective spaces
\begin{align*}
   F_1
   \simeq
   \prod_{s=1}^h
   \mathbb P^{\nu_s};
\end{align*}

    \item[d)]
the divisors
$M_i\vert_{F_1}$, 
$i=i_0+1, \dots, k$
generate
$\NS(F_1)$. 
Each $M_i|_{F_1}$, with $i \in \{i_0+1,\dots,k\}$
is the pullback of a hyperplane section from one of the projective space factors of 
$F_1$;
and,
    \item[e)]
$\Delta\vert_{F_1}=0$,
thus
$\Delta$
is 
${\rm cont}_{G_1}$-vertical;
moreover,
${\rm cont}_{G_1}({\rm Supp}(\Delta))$
has codimension at least 2 on $Y_{G_1}$, 
since $\Gamma'=0$;
      \item[f)]
as $\Gamma'=0$, 
$K_X+\Delta+
 \sum_{i=i_0+1}^{k} m_i M_i 
 \sim_{\mathbb R} 
{\rm cont}^\ast_{G_1}K_{Y_{G_1}}$;
        \item[g)]
the following two inequalities holds
\begin{align*}
& \dim F_1+\sigma(M_{i_0+1}\vert_{F_1}, \dots, M_{k}\vert_{F_1}) \leq 2\dim F_1, 
\quad
\text{and}, \\
& \dim Y_{G_1}+\sigma(N_{1}, \dots, N_{i_0}) \leq 2\dim Y_{G_1}    
\end{align*}
\end{itemize}

{\bf Step 7}.
{\it
In this step, we show that 
\ref{gen.mukai.thm.2nd.ineq}
holds}

This is an immediate consequence of g) above and 
\eqref{eqn.step4}.

\medskip

{\bf Step 8}
{\it
In this step, we show that 
$F_1$ is a projective space
and
\begin{align*}
\sigma(M_1, \dots, M_k)=1+\rho(Y_{G_1})=
g+1.
\end{align*}
}

As equality holds in 
\eqref{gen.mukai.thm.eqn},
and so also in \eqref{diseq.step3},  
then, by d)-e) in Steps 6, 
the numerical classes of the divisors
$M_{i_0+1}\vert_{F_1}, \dots, M_k\vert_{F_1}$
generate 
$\NS(F_1)$, 
and 
\begin{align*}
K_{F_1}+ 
\sum_{i=i_0+1}^{k} m_i M_{i}\vert_{F_1} 
\sim_{\mathbb R} 0.
\end{align*}

If 
$\rho(F_1)=1$, 
then 
$F_1 \simeq \mathbb P^\nu$, 
where 
$\nu+1 = \sum_{i=i_0+1}^{k} m_i$. 
Otherwise, 
$\rho(F_1)>1$,
and
the inductive hypothesis implies that 
each
of the $M_i\vert_{F_1}$, 
say 
$M_{i_0+1}\vert_{F_1}$,
lies in
$\partial {\rm Nef}(F_1) \setminus \{0\}$.
By Step 0,
$M_{i_0+1}$
is relatively semiample over 
$Y_{G_1}$ 
inducing a factorization as in
\eqref{factoriz.contg1}, with 
$\dim X>\dim Y'>\dim Y_{G_1}$
contradicting the maximality of 
$\rho(Y_{G_1})$ 
among fiber spaces.
Hence,
$\rho(F_1)=1$
and Step 1 implies that
$F_1 
\simeq 
\mathbb P^\nu$, 
with
$\nu=\dim X -\dim Y_{G_1}$.

Finally,
\eqref{eqn.step4}
implies that 
$\sigma(M_1, \dots, M_k)=1+\rho(Y_{G_1})$ 
since, 
by b) in Step 6
$\sigma(N_1, \dots, N_{i_0})=\rho(Y_{G_1})$`.
As 
$Y_{G_1}$
is a product of 
$g$
projective spaces, then 
$\rho(Y_{G_1})=g$.

\medskip

{\bf Step 9}.
{\it
In this step, we conclude the proof: namely, we show that
\ref{gen.mukai.thm.cond.=}-\ref{gen.mukai.thm.cond.boundary}
hold.}

From f) in Step 6, equality in 
\eqref{gen.mukai.thm.eqn} implies that
\[
K_X+\Delta+
\sum_{j=i_0+1}^k 
m_jM_j 
\equiv 
{\rm cont}^\ast_{G_1}
K_{Y_{G_1}}.
\]

As for all 
$i$, 
$M_i$
is semiample on 
$X$,
choosing 
$D_i\in \vert m_iM_i \vert_{\mathbb R}$
general, 
the log pair
$(X, \Delta + \sum_{j=i_0+1}^k D_j)$
is klt, 
and
\begin{align*}
K_X+\Delta+
\sum_{j=i_0+1}^k 
D_j 
\sim_{\mathbb R} K_{Y_{G_1}}.
\end{align*}
Thus, the boundary and moduli part in the canonical bundle formula for 
$K_X+\Delta+
\sum_{j=i_0+1}^k 
D_j $ 
over
$Y_{G_1}$ 
are (numerically) trivial. 
Hence, by standard results on the canonical bundle formula, see
\cite[Theorem 2.22]{DcS}, 
there exists a finite Galois cover
$\tau \colon \tilde Y \to Y_{G_1}$ which is 
\'etale in codimension one,
an open set 
$U \subset \tilde Y$,
and an isomorphism 
\begin{align}
\label{isom:CBF.isotrivial}
\left (
X, \Delta +
\sum_{j=i_0+1}^k D_j
\right ) 
\times_{Y_{G_1}}
U 
\simeq
\left (
\mathbb P^\nu, 
\sum_{j=i_0+1}^k E_j
\right ) 
\times
U
\end{align}
over $U$, 
where the 
$E_j$
are suitable 
$\mathbb R$-divisors on 
$\mathbb P^\nu$
and the pair
$(\mathbb P^\nu, \sum_{j=i_0+1}^k E_j)$
is klt.
As
$Y_{G_1}$
is smooth and simply connected by the inductive hypothesis,
then 
$\tau$
is an isomorphism.
Moreover, 
by \cite[Proposition 4.4]{Ambro.mod},
we can take 
$U$ 
to be a big open set of 
$Y_{G_1}$.
Hence there exists a birational map 
\[
\Phi \colon X \dashrightarrow 
\mathbb P^\nu \times Y_{G_1}.
\]

Let us notice that $\Phi$ is a birational contraction:
indeed, as $U$ is a big open set, $\Phi^{-1}$ cannot contract any divisor, 
as it is an isomorphism over the big open set 
$\mathbb P^\nu \times U \subset \mathbb P^\nu \times Y_{G_1}$.
Since
\[
0
\sim_\mathbb{R}
\Phi_\ast (K_X+\Delta+\sum_{i=1}^k m_iM_i)
\sim_\mathbb{R}
K_{\mathbb P^\nu \times Y_{G_1}}+ \sum_{i=1}^{i_0} m_i \mathrm{pr}_{\mathbb Y_{G_1}}^\ast N_i+
\sum_{j=i_0+1}^{k} m_j \mathrm{pr}_{\mathbb P^\nu}^\ast E_j
\]
then,
by \eqref{isom:CBF.isotrivial},
$\Phi$
is crepant with for the  generalized pairs
\[
(X, \Delta+\sum_{i=1}^k m_iM_i), 
(\mathbb P^\nu \times Y_{G_1}, 0+ \sum_{i=1}^{i_0} m_i \mathrm{pr}_{\mathbb Y_{G_1}}^\ast N_i+
\sum_{j=i_0+1}^{k} m_j \mathrm{pr}_{\mathbb P^\nu}^\ast E_j),
\] 
where the moduli part of the latter is the Cartier closure of 
\[\sum_{i=1}^{i_0} m_i \mathrm{pr}_{\mathbb Y_{G_1}}^\ast N_i+
\sum_{j=i_0+1}^{k} m_j \mathrm{pr}_{\mathbb P^\nu}^\ast E_j.
\]

Since 
$(\mathbb P^\nu \times Y_{G_1}, 0+ \sum_{i=1}^{i_0} m_i \mathrm{pr}_{\mathbb Y_{G_1}}^\ast N_i+
\sum_{j=i_0+1}^{k} m_j \mathrm{pr}_{\mathbb P^\nu}^\ast E_j)$
has generalized terminal singularities, then 
$\Phi$
cannot contract any divisors, as the exceptional divisors all have generalized log discrepancy $\leq 1$ for 
$(\mathbb P^\nu \times Y_{G_1}, 0+ \sum_{i=1}^{i_0} m_i \mathrm{pr}_{\mathbb Y_{G_1}}^\ast N_i+
\sum_{j=i_0+1}^{k} m_j \mathrm{pr}_{\mathbb P^\nu}^\ast E_j)$.
Thus, 
$\Phi$ is an isomorphism in codimension one.

As 
$\mathbb P^\nu \times Y_{G_1}$
is isomorphic to the product of projective spaces
$\mathbb P^\nu \times \prod_{t=1}^g
\mathbb P^{\mu_t}$, 
it
does not admit any small 
$\mathbb Q$-factorial 
modification, since
$    {\rm Nef}
    \left (
    \mathbb P^\nu
    \times
    \prod_{t=1}^g
\mathbb P^{\mu_t}
    \right )
    = {\rm Pseff}
    \left (
    \mathbb P^\nu
    \times
    \prod_{t=1}^g
\mathbb P^{\mu_t}
    \right )$.
Hence,
    $\Phi$
can be extended to an isomorphism 
$    \Phi 
    \colon 
    X 
    \to 
    \mathbb P^\nu 
    \times 
    Y_{G_1}$
    showing that
\ref{gen.mukai.thm.cond.=}
holds.

By
\eqref{isom:CBF.isotrivial},
$\Phi_\ast D_j= E_j$
for any 
$j=i_0+1, \dots, k$ 
and, thus,
$m_j M_j \sim_\mathbb R \Phi_1^\ast L_j$, 
where 
$\Phi_1$ denotes the composition of 
$\Phi$ 
with the projection of 
$\mathbb P^\nu 
\times 
Y_{G_1}$
onto the first factor and 
$L_j$ 
is an ample 
$\mathbb R$-divisor.
Then 
\eqref{diseq.base}
implies that 
for all 
$j=i_0+1, \dots, k$,
$L_j$ 
is a hyperplane section, which implies
\ref{gen.mukai.thm.cond.boundary}.

To see that also 
\ref{gen.mukai.thm.cond.delta.null}
holds, 
it suffices to notice that, by our construction, 
\begin{align*}
    K_X+\sum_{i=1}^k m_iM_i 
    \sim 
    \Phi^\ast 
    \left (
    K_{\mathbb P^\nu \times Y_{G_1}}
    +
    \sum_{i=1}^{i_0} 
    m_i 
    ({\rm pr}_{Y_{G_1}}^\ast N_i)
    +
    \sum_{j=i_0+1}^k
    m_j
    ({\rm pr}_{\mathbb P^\nu}^\ast L_j)
    \right )
    \sim_\mathbb{R} 0,
\end{align*}
where 
$ {\rm pr}_{Y_{G_1}} $, 
$ {\rm pr}_{\mathbb P^\nu} $
are the projections onto the two factors.
As also 
$ K_X+\Delta+\sum_{i=1}^k m_iM_i
\sim_\mathbb{R} 0$,
by hypothesis, 
then 
$\Delta =0$.
\end{proof}

We now conclude this section by proving the results stated in the introduction that directly depend upon those we have just proven.

\begin{proof}[Proof of Theorem~\ref{introthm:charct-p1n}]
By considering $(X,\sum_i a_iM_i)$ as a generalized klt pair, 
the stated result now follows from Theorem~\ref{gen.mukai.thm}, condition \ref{gen.mukai.thm.2nd.ineq} in its statement, cf. Remark~\ref{rem.eq.gen.mukai.thm}.
\end{proof}

\begin{proof}[Proof of Corollary~\ref{introcor:poly}]
The integral reflexive polytope $P\subset \mathbb{Q}^n$ corresponds to a klt Fano toric variety $T(P)$ (see, e.g.,~\cite[Theorem 8.3.4]{CLS11}).
The Minkowski sum decomposition
\[
P=\lambda_1P_1 \oplus \dots \oplus \lambda_r P_r, 
\]
leads to a decomposition of the anti-canonical divisor 
\[
-K_{T(P)} \sim \sum_{i=1}^r \lambda_i M(P_i)
\]
where each $M(P_i)$ is a nef Cartier divisor on $T(P)$ (see~\cite[Remark 2.6]{Bor93}).
By Theorem~\ref{gen.mukai.thm}, we know that
\begin{equation}\label{toric:nef-comp} 
\sum_{i=1}^r \lambda_i \leq \dim {\rm Cl}_{\mathbb{Q}}(T(P))+\dim(T(P)).
\end{equation} 
The dimension of the $\mathbb{Q}$-Class group ${\rm Cl}_{\mathbb{Q}}(T(P))$ is precisely the number 
of facets of $P$, denoted by $f$, minus $\dim(P)$ (see, e.g.,~\cite[Theorem 4.13]{CLS11}).
Thus, we conclude that 
\begin{equation}\label{ineq-facets} 
\sum_{i=1}^r \lambda_i \leq f. 
\end{equation} 
Furthermore, the previous equality holds if and only if the equality in~\eqref{toric:nef-comp} holds.
Thus, if~\eqref{ineq-facets} holds, then $T(P)$ must be isomorphic to a product of projective spaces. So, $P$ must be a product of simplices.
\end{proof}

\section{The total index of smooth del Pezzo surfaces}
\label{sec:total-index-dP}

In this section, we compute the total indices of smooth del Pezzo surfaces: $X=\mathbb{P}^2$, $X=\mathbb{P}^1\times\mathbb{P}^1$, and the blow-up
$X=\mathrm{Bl}_r(\mathbb{P}^2)$ of $r$ points in general position for $1\le r\le 8$. The degree of $X$ is $d \coloneqq K_X^2=(-K_X)^2$, which equals $9-r$ for $X=\mathrm{Bl}_r(\mathbb{P}^2)$. 
We will consider, simultaneously, the total index over the integers $\tau_X(\mathbb{Z})$ and the total index over the rationals $\tau_X(\mathbb{Q})$ as defined below. 

\begin{definition}
Let $R$ be either $\mathbb{Q}$ or $\mathbb{Z}$. 
The {\em integral (resp. rational) total index} of a smooth Fano variety $X$ is defined to be
\[
\tau_X(R) \coloneqq \max\left\{\,
\sum_{i=1}^k a_i \,\middle|\,
-K_X \equiv \sum_{i=1}^k a_iL_i, \forall i,\text{ $a_i\in R$, $L_i$ is nef Cartier
with $L_i\not\equiv 0$}
\right\} .
\]
\end{definition}

\begin{remark}
If $X$ is a $\mathbb Q$-Fano variety, the anticanonical divisor $-K_X$ is in general only $\mathbb{Q}$-Cartier. 
Since the decomposition 
\[
  -K_X \equiv \sum a_i L_i
\]
with integer coefficients forces $-K_X$ to be Cartier, the integer–coefficient total index $\tau_X(\mathbb{Z})$ 
is not well-defined unless $X$ is smooth (or at least Gorenstein). 
In such singular cases, one can only define a rational–coefficient version $\tau_X(\mathbb{Q})$ using $\mathbb{Q}$-Cartier divisors.
\end{remark}

Let $\pi:X\to\mathbb{P}^2$ be the blow-up of $r$ points $p_1, \dots, p_r$ in general position with exceptional curves $E_1 \coloneqq \pi^{-1}(p_1),\dots,E_r \coloneqq \pi^{-1}(p_r)$.
Set $H \coloneqq \pi^\ast \mathcal{O}_{\mathbb{P}^2}(1)$. In $\mathrm{N}^1(X)$ we write a Cartier divisor $L$ as
\[
L=xH-\sum_{i=1}^r y_i E_i,\qquad x,y_i\in\mathbb{Z},
\]
with intersection pairing
\begin{equation}\label{eq:inter}
 \left(xH-\sum y_iE_i \right)\cdot \left(x'H-\sum y'_iE_i \right)=xx'-\sum_i y_i y'_i.
\end{equation}
The anticanonical divisor is $-K_X \sim 3H-\sum_{i=1}^r E_i$.

We will use the following classes of irreducible curves:
\begin{itemize}[leftmargin=2em]
\item $E_i$ the exceptional curves, with $E_i^2=-1$;
\item $H-E_i-E_j$ for $i\ne j$, the class of the strict transform of the line through $p_i$ and $p_j$,
which is a (-1)-curve;
\item for any $4$-subset $I\subset\{1,\dots,r\}$ when $r\ge 4$, define
\[
 f_{[I]} \coloneqq 2H-\sum_{i\in I}E_i,
\]
which is the strict transform of a conic through the four points in $I$. The class $f_{[I]}$ is basepoint-free and has self-intersection $0$;
\item for any $5$-subset $I\subset\{1,\dots,r\}$ when $r\geq 5$, define
\[
 g_{[I]} \coloneqq 2H-\sum_{i\in I}E_i,
\]
which is the strict transform of the conic through the five points in $I$. This $g_{[I]}$ is a $(-1)$-curve;
\item for any $6$-subset $J=\{1,\dots,7\}\setminus\{i\}$,
define
\[
 h_{[J]} \coloneqq 3H-2E_i-\sum_{j\in J}E_j,
\]
which is the strict transform of a plane cubic curve having a double point at $p_i$ and passing simply through each of the other six blown-up points. This $h_{[J]}$ is a $(-1)$-curve.
\end{itemize}

\begin{lemma}\label{lem:nef-ineq}
Let $X=\mathrm{Bl}_r(\mathbb{P}^2)$. If $L=xH-\sum_{i=1}^r y_iE_i$ is a nef Cartier divisor on $X$, then
the following inequalities hold:
\begin{enumerate}[label=\textup{(\alph*)}]
\item $y_i\ge 0$ for all $i$; \hfill ($L \cdot E_i$)
\item $x-y_i-y_j\ge 0$ for all distinct $i,j$; \hfill ($L \cdot (H-E_i-E_j)$)
\item if $r\ge5$, then $2x-\sum_{i\in I}y_i\ge 0$ for all $5$-subsets $I\subset\{1,\dots,r\}$; \hfill ($L \cdot g_{[I]}$)
\item if $r=7$, then $3x-2y_i-\sum_{j\in J}y_j\ge 0$ for all $6$-subsets $J=\{1,\dots,7\}\setminus \{i\}$. \hfill ($L \cdot h_{[J]}$)
\end{enumerate}
\end{lemma}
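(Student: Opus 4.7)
The plan is to exploit the annotations on the right of the statement: each of (a)--(d) is obtained by pairing $L$ with a specific irreducible curve class listed in the discussion preceding the lemma, and then invoking nefness of $L$.

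First I would check that, under the general-position hypothesis on $p_1,\dots,p_r$, the four families of classes in play --- namely $E_i$, $H - E_i - E_j$, $g_{[I]} = 2H - \sum_{i\in I} E_i$ for a $5$-subset $I \subset \{1,\dots,r\}$ when $r \geq 5$, and $h_{[J]} = 3H - 2E_i - \sum_{j \in J} E_j$ for $J = \{1,\dots,7\}\setminus\{i\}$ when $r = 7$ --- are represented by irreducible curves on $X$. For $E_i$ this is immediate. For the others, one uses the standard fact that general position on $\mathbb{P}^2$ forbids unwanted incidences (no three of the $p_i$ collinear, no six on a conic, no seven on a cuspidal or reducible cubic of the relevant type), so that the strict transforms of the corresponding lines, conics, and nodal cubics are irreducible on $X$. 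The hypotheses $r \geq 5$ in (c) and $r = 7$ in (d) appear here precisely to ensure these curves are available.

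With irreducibility secured, for each of (a)--(d) I pair $L$ with the indicated curve class using the intersection formula \eqref{eq:inter}. Since $L$ is nef and the curve is irreducible, the resulting number is non-negative, and reading off the coefficients produces exactly the inequality asserted. This part is a purely mechanical application of \eqref{eq:inter} once one has written the test curve as $x'H - \sum y'_j E_j$.

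The one mildly non-routine point is the irreducibility of the nodal cubic $h_{[J]}$ appearing in (d): this relies on the classical fact that through seven points in general position there exists a unique irreducible plane cubic acquiring a node at any one prescribed point of the seven and passing smoothly through the other six. I would invoke this classical statement rather than reprove it. Beyond that, the proof is a verification; there is no real obstacle.
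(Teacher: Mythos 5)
Your proposal matches the paper's proof exactly: both simply intersect $L$ with the irreducible curve classes $E_i$, $H-E_i-E_j$, $g_{[I]}$, and $h_{[J]}$ already introduced before the lemma, and invoke nefness. The extra care you take over irreducibility of these curves under the general-position hypothesis is implicit in the paper's preceding discussion, so there is no substantive difference.
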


\begin{proof}
A nef Cartier divisor has nonnegative intersection with every effective curve. Each item follows by intersecting $L$ with the corresponding irreducible curves listed above.
\end{proof}

Define
\begin{equation}\label{eq:mX}
 m(X) \coloneqq \min\{\,(-K_X\!\cdot\!L)\mid L\ \text{nef Cartier},\ L\not\equiv0\,\}.
\end{equation}

\begin{proposition}[Base certificate]\label{prop:base}
The function
\begin{equation}\label{eq:Phi}
 \Phi_{\mathrm{base}}(L) \coloneqq \frac{1}{m(X)}\,(-K_X\!\cdot\!L)
\end{equation}
satisfies $\Phi_{\mathrm{base}}(L)\ge 1$ for all nef Cartier $L\not\equiv0$.
Consequently,
\begin{equation}\label{eq:bounds}
 \tau_X(\mathbb{Q})\le \frac{d}{m(X)}\,,\qquad \tau_X(\mathbb{Z})\le \left\lfloor\frac{d}{m(X)}\right\rfloor,
\end{equation}
where $d=(-K_X)^2$.
\end{proposition}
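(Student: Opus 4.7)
The plan is to unwind the definitions: the first inequality is essentially tautological, and the two upper bounds on $\tau_X$ follow by intersecting any admissible nef decomposition of $-K_X$ with $-K_X$ itself.

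First, I would observe that $\Phi_{\mathrm{base}}(L) \geq 1$ is immediate from the definition of $m(X)$ in \eqref{eq:mX}. Indeed, if $L$ is nef Cartier with $L \not\equiv 0$, then $-K_X \cdot L \geq m(X)$ by the very definition of $m(X)$ as a minimum over this class of divisors; dividing by $m(X) > 0$ gives $\Phi_{\mathrm{base}}(L) \geq 1$. (Note that $m(X) > 0$ since $X$ is a smooth del Pezzo, so $-K_X$ is ample and $-K_X \cdot L > 0$ for any nef Cartier $L \not\equiv 0$.)

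Next, to deduce the two bounds on the total index, I would pick any admissible decomposition
\[
-K_X \equiv \sum_{i=1}^k a_i L_i
\]
with $a_i \in R_{>0}$ (where $R \in \{\mathbb{Z},\mathbb{Q}\}$) and $L_i$ nef Cartier with $L_i \not\equiv 0$. Intersecting both sides with $-K_X$ and using the definition of $d$ yields
\[
d = (-K_X)^2 = \sum_{i=1}^k a_i\,(-K_X \cdot L_i) \geq m(X) \sum_{i=1}^k a_i,
\]
where the inequality uses $-K_X \cdot L_i \geq m(X)$ established in the previous paragraph. Dividing by $m(X)$ gives $\sum_{i=1}^k a_i \leq d/m(X)$, and taking the supremum over admissible decompositions yields $\tau_X(\mathbb{Q}) \leq d/m(X)$. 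In the integral case, $\sum_{i=1}^k a_i$ is a nonnegative integer bounded above by $d/m(X)$, hence bounded by $\lfloor d/m(X)\rfloor$.

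There is no real obstacle here: the statement is essentially a pairing argument and the only subtlety is verifying that $m(X) > 0$ and that $-K_X \cdot L \geq m(X)$ applies to each summand. Both are immediate from the ampleness of $-K_X$ on smooth del Pezzo surfaces and from the defining property of $m(X)$. The content of the proposition will be in its later applications, where one actually computes $m(X)$ via the curve classes listed before Lemma~\ref{lem:nef-ineq} to extract sharp numerical bounds.
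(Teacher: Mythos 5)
Your proposal is correct and follows essentially the same route as the paper: both arguments pair a nef decomposition $-K_X \equiv \sum a_i L_i$ against $-K_X$, use linearity of the intersection product, and apply the defining bound $-K_X\cdot L_i \ge m(X)$ to each summand. The paper simply packages the computation through the function $\Phi_{\mathrm{base}}$ (writing $\sum a_i \le \sum a_i\,\Phi_{\mathrm{base}}(L_i) = \Phi_{\mathrm{base}}(-K_X) = d/m(X)$), whereas you expand the same chain explicitly; the integral case is handled identically in both.
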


\begin{proof}
By definition of $m(X)$, we have $(-K_X\!\cdot\!L)\ge m(X)$
for every nef Cartier divisor $L\not \equiv 0$.
If $-K_X\equiv\sum a_iL_i$, then
\(
\sum a_i\le \sum a_i\,\Phi_{\mathrm{base}}(L_i)=\Phi_{\mathrm{base}}(-K_X)=d/m(X).
\)
In the integer-coefficient case, $\sum a_i\in\mathbb{Z}$, hence our assertion holds.
\end{proof}

\begin{remark} In Proposition~\ref{prop:base}, the term \emph{certificate} means a numerical witness that verifies or realizes the upper bound of the total index~$\tau_X$. 
The \emph{base certificate} is the simplest and most fundamental form of such a witness, given by
\[
\Phi_{\mathrm{base}}(L)=\frac{1}{m(X)}(-K_X\cdot L),
\]
which provides the basic numerical data for determining the maximal value of~$\tau_X$.

\end{remark}

\begin{proposition}\label{prop:m-values}
For smooth del Pezzo surfaces $X$, we have
\begin{equation}\label{eq:m-values}
 m(X)=
 \begin{cases}
 3,& X=\mathbb{P}^2,\\
 2,& X=\mathbb{P}^1\times\mathbb{P}^1,\\
 2,& X=\mathrm{Bl}_r(\mathbb{P}^2)\ \text{for }1\le r\le 7,\\
 1,& X=\mathrm{Bl}_8(\mathbb{P}^2).
 \end{cases}
\end{equation}
\end{proposition}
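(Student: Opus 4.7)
The plan is to handle the four cases of Proposition~\ref{prop:m-values} separately. For the two cases of Picard rank at most $2$ the nef cone is completely explicit and the computation is direct; for $\mathrm{Bl}_r(\mathbb{P}^2)$ with $r\leq 7$ I will use a Hodge index/Riemann--Roch parity obstruction, and the degree-$1$ case will be immediate from the ampleness of $-K_X$.

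First, for $X=\mathbb{P}^2$ I would observe that $\mathrm{Pic}(X)=\mathbb{Z}[H]$, so every nonzero nef Cartier divisor has the form $aH$ with $a\geq 1$, giving $-K_X\cdot aH = 3a\geq 3$ with equality at $L=H$. Similarly, for $X=\mathbb{P}^1\times\mathbb{P}^1$ the nef cone is spanned by the two rulings $F_1,F_2$, so every nonzero nef Cartier divisor has the form $aF_1+bF_2$ with $a,b\in\mathbb{Z}_{\geq 0}$ not both zero, and $-K_X=2F_1+2F_2$ yields $-K_X\cdot L=2a+2b\geq 2$, with equality at $L=F_1$ or $L=F_2$.

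For $X=\mathrm{Bl}_r(\mathbb{P}^2)$ with $1\leq r\leq 7$, so $d=9-r\geq 2$, the class $L=H-E_1$ is base-point-free (pullback of the pencil of lines through $p_1$) and satisfies $-K_X\cdot L=2$, yielding $m(X)\leq 2$. For the matching lower bound I would argue by contradiction: suppose there exists a nonzero nef Cartier divisor $L$ with $-K_X\cdot L=1$. Since $-K_X$ is ample with $(-K_X)^2=d$, the Hodge Index Theorem gives $d\cdot L^2\leq (-K_X\cdot L)^2=1$; as $L^2\in\mathbb{Z}$ and $L^2\geq 0$ (because $L$ is nef on a smooth surface), this forces $L^2=0$. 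Riemann--Roch then yields
\[
\chi(X,L)=1+\frac{L^2+L\cdot(-K_X)}{2}=1+\frac{0+1}{2}=\frac{3}{2},
\]
contradicting the integrality of $\chi(X,L)$. Hence $-K_X\cdot L\neq 1$ for any nonzero nef Cartier $L$, and combined with ampleness of $-K_X$ this gives $-K_X\cdot L\geq 2$, whence $m(X)=2$.

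Finally, for $X=\mathrm{Bl}_8(\mathbb{P}^2)$ the anticanonical class $-K_X$ is itself nef (in fact ample) and Cartier, with $(-K_X)^2=1$, so taking $L=-K_X$ gives $m(X)\leq 1$; since $-K_X$ is ample, any nonzero nef $L$ satisfies $-K_X\cdot L\geq 1$, and $m(X)=1$ follows. The only delicate step in the whole proof is the Hodge index/Riemann--Roch parity obstruction for $r\leq 7$; a more pedestrian alternative using the combinatorial inequalities of Lemma~\ref{lem:nef-ineq} would also work, but would require separate case-by-case bookkeeping in $r$.
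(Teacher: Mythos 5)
Your proposal is correct, and for the hardest case ($\mathrm{Bl}_r(\mathbb{P}^2)$ with $1\le r\le 7$) it takes a genuinely different and arguably cleaner route than the paper. The paper establishes $m(X)=2$ by intersecting a hypothetical nef Cartier $L=xH-\sum y_iE_i$ against an explicit list of $(-1)$-curves (Lemma~\ref{lem:nef-ineq}) and then doing separate casework on $x$ and on the ranges $r\le 4$, $r=5$, $r=6$, $r=7$, with the $r=7$, $x=2$ subcase requiring a short ad hoc argument to rule out $\sum y_i=5$. Your argument replaces all of that with a single conceptual step: assuming $-K_X\cdot L=1$, the Hodge index inequality $(-K_X)^2\,L^2\le(-K_X\cdot L)^2$ together with $d=(-K_X)^2\ge 2$ forces $L^2=0$, and then $L^2+(-K_X)\cdot L=1$ is odd, violating the parity constraint $\chi(X,L)\in\mathbb{Z}$ from Riemann--Roch (equivalently, the mod-$2$ adjunction constraint $D^2\equiv D\cdot K_X\pmod 2$). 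This avoids the list of curves entirely, is uniform in $r$ for $2\le d\le 8$, and correctly degenerates at $d=1$, where the Hodge index step no longer forces $L^2=0$ (and indeed $L=-K_X$ realizes $m=1$). The two approaches buy different things: the paper's combinatorial argument is more explicit and directly mirrors the structure of the nef cone of a del Pezzo surface, while yours is shorter, coordinate-free, and makes clear that the value $2$ is a consequence of parity rather than of the particular extremal rays. The remaining cases ($\mathbb{P}^2$, $\mathbb{P}^1\times\mathbb{P}^1$, $r=8$, and the upper bound $m(X)\le 2$ via $H-E_1$) are handled essentially as in the paper.
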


\begin{proof}
If $X=\mathbb{P}^2$ or $\mathbb{P}^1\times\mathbb{P}^1$, our assertion is trivial. Thus we assume $X=\mathrm{Bl}_r(\mathbb{P}^2)$ ($1\le r \le 8$). Let $L=xH-\sum_{i=1}^r y_iE_i$ with $x,y_i\in\mathbb{Z}$ be a numerically nontrivial nef Cartier divisor on $X$. Intersecting $L$ with $H$ and $E_i$, we have $x, y_i \ge 0$.
We also see that $x\ge1$; otherwise (b) of Lemma~\ref{lem:nef-ineq} forces $L\equiv0$. We now prove $(-K_X\cdot L)=3x-\sum_{i=1}^r y_i\ge 2$ in all cases $1\le r\le7$:

\smallskip
\noindent\underline{The case $x=1$}: By Lemma~\ref{lem:nef-ineq} (b), $y_i+y_j\le1$ for all $i\ne j$. With $y_i\in\mathbb{Z}_{\ge0}$ this implies $\sum_{i=1}^r y_i\le1$, so $3x-\sum_{i=1}^r y_i\ge 2$.

\smallskip
\noindent\underline{The case $x\ge2$, $r\le4$}: Summing Lemma~\ref{lem:nef-ineq} (b) over pairs yields
$\sum_{i=1}^r y_i\le 2x$, hence $3x-\sum_{i=1}^r y_i\ge x\ge2$.

\smallskip
\noindent\underline{The case $x\ge2$, $r=5$}: Lemma~\ref{lem:nef-ineq} (c) with $I=\{1,\dots,5\}$ gives $\sum_{i=1}^5 y_i\le 2x$, thus $3x-\sum_{i=1}^5 y_i\ge x\ge2$.

\smallskip
\noindent\underline{The case $x\ge2$, $r=6$}: For each $j$, apply Lemma~\ref{lem:nef-ineq} (c) to $I=\{1,\dots,6\}\setminus\{j\}$ and sum over $j$ to obtain $12x-5S\ge0$ with $S=\sum_{i=1}^6 y_i$. This yields that $\dfrac{12}{5}x\ge S$. Then we obtain $3x-S\ge 3x-\dfrac{12}{5}x=\dfrac{3}{5}x\ge \dfrac{6}{5}$ for all $x\ge2$. Since $3x-S$ is an integer, $3x-S\ge 2$.

\smallskip
\noindent\underline{The case $x\ge2$, $r=7$}: Using Lemma~\ref{lem:nef-ineq} (d), we obtain $S+y_i\le 3x$ for each $i$, where $S \coloneqq \sum_{i=1}^7 y_i$. Summing over $i$ yields
\begin{equation}\label{eq:delta-sum}
 8S \le 21x \quad\Longrightarrow\quad S \le \left\lfloor\frac{21}{8}x\right\rfloor.
\end{equation}
If $x\ge3$ then $\lfloor 21x/8\rfloor \le 3x-2$, hence $3x-S\ge2$.

For $x=2$ the estimate \eqref{eq:delta-sum} gives $S\le5$. We claim that $S\le 4$. To prove this, assume the contrary, i.e., $S=5$. 
The inequalities $x-y_i-y_j\ge 0$ force $y_i+y_j\le 2$ for all $i\ne j$.
Hence, at most one of the $y_i$ can take the value $2$: if $y_k=2$ then
all other $y_j$ must vanish, so the total sum is $S=\sum_{i=1}^7 y_i=2$. This is a contradiction. 
Thus, necessarily all $y_i\in\{0,1\}$ and at least
five of them equal $1$. But for the corresponding $5$--subset $I$ one has
\(
2x-\sum_{i\in I} y_i = 4-\sum_{i\in I}y_i \ge 0
\)
by Lemma~\ref{lem:nef-ineq} (c), whereas $\sum_{i\in I}y_i\ge 5$ under the assumption.
This is a contradiction. Therefore $S\le 4$, and consequently
$3x-S \ge 6-4=2$ as required.

In all cases $(-K_X\cdot L)\ge2$. The bound is attained by the nef Cartier divisor $H-E_1$, since $(-K_X\cdot(H-E_1))=2$ for every $1\le r\le7$. For $r=8$, the anticanonical divisor is nef with $(-K_X)^2=1$, so $m(X)=1$ attained by $-K_X$.
\end{proof}

Combining Propositions~\ref{prop:base} and \ref{prop:m-values} yields universal bounds
\begin{equation}\label{eq:univ}
 \tau_X(\mathbb{Q})\le \frac{d}{m(X)},\qquad \tau_X(\mathbb{Z})\le \left\lfloor\frac{d}{m(X)}\right\rfloor,
\end{equation}
which are sharp in all cases except $X=\mathrm{Bl}_r(\mathbb{P}^2)$ with $r=1,2$. In those two cases the improved certificate $\Phi_H(L) \coloneqq L\cdot H$ satisfies $\Phi_H(L)\in\mathbb{Z}_{\ge1}$ for all nef Cartier $L\not\equiv0$ and yields $\tau_X(R)\le (-K_X\cdot H)=3$.

\begin{theorem}
\label{thm:main.dP}
For a smooth del Pezzo surface $X$, one has
\begin{equation}\label{eq:tau-values}
 \tau_X(\mathbb{Q})=
 \begin{cases}
 3,& X=\mathbb{P}^2,\\
 4,& X=\mathbb{P}^1\times\mathbb{P}^1,\\
 3,& X=\mathrm{Bl}_r(\mathbb{P}^2),\ r=1,2,3,\\
 \dfrac{9-r}{2},& X=\mathrm{Bl}_r(\mathbb{P}^2),\ r=4,5,6,7,\\
 1,& X=\mathrm{Bl}_8(\mathbb{P}^2),
 \end{cases}
\qquad
 \tau_X(\mathbb{Z})=
 \begin{cases}
 3,& X=\mathbb{P}^2,\\
 4,& X=\mathbb{P}^1\times\mathbb{P}^1,\\
 3,& X=\mathrm{Bl}_r(\mathbb{P}^2),\ r=1,2,3,\\
 2,& X=\mathrm{Bl}_r(\mathbb{P}^2),\ r=4,5,\\
 1,& X=\mathrm{Bl}_r(\mathbb{P}^2),\ r=6,7,8.
 \end{cases}
\end{equation}
\end{theorem}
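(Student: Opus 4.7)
My plan is to use the certificates from Propositions~\ref{prop:base} and~\ref{prop:m-values} for the upper bounds, and to match them by explicit decompositions of $-K_X$ into positive combinations of nef Cartier divisors for the lower bounds. The nef Cartier classes I will need are elementary: on $\mathbb{P}^2$, the hyperplane $H$; on $\mathbb{P}^1\times\mathbb{P}^1$, the two rulings $H_1,H_2$; and on $\mathrm{Bl}_r(\mathbb{P}^2)$, the pullback $H$, the pencil classes $H-E_i$, the basepoint-free conic classes $f_{[I]} = 2H - \sum_{i\in I}E_i$ indexed by $4$-subsets $I$, and (for $r \ge 6$) the ample anticanonical class $-K_X$ itself.

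\textbf{Upper bounds.} The inequality \eqref{eq:univ} together with the values of $m(X)$ computed in Proposition~\ref{prop:m-values} directly yields the claimed bounds on both $\tau_X(\mathbb{Q})$ and $\tau_X(\mathbb{Z})$ for $X = \mathbb{P}^2$, $\mathbb{P}^1\times\mathbb{P}^1$, and $\mathrm{Bl}_r(\mathbb{P}^2)$ with $r \in \{3,4,5,6,7,8\}$. For $r = 1, 2$, the quotient $d/m(X) \in \{4,\,7/2\}$ overshoots the claimed value $3$; here I would invoke the improved certificate $\Phi_H(L) \coloneqq L \cdot H$ mentioned in the excerpt. Since any numerically nontrivial nef Cartier $L = xH - \sum y_iE_i$ on $\mathrm{Bl}_r(\mathbb{P}^2)$ satisfies $x \ge 1$ (otherwise Lemma~\ref{lem:nef-ineq}(b) combined with $y_i \ge 0$ forces $L \equiv 0$), the argument of Proposition~\ref{prop:base} with $\Phi_H$ in place of $\Phi_{\mathrm{base}}$ gives $\tau_X(R) \le (-K_X) \cdot H = 3$.

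\textbf{Lower bounds.} For $\mathbb{P}^2$ and $\mathbb{P}^1\times\mathbb{P}^1$, the obvious decompositions $-K_X = H+H+H$ and $-K_X = H_1+H_1+H_2+H_2$ achieve $3$ and $4$, respectively. For $r = 1,2,3$ I would use $-K_X = \sum_{i=1}^r (H - E_i) + (3-r)\,H$. For $r = 4$, the integer value $2$ is realized by $-K_X = f_{[\{1,2,3,4\}]} + H$, while the rational value $5/2$ uses the identity $\sum_{i=1}^4 (H - E_i) + f_{[\{1,2,3,4\}]} = 2(-K_X)$, i.e., $-K_X = \tfrac{1}{2}\,f_{[\{1,2,3,4\}]} + \sum_{i=1}^4 \tfrac{1}{2}(H - E_i)$. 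For $r = 5$ take $-K_X = f_{[\{1,2,3,4\}]} + (H - E_5)$. For $r = 6$, the rational value $3/2$ is realized by averaging three conic classes whose four-point supports tile $\{1,\dots,6\}$ with each index appearing exactly twice, namely $-K_X = \tfrac{1}{2}\bigl(f_{[\{1,2,3,4\}]} + f_{[\{1,2,5,6\}]} + f_{[\{3,4,5,6\}]}\bigr)$. For $r \in \{6,7,8\}$, the trivial decomposition $-K_X = 1 \cdot (-K_X)$ realizes the remaining integer value (and, for $r = 7, 8$, also the rational value) $1$, using that $-K_X$ is ample on the degree one and degree two del Pezzo surfaces.

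\textbf{Main obstacle.} The primary technical task is verifying that each summand used above is genuinely nef on the relevant del Pezzo surface; this reduces to checking nonnegativity of intersection against the explicit list of $(-1)$-curves enumerated before Lemma~\ref{lem:nef-ineq}, a direct but tedious case analysis. The main conceptual obstacle is to find, for $r = 4$ and $r = 6$, rational decompositions achieving the sharp fractional values $5/2$ and $3/2$; the key is the combinatorial observation that certain $4$-subsets of $\{1,\dots,6\}$ can be chosen so that their union, counted with multiplicity, covers each index exactly twice, and the analogous identity at $r = 4$ allows the single conic class to be split rationally against the four pencil classes. Matching these decompositions against the upper bounds case by case yields the values in \eqref{eq:tau-values}.
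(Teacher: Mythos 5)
Your proposal is correct and follows essentially the same route as the paper: the upper bounds come from the certificate inequalities \eqref{eq:univ} together with the values of $m(X)$ from Proposition~\ref{prop:m-values} (switching to $\Phi_H$ for $r=1,2$), and the lower bounds come from the very same explicit nef decompositions that the paper exhibits, including the rational certificates $\tfrac12 f_{[1234]}+\sum_{i=1}^4\tfrac12(H-E_i)$ at $r=4$ and $\tfrac12(f_{[1234]}+f_{[1256]}+f_{[3456]})$ at $r=6$ (identical to the paper's up to a relabeling of the index sets).
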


\begin{proof}
First, we prove the upper bound.
Apply \eqref{eq:univ} with $m(X)$ from Proposition~\ref{prop:m-values}; when $r=1,2$ replace the base certificate by $\Phi_H$ to obtain the exact bound $3$.

Now, we turn to prove the lower bound and thus optimality.
We present explicit nef decompositions of $-K_X$ attaining the values in \eqref{eq:tau-values}.

\smallskip\noindent
\textbf{$X=\mathbb{P}^2$:} $-K_X=H+H+H$.

\smallskip\noindent
\textbf{$X=\mathbb{P}^1\times\mathbb{P}^1$:} Writing $H_1,H_2$ for the rulings, $-K_X=H_1+H_1+H_2+H_2$.

\smallskip\noindent
\textbf{$X=\mathrm{Bl}_r(\mathbb{P}^2)$:} With our notation,
\begin{itemize}[leftmargin=2em]
\item $r=1$: $-K_X=(H-E_1)+H+H$;
\item $r=2$: $-K_X=(H-E_1)+(H-E_2)+H$;
\item $r=3$: $-K_X=(H-E_1)+(H-E_2)+(H-E_3)$;
\item $r=4$: for the rational decomposition we can consider $-K_X=\tfrac12 f_{[1234]}+\sum_{i=1}^4 \tfrac12(H-E_i)$ while we can consider $-K_X=f_{[1234]}+H$ for the integral decomposition;
\item $r=5$: $-K_X=f_{[1234]}+(H-E_5)$;
\item $r=6$: for the rational decomposition we consider $-K_X=\tfrac12 f_{[1234]}+\tfrac12 f_{[3456]}+\tfrac12 f_{[1256]}$; for the integer decomposition, the one-term presentation by $-K_X$ gives $1$;
\item $r=7$: $-K_X$ is nef; a one-term presentation gives $1$;
\item $r=8$: likewise, $-K_X$ is nef with $(-K_X)^2=1$; one term.
\end{itemize}
These reach the claimed totals, proving optimality together with the matching upper bounds.
\end{proof}

\begin{remark}\label{Ito}We can find examples such that $\tau_X(\mathbb{Q}) \not=\tau_X(\mathbb{Z})$ in the above list. Note that  the first calculation on such an example is done by Atsushi Ito (cf. \cite[Example 0.11]{gon-kinosaki}).
\end{remark}

\section{Smooth Fano threefolds of nef complexity one}
\label{sec:smooth-Fano-3folds}

Our main Theorem~\ref{mukai type gklt} characterizes products of projective spaces as the varieties with nef complexity $c_X$ equal to zero.
We actually expect that for a klt Fano variety $X$ we have $c_X=0$ whenever $c_X<1$,
however, our proof does not give this statement. 
In the case of smooth Fano varieties, we can only produce a few examples for which $c_X=1$. This motivates the following question.

\begin{question}
Are there smooth Fano varieties $X$ with $0<c_X \le 1$ other than the following four cases?
\begin{enumerate}
\item The $n$-dimensional smooth quadric $Q^n \subset \mathbb{P}^{n+1}$.
\item The blow-up of $\mathbb{P}^n$ along a linear subspace; namely,
\[
\mathbb{P}\bigl(\mathcal{O}_{\mathbb{P}^{n-\ell-1}}(1) \oplus
\mathcal{O}_{\mathbb{P}^{n-\ell-1}}^{\oplus (\ell + 1)}\bigr).
\]
\item A smooth hyperplane section of the Segre variety $\mathbb{P}^{n_1} \times \mathbb{P}^{n_2}$.
\item A product $\bigl(\prod_j \mathbb{P}^{n_j}\bigr) \times Y$, where $Y$ is as in \textup{(1)}–\textup{(3)}.
\end{enumerate}
\end{question}

In this section, we address this question for smooth Fano threefolds. From now on, we denote by $\tau_X$ the total index of $X$ over the rational numbers. The argument below works verbatim for the integral total index.

\begin{proposition}\label{prop:tau-length:ineq}
Let $X$ be a smooth Fano variety. Let $B$ be any set of $\rho(X)$ linearly independent extremal rays of $X$. For any extremal ray $R\subset \overline{\rm NE}(X)$, denote the length of $R$ by 
\[
\ell(R) \coloneqq \min\{-K_X\cdot C\mid C~\text{is a rational curve and~} [C]\in R\}.
\]
Then
\[
\tau_X \le \sum_{R\in B} \ell(R).
\]
\end{proposition}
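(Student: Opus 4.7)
The plan is to select, for each extremal ray $R \in B$, a rational curve $C_R \subset X$ with $[C_R] \in R$ realizing the length, i.e. $-K_X \cdot C_R = \ell(R)$. Since the rays in $B$ are linearly independent and there are $\rho(X)$ of them, the classes $\{[C_R]\}_{R\in B}$ form a basis of $N_1(X)_{\mathbb{R}}$. This basis will serve as a dual probing system for nef Cartier divisors.

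Given any decomposition $-K_X \equiv \sum_{i=1}^{k} a_i L_i$ with $a_i \in \mathbb{Q}_{>0}$ and each $L_i$ a numerically nontrivial nef Cartier divisor, I would intersect both sides with each $C_R$ and sum over $R \in B$:
\[
\sum_{R \in B} \ell(R) \;=\; \sum_{R \in B} (-K_X \cdot C_R) \;=\; \sum_{i=1}^{k} a_i \sum_{R \in B} (L_i \cdot C_R).
\]
The key step is then to show $\sum_{R \in B} (L_i \cdot C_R) \geq 1$ for every $i$. Since $L_i$ is nef, every term $L_i \cdot C_R$ is nonnegative. If all of them vanished, then $L_i$ would pair trivially with a basis of $N_1(X)_{\mathbb{R}}$, forcing $L_i \equiv 0$ and contradicting the hypothesis $L_i \not\equiv 0$. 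Therefore at least one $L_i \cdot C_R$ is strictly positive; since $L_i$ is Cartier and $C_R$ is an integral curve, the intersection number lies in $\mathbb{Z}_{\geq 0}$ and is hence $\geq 1$. This integrality upgrade is the heart of the argument.

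Combining the two displays gives
\[
\sum_{i=1}^{k} a_i \;\leq\; \sum_{i=1}^{k} a_i \sum_{R \in B} (L_i \cdot C_R) \;=\; \sum_{R \in B} \ell(R).
\]
Taking the supremum over all admissible decompositions yields $\tau_X \leq \sum_{R\in B}\ell(R)$. The only mild subtleties are the existence of a length-realizing rational curve in each extremal ray (guaranteed by the cone theorem for smooth, hence klt, Fano varieties) and the fact that $\rho(X)$ linearly independent extremal rays indeed produce linearly independent classes in $N_1(X)_{\mathbb{R}}$, which is built into the definition. I do not anticipate any further obstacle; the argument works identically for the integral total index since each $a_i$ can be taken integral without affecting the integrality step.
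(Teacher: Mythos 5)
Your proof is correct and is essentially the paper's proof, just written out in more detail: the paper likewise observes that each numerically nontrivial $L_i$ must pair positively (hence, by Cartier integrality, $\ge 1$) with the minimal curve of some ray in $B$, and then sums. Your elaboration of the integrality step and the final bookkeeping is welcome, since the paper's version is rather terse.
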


\begin{proof}
Suppose $-K_X\equiv \sum a_i L_i$ is a nef decomposition realizing $\tau_X=\sum a_i$. 
Since each $L_i$ is numerically nontrivial by definition, for each $i$ there exists an extremal ray $R\in B$ such that $L_i$ has a positive intersection with the minimal curve of $R$. 
Therefore, the sum of the coefficients cannot exceed the sum of the lengths.
\end{proof}

\begin{proposition}\label{prop:blowup-decomposition}
Let $\varphi:X=\operatorname{Bl}_p(Y)\to Y$ be the blow-up of a smooth projective variety $Y$ at a point $p$, with an exceptional divisor $E\simeq \mathbb{P}^{n-1}$. Let $\ell\subset E$ be a line contained in $E$. Then every divisor $L\in \mathrm{Pic}(X)$ can be written uniquely as
\[
L \sim \varphi^{\ast}M - (L\cdot \ell)\,E,
\]
for some divisor $M$ on $Y$. Moreover, if $L$ is numerically nontrivial nef, then so is $M$.
\end{proposition}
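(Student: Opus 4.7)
The plan is to exploit the standard decomposition $\mathrm{Pic}(X) = \varphi^{\ast}\mathrm{Pic}(Y) \oplus \mathbb{Z}[E]$ (valid because $\varphi$ is the blow-up of a smooth point) to obtain existence and uniqueness of the decomposition $L \sim \varphi^{\ast}M + b E$. To pin down $b = -(L\cdot\ell)$, I would intersect both sides with the line $\ell \subset E$. On the one hand, $\varphi_{\ast}\ell = 0$ since $\ell$ is contracted, so by the projection formula $\varphi^{\ast}M \cdot \ell = 0$. On the other hand, $\mathcal{O}_X(E)|_E \simeq \mathcal{O}_{\mathbb{P}^{n-1}}(-1)$, and hence $E\cdot\ell = -1$. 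Combining, $L\cdot\ell = 0 + b\cdot(-1) = -b$, so $b = -(L\cdot\ell)$ as claimed. Uniqueness is immediate from the direct sum decomposition of $\mathrm{Pic}(X)$.

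For the nefness assertion, set $a \coloneqq L\cdot\ell$, so $L \sim \varphi^{\ast}M - a E$. Since $L$ is nef and $\ell$ is an effective curve, I get $a \geq 0$ for free. To show $M$ is nef on $Y$, I would take an arbitrary irreducible curve $C \subset Y$ and show $M\cdot C \geq 0$. Let $\widetilde C \subset X$ denote the strict transform of $C$. The projection formula gives $\varphi^{\ast}M \cdot \widetilde C = M \cdot \varphi_{\ast}\widetilde C = M\cdot C$, so
\begin{equation*}
0 \;\leq\; L\cdot \widetilde C \;=\; \varphi^{\ast}M\cdot \widetilde C \;-\; a\,(E\cdot\widetilde C) \;=\; M\cdot C \;-\; a\,(E\cdot\widetilde C).
\end{equation*}
The key point is then that $E\cdot\widetilde C \geq 0$: this is clear when $C$ avoids $p$ (in which case the intersection is $0$), and when $C$ passes through $p$, the strict transform $\widetilde C$ meets $E$ properly in a non-negative number of points (in fact this number equals $\mathrm{mult}_p C$). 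Combined with $a\geq 0$, this yields $M\cdot C \geq 0$, so $M$ is nef.

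Finally, to see that $M$ is numerically nontrivial whenever $L$ is, observe that if $M \equiv 0$ then $L \equiv -aE$ with $a\geq 0$; testing nefness against the strict transform of any curve through $p$ forces $a \leq 0$, so $a=0$ and $L\equiv 0$, contradicting the hypothesis.

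\textbf{Expected obstacle.} I do not anticipate serious difficulties: the only point requiring any care is the verification $E\cdot\widetilde C \geq 0$ for curves through $p$, where one must distinguish between the set-theoretic intersection of $\widetilde C$ with $E$ (which could a priori be empty, zero-dimensional, or, if $C\subset\{p\}$, absent) and the scheme-theoretic intersection number. Since $C$ is irreducible and of positive dimension on $Y$, we have $\widetilde C \not\subset E$, so the intersection $\widetilde C \cap E$ is zero-dimensional and the intersection number is a non-negative count of points with multiplicities, which settles the sign.
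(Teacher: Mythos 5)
Your proof is correct. The decomposition and the computation of the coefficient via intersection with $\ell$ match the paper exactly. For the nefness of $M$, however, you take a slightly different (but equally valid) route. The paper tests $L$ against the auxiliary effective $1$-cycle $C = \widetilde\Gamma + m\ell$, where $m = \mathrm{mult}_p\Gamma$; this cycle is engineered so that $E\cdot C = 0$, which makes the computation collapse at once to $0\leq L\cdot C = M\cdot\Gamma$ without any need to discuss signs separately. You instead test $L$ against the strict transform $\widetilde C$ alone, obtaining $0 \leq M\cdot C - a(E\cdot\widetilde C)$, and then appeal to two separate sign observations: $a = L\cdot\ell\geq 0$ (nefness of $L$ against $\ell$) and $E\cdot\widetilde C = \mathrm{mult}_p C\geq 0$. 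The paper's choice of test cycle is a bit slicker because it sidesteps the need to record $a\geq 0$; your version is arguably more transparent about which quantities are non-negative and why. Your argument for the numerical nontriviality of $M$ is also complete, and in fact spells out a step (testing $L\equiv -aE$ against a curve through $p$ to force $a\leq 0$) that the paper leaves implicit. There is no gap; this is a legitimate minor variant of the same elementary argument.
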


\begin{proof}
Any divisor on $X$ is uniquely decomposed as $\varphi^{\ast}M - \beta E$ for some $\beta\in \mathbb{Z}$. Since $E\cdot \ell=-1$, one has $\beta=L\cdot \ell$. Now suppose $L$ is nef. For any curve $\Gamma\subset Y$ with multiplicity $m=\mathrm{mult}_p(\Gamma)$, consider the effective $1$-cycle $C=\widetilde\Gamma+m\ell$, where $\widetilde\Gamma$ is the strict transform of $\Gamma$. Then $E\cdot C=0$, and hence
\[
0\le L\cdot C = (\varphi^{\ast}M-(L\cdot \ell)E)\cdot (\widetilde\Gamma+m\ell)
= M\cdot \Gamma.
\]
Thus $M$ is nef. Moreover, if $M$ were numerically trivial, then $L\equiv - (L\cdot \ell)\,E$, contradicting $L\not\equiv 0$.
\end{proof}

\begin{proposition}\label{prop:blowup-fano}
Let $\varphi:X=\operatorname{Bl}_p(Y)\to Y$ be the blow-up of a smooth threefold $Y$ at a point. If $X$ is a smooth Fano threefold, then so is $Y$, and moreover
\[
\tau_X \le \tau_Y \quad \text{and} \quad c_X\ge c_Y+1.
\]
\end{proposition}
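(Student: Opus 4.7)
The plan is to verify the three assertions (that $Y$ is Fano, the inequality $\tau_X \le \tau_Y$, and the inequality $c_X \ge c_Y+1$) in order, with most of the work concentrated in the first step.

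First I would show that $Y$ is Fano via Kleiman's criterion: since $Y$ is smooth, it suffices to prove $-K_Y$ is strictly positive on every nonzero class of $\overline{\mathrm{NE}}(Y)$. Because $X$ is a smooth Fano threefold, $\overline{\mathrm{NE}}(X)$ is rational polyhedral, and $\overline{\mathrm{NE}}(Y)=\varphi_\ast \overline{\mathrm{NE}}(X)$ is generated by $\varphi_\ast$-images of those extremal rays of $X$ other than the ray $R_1$ contracted by $\varphi$ (the unique ray all of whose representatives lie in $E$). For any other extremal ray $R$, the cone theorem supplies an irreducible rational curve $C$ with $[C]\in R$; necessarily $C\not\subset E$, so $E\cdot C\ge 0$. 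Using $K_X=\varphi^\ast K_Y + 2E$, we compute
\[
-K_Y\cdot \varphi_\ast C \;=\; (-K_X+2E)\cdot C \;=\; \underbrace{-K_X\cdot C}_{>0} \,+\, 2\underbrace{E\cdot C}_{\ge 0} \;>\;0,
\]
establishing that $-K_Y$ is ample.

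Next, to show $\tau_X \le \tau_Y$ I would take an optimal nef decomposition $-K_X\equiv \sum_i a_i L_i$ realizing $\tau_X=\sum_i a_i$, and transport it to $Y$. By Proposition~\ref{prop:blowup-decomposition}, each summand satisfies $L_i\sim \varphi^\ast M_i -(L_i\cdot\ell)E$ with $M_i$ numerically nontrivial and nef on $Y$; as $Y$ is smooth, each $M_i$ is automatically Cartier. Summing with weights $a_i$, comparing with $-K_X\equiv \varphi^\ast(-K_Y)-2E$, and applying $\varphi_\ast$ (which satisfies $\varphi_\ast\varphi^\ast=\mathrm{id}$ on Cartier classes and kills $E$) yields
\[
-K_Y \;\equiv\; \sum_i a_i M_i,
\]
a valid nef decomposition of $-K_Y$ with total $\sum_i a_i=\tau_X$, hence $\tau_Y\ge \tau_X$. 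As a built-in consistency check, intersecting $-K_X\equiv \sum_i a_i L_i$ with $\ell$ recovers $\sum_i a_i(L_i\cdot\ell)=-K_X\cdot \ell=2$.

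Finally, the nef complexity inequality is formal: for a point blow-up one has $\rho(X)=\rho(Y)+1$, so combining with $\tau_Y\ge \tau_X$ gives
\[
c_X - c_Y \;=\; (\rho(X)-\rho(Y)) + (\tau_Y-\tau_X) \;\ge\; 1+0 \;=\; 1.
\]
The main obstacle is the first step: one must genuinely use the Fano property of $X$ to ensure that the non-contracted extremal rays of $X$ admit representatives outside $E$, since a curve contained in $E$ would have $E\cdot C<0$ and spoil the key positivity computation. The rest of the argument is an essentially formal transport of nef decompositions through the blow-up, enabled by the decomposition lemma.
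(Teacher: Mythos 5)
Your proof is correct and follows the same strategy as the paper for the second and third assertions (transporting a nef decomposition through Proposition~\ref{prop:blowup-decomposition} and the formal identity $\rho(X)=\rho(Y)+1$). The one genuine difference is in establishing that $Y$ is Fano: the paper simply cites \cite[Proposition~3.4]{w-contr}, whereas you give a self-contained argument via Kleiman's criterion, observing that $\overline{\mathrm{NE}}(Y)=\varphi_\ast\overline{\mathrm{NE}}(X)$ is generated by the pushforwards of the non-contracted extremal rays, that any curve representing such a ray lies outside $E\simeq\mathbb{P}^2$ (every curve in $E$ generates the contracted ray), and that $-K_Y\cdot\varphi_\ast C=(-K_X+2E)\cdot C>0$. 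This is a clean, elementary replacement for the external reference and makes the section more self-contained at the cost of a few extra lines; either approach is fine.
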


\begin{proof}
The former assertion follows from \cite[Proposition~3.4]{w-contr}. Suppose $-K_X\equiv \sum a_i L_i$ is a nef decomposition realizing $\tau_X=\sum a_i$. By Proposition~\ref{prop:blowup-decomposition}, we can write $L_i=\varphi^{\ast}M_i-(L_i\cdot \ell)E$ with $M_i$ numerically nontrivial and nef on $Y$. In summary,
\[
-K_Y = \sum a_i M_i.
\]
Thus $-K_Y$ admits a nef decomposition with total weight $\sum a_i=\tau_X$. It follows that $\tau_Y\ge \tau_X$. Moreover,
\[
 c_X=3+\rho(X)-\tau_X\ge 3+(\rho(Y)+1)-\tau_Y=c_Y+1.
\]
\end{proof}

From now on, we assume that $X$ is a smooth Fano threefold with $c_X\leq 1$. Then, by definition,
\[
\tau_X \geq \rho(X)+2.
\]

\begin{proposition}
If $X$ is a smooth Fano threefold with $\rho(X)=1$ and $c_X\leq 1$, then $X$ is isomorphic to either $\mathbb{P}^3$ or a smooth quadric threefold $Q^3\subset \mathbb{P}^4$.
\end{proposition}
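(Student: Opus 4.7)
The plan is a direct computation of $\tau_X$ that reduces the statement to the Kobayashi--Ochiai theorem. Since $X$ is a smooth Fano variety with $\rho(X)=1$, its Picard group is torsion-free of rank one; let $H$ denote its ample generator. Then every nef Cartier divisor on $X$ is of the form $nH$ with $n\in\mathbb{Z}_{\ge 0}$, and being numerically nontrivial forces $n\ge 1$. Writing $-K_X\equiv i(X)H$, any admissible nef decomposition reads $-K_X\equiv\sum_{i=1}^k a_i (n_iH)$ with $a_i\in\mathbb{Q}_{>0}$ and $n_i\in\mathbb{Z}_{\ge 1}$.

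The constraint $\sum_i a_i n_i=i(X)$ immediately yields $\sum_i a_i\le i(X)$, with equality attained by the decomposition $-K_X\equiv H+H+\cdots+H$ (that is, $n_i=1$ for all $i$). Hence $\tau_X=i(X)$, and the hypothesis $c_X=3+\rho(X)-\tau_X=4-i(X)\le 1$ gives $i(X)\ge 3$.

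By the Kobayashi--Ochiai theorem, $i(X)\le\dim X+1=4$, with equality exactly when $X\simeq\mathbb{P}^3$, while $i(X)=\dim X=3$ characterizes the smooth quadric threefold $Q^3\subset\mathbb{P}^4$. The two cases $i(X)=4$ and $i(X)=3$ exhaust the possibilities, completing the argument. No serious obstacle arises; the only subtle point worth recording is the torsion-freeness of $\mathrm{Pic}(X)$ for smooth Fano varieties, which legitimizes identifying numerical and linear equivalence and reduces the decomposition problem to arithmetic on the single generator $H$.
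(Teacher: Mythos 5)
Your proof is correct and follows the same path as the paper: compute $\tau_X$ when $\rho(X)=1$, show it equals $i(X)$, and conclude via Kobayashi--Ochiai. You simply make explicit the computation the paper glosses with ``$\tau_X$ coincides with the Fano index in this case.''
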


\begin{proof}
If $\rho(X)=1$, then $\tau_X\geq 3$. Since $\tau_X$ coincides with the Fano index in this case, the index of $X$ is at least three. Thus, we obtain our assertion by work of Kobayashi--Ochiai \cite{kooc}.
\end{proof}

\begin{proposition}\label{prop:length3}
Suppose $X$ is a smooth Fano threefold with $c_X\leq 1$ and $\rho(X)>1$. If $X$ has an extremal ray of length at least three, then either 
\[
X\simeq \mathbb{P}^1 \times \mathbb{P}^2 \qquad 
\text{ or }
\qquad 
X \simeq \mathbb{P}\bigl(\mathcal{O}_{\mathbb{P}^1}(1)\oplus \mathcal{O}_{\mathbb{P}^1}^{\oplus 2}\bigr).
\]
\end{proposition}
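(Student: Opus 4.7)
The plan is to show that the length-$\geq 3$ extremal ray endows $X$ with the structure of a $\mathbb{P}^2$-bundle over $\mathbb{P}^1$, and then to classify such smooth Fano bundles. Let $R$ be the extremal ray with $\ell(R)\geq 3$ and $\varphi_R\colon X\to Y$ the associated elementary contraction. First I would rule out that $\varphi_R$ is divisorial: by Mori's classification of divisorial extremal contractions of smooth threefolds (types $E_1$--$E_5$), the length of any divisorial ray is at most $2$. Hence $\varphi_R$ is of fiber type. Since $\rho(X)>1$ we have $\dim Y\geq 1$; and since a contraction with $\dim Y=2$ is a standard conic bundle whose extremal ray has length at most $2$, I conclude $\dim Y=1$.

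A general fiber $F$ of $\varphi_R$ is then a smooth del Pezzo surface, and the Ionescu--Wi\'sniewski length inequality gives $\ell(R)\leq \dim F+1=3$, forcing equality. By a theorem of Fujita (resp.\ Andreatta--Wi\'sniewski) on elementary contractions attaining equality in this bound, every fiber of $\varphi_R$ is isomorphic to $\mathbb{P}^2$ and $\varphi_R$ is a Zariski-locally trivial $\mathbb{P}^2$-bundle. Since $X$ is Fano, applying the canonical bundle formula along $\varphi_R$ (or Lemma~\ref{big.anticanonical.lemma} to any klt log Calabi--Yau structure on $X$) shows that $Y$ is of Fano type, hence $Y\simeq \mathbb{P}^1$. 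In particular $\rho(X)=1+\rho(\mathbb{P}^1)=2$, so the hypothesis $c_X\leq 1$ is automatically satisfied rather than being used explicitly at this stage.

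By Grothendieck's splitting theorem, $X\simeq \mathbb{P}(\mathcal{E})$ with $\mathcal{E}=\mathcal{O}_{\mathbb{P}^1}\oplus \mathcal{O}_{\mathbb{P}^1}(a_1)\oplus \mathcal{O}_{\mathbb{P}^1}(a_2)$ and $0\leq a_1\leq a_2$ after normalization. Denoting by $\xi$ the tautological class and by $F$ the class of a fiber, the relative Euler sequence yields $-K_X\equiv 3\xi+(2-a_1-a_2)F$. With this normalization the nef cone of $X$ is generated by $F$ and $\xi$, so the Fano condition (ampleness of $-K_X$) forces $2-a_1-a_2>0$, leaving only $(a_1,a_2)\in\{(0,0),(0,1)\}$. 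These correspond respectively to $X\simeq \mathbb{P}^1\times\mathbb{P}^2$ and to $X\simeq \mathbb{P}(\mathcal{O}_{\mathbb{P}^1}\oplus\mathcal{O}_{\mathbb{P}^1}\oplus\mathcal{O}_{\mathbb{P}^1}(1))$, as required.

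The main technical step, and expected principal obstacle, is the characterization of length-$3$ extremal contractions of a smooth threefold as honest $\mathbb{P}^2$-bundles over $\mathbb{P}^1$ (through the length inequality plus Fujita/Ando--Wi\'sniewski type rigidity); once that input is cited in the right form, the remaining verification is a direct calculation on a Hirzebruch-type threefold and imposes no further conditions beyond those that are already implicit in the hypotheses.
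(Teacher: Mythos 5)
Your proof is correct and follows essentially the same route as the paper's: both reduce the length-$\geq 3$ extremal contraction to a $\mathbb{P}^2$-bundle over $\mathbb{P}^1$ via the classification of extremal contractions of smooth threefolds, normalize $X\simeq\mathbb{P}(\mathcal{E})$ with $\mathcal{E}=\mathcal{O}_{\mathbb{P}^1}\oplus\mathcal{O}_{\mathbb{P}^1}(a_1)\oplus\mathcal{O}_{\mathbb{P}^1}(a_2)$, $0\leq a_1\leq a_2$, and impose ampleness of $-K_X\equiv 3\xi+(2-a_1-a_2)F$ to force $a_1+a_2<2$. The paper cites the Mori/Iskovskikh--Prokhorov classification wholesale where you expand it into the length inequality plus rigidity argument, and it additionally computes $\tau_X=4$ to verify $c_X=1$ for $\mathbb{P}(\mathcal{O}_{\mathbb{P}^1}(1)\oplus\mathcal{O}_{\mathbb{P}^1}^{\oplus 2})$, but those are presentational differences rather than mathematical ones.
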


\begin{proof}
The classification of extremal contractions of smooth threefolds shows that the only possibility for an extremal ray of length at least three is a $\mathbb{P}^2$-bundle over $\mathbb{P}^1$ (see \cite{Mori} or \cite[Theorem~1.4.3]{IP}). Hence $X\simeq \mathbb{P}(E)$ where $E\simeq \mathcal{O}_{\mathbb{P}^1}(a)\oplus \mathcal{O}_{\mathbb{P}^1}(b)\oplus \mathcal{O}_{\mathbb{P}^1}$ with $a\ge b\ge 0$. Since $X$ is Fano, the canonical bundle formula yields $2> a+b\ge 0$. If $a=b=0$, then $X\simeq \mathbb{P}^1\times \mathbb{P}^2$, which gives $c_X=0$. If $(a,b)=(1,0)$, then $X$ is isomorphic to $\mathbb{P}\big(\mathcal{O}_{\mathbb{P}^1}(1)\oplus \mathcal{O}_{\mathbb{P}^1}^{\oplus 2}\big)$. Denote by $\xi$ the tautological divisor on $\mathbb{P}\big(\mathcal{O}_{\mathbb{P}^1}(1)\oplus \mathcal{O}_{\mathbb{P}^1}^{\oplus 2}\big)$.
Then we have
\[
 -K_X = 3\xi + H,
\]
where $H$ is the pullback of a point in $\mathbb{P}^1$
by the natural projection $X \to \mathbb{P}^1$.
This shows that $\tau_X \ge 4$.

On the other hand, $X$ is the blow-up of $\mathbb{P}^3$
along a line; let $\pi\colon X \to \mathbb{P}^3$ denote the blow-up morphism.
Since $\pi_{\ast}(-K_X) = -K_{\mathbb{P}^3}$, we also have $\tau_X \le 4$.
As a consequence, we obtain
$\tau_X=4$ and $\rho(X)=2$, so that $c_X=1$.
\end{proof}

From now on, we assume that all extremal rays have length one or two.

\begin{proposition}\label{prop:two-extremal-length2}
Let $X$ be a smooth Fano threefold with $c_X\leq 1$. If all extremal rays of $X$ have length one or two, then either $X\simeq (\mathbb{P}^1)^3$ or $X$ has exactly two extremal rays of length two.
\end{proposition}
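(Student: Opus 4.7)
The plan is to assume $X$ has at least three extremal rays of length $2$ and deduce $X \simeq (\mathbb{P}^1)^3$.

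For the numerical setup: applying Proposition \ref{prop:tau-length:ineq} to a basis of extremal rays that uses as many length-$1$ rays as possible yields $\tau_X \leq 2\rho(X) - r_1$, where $r_1$ denotes the dimension of the span of the length-$1$ extremal rays. The hypothesis $c_X \leq 1$ reads $\tau_X \geq \rho(X) + 2$, which forces $r_1 \leq \rho(X) - 2$; in particular, the length-$2$ extremal rays span at least a two-dimensional subspace. Since a two-dimensional closed convex cone admits at most two extremal rays, having three or more length-$2$ extremal rays allows us to pick three that are linearly independent, say $R_1, R_2, R_3$. In particular $\rho(X) \geq 3$.

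For the classification of the $R_i$: by Mori's theorem on extremal contractions of smooth threefolds, every length-$2$ extremal ray arises from one of (a) a divisorial contraction of type $E_2$ (blow-up of a smooth point), (b) a smooth $\mathbb{P}^1$-bundle over a smooth surface, or (c) a $Q^2$-fibration over a smooth curve. Case (c) forces $\rho(X) = 2$, contradicting $\rho(X) \geq 3$. Case (a) is excluded as follows: if $R_1$ were of type $E_2$, then $X = \mathrm{Bl}_p(Y)$ for some smooth Fano threefold $Y$, and by Proposition \ref{prop:blowup-fano}, $c_Y \leq c_X - 1 \leq 0$, forcing $c_Y = 0$; Theorem \ref{mukai type gklt} then yields $Y \in \{\mathbb{P}^3, \mathbb{P}^1 \times \mathbb{P}^2, (\mathbb{P}^1)^3\}$. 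A direct intersection-number computation on the strict transform of a ruling through $p$ shows $-K_X$ fails to be ample unless $Y = \mathbb{P}^3$; but $\mathrm{Bl}_p(\mathbb{P}^3)$ has Picard rank $2$ and only two extremal rays, contradicting $s \geq 3$. Hence each $R_i$ is of type (b), giving a smooth $\mathbb{P}^1$-bundle $\phi_i : X \to S_i$ onto a smooth surface of Fano type (by Lemma \ref{big.anticanonical.lemma}) of Picard rank $\rho(X) - 1$.

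For the conclusion when $\rho(X) = 3$: every extremal contraction is of fibre type, so by Theorem \ref{simpliciality-intro} the nef cone of $X$ is simplicial with three primitive Cartier extremal generators $N_1, N_2, N_3$ characterised by $N_j \cdot R_i = 0$ for $i \neq j$ (since $N_j$ lies on the facet $\phi_i^\ast \mathrm{Nef}(S_i)$). The splitting $\mathrm{Pic}(X) = \phi_i^\ast \mathrm{Pic}(S_i) \oplus \mathbb{Z}\xi_i$, with $\xi_i$ the tautological class of the $\mathbb{P}^1$-bundle, further forces $N_i \cdot R_i = 1$. Writing $-K_X = \sum_{j=1}^3 \alpha_j N_j$ and pairing with $R_i$ gives $\alpha_i = -K_X \cdot R_i = 2$, hence $\tau_X \geq 6$ and $c_X = 0$. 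By Theorem \ref{mukai type gklt} and the dimension and Picard-rank constraints, $X \simeq (\mathbb{P}^1)^3$.

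The main obstacle will be ruling out $\rho(X) \geq 4$ under the same hypotheses, where Theorem \ref{simpliciality-intro} does not directly apply because of additional extremal rays beyond $R_1, R_2, R_3$. The plan here is to exploit the lower bound $\tau_X \geq 2 + \tau_{S_i}$ coming from the decomposition $-K_X = 2\xi_i + \phi_i^\ast(-K_{S_i})$, combined with the explicit values of $\tau$ for del Pezzo surfaces of Picard rank $\geq 3$ provided by Theorem \ref{thm:main.dP} and the matching upper bound $\tau_X \leq 2\rho(X) - r_1$, to derive a numerical contradiction.
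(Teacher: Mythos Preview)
The paper's proof is a two-line argument: it cites Fujita's result \cite[Theorem~1.4(1)]{Fuji14c}, which says directly that a smooth Fano threefold either is $(\mathbb{P}^1)^3$ or has at most two extremal rays of length~$\geq 2$; then the inequality $\rho(X)+2\leq\tau_X\leq\sum_{R\in B}\ell(R)$ forces exactly two. Your approach bypasses Fujita's theorem and tries to argue directly, which is a reasonable ambition, but the argument as written has two genuine gaps.

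\textbf{The $\rho(X)=3$ case.} You assert that ``every extremal contraction is of fibre type'' in order to invoke Theorem~\ref{simpliciality-intro}, but you have only established this for the three chosen rays $R_1,R_2,R_3$. Nothing rules out additional length-$1$ extremal rays of divisorial type (e.g.\ of type $E_1$): your numerical bound only gives $r_1\leq \rho(X)-2=1$, which still permits one such ray. Without simpliciality of $\overline{\mathrm{NE}}(X)$ you cannot write $-K_X=\sum\alpha_jN_j$ with the $N_j$ the three dual generators, and the argument collapses. (The claim $N_i\cdot R_i=1$ also deserves more care, though it can be salvaged once simpliciality is in hand.) Proving that three $\mathbb{P}^1$-bundle structures on a smooth Fano threefold force $(\mathbb{P}^1)^3$ is possible but is essentially the content of Fujita's theorem you are trying to avoid; compare the effort in the paper's own treatment of the two-bundle case.

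\textbf{The $\rho(X)\geq 4$ case.} This is only a sketch, and the key input is incorrect: for a $\mathbb{P}^1$-bundle $X=\mathbb{P}(E)\to S_i$ one has $-K_X=2\xi_i+\phi_i^\ast(-K_{S_i}-\det E)$, not $2\xi_i+\phi_i^\ast(-K_{S_i})$, and $\xi_i$ need not be nef. So the proposed lower bound $\tau_X\geq 2+\tau_{S_i}$ does not follow, and the comparison with Theorem~\ref{thm:main.dP} does not get off the ground.

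In short, the route you propose requires substantially more work than the paper's citation of Fujita, and the present draft does not close either case.
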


\begin{proof}
By a theorem of Fujita \cite[Theorem 1.4(1)]{Fuji14c}, either $X\simeq (\mathbb{P}^1)^3$ or $X$ has at most two extremal rays of length two. Our argument holds in the first case, so we consider the second. In this case, by Proposition~\ref{prop:tau-length:ineq}, we have 
\begin{equation}\label{double_ineq}
\rho(X)+2 \leq \tau_X\le \sum \ell(R)\le \rho(X)+2.
\end{equation}
This implies that there must be exactly two rays with $\ell(R)=2$, and all other rays have $\ell(R)=1$.
\end{proof}

The classification of contractions of smooth threefolds (\cite{Mori}, \cite[Theorem~1.4.3]{IP}) shows that a contraction of length two must be one of the following types:
\begin{itemize}
\item[(E2)] the blow-up of a smooth variety at a point (exceptional divisor $\mathbb{P}^2$ with normal bundle $\mathcal{O}_{\mathbb{P}^2}(-1)$);
\item[(C2)] a $\mathbb{P}^1$-bundle;
\item[(D2)] a $\mathbb{P}^1\times\mathbb{P}^1$-fibration.
\end{itemize}

\begin{lemma}\label{lem:noD2}
Let $X$ be a smooth Fano threefold with $c_X\leq 1$ and exactly two extremal rays of length two. Then $X$ does not admit any type (D2) contraction.
\end{lemma}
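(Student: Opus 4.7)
The plan is to suppose, for contradiction, that $X$ admits a (D2) contraction $\pi\colon X\to \mathbb{P}^1$, and to write $R_\pi$ for its extremal ray, $R'$ for the other length-$2$ ray, and $F$ for a general fiber, so $F\simeq \mathbb{P}^1\times \mathbb{P}^1$. The argument will proceed in two stages: first, a dual-basis analysis forces $\rho(X)=2$; then, a case analysis on the contraction $\sigma=\mathrm{cont}_{R'}$ rules out each of the three possible types (E2), (C2), (D2).

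First I will refine the proof of Proposition~\ref{prop:tau-length:ineq}. Pick a basis $\mathcal{B}$ of $\rho(X)$ linearly independent extremal rays containing $R_\pi$ and $R'$. Equation~\eqref{double_ineq} gives $\tau_X=\rho(X)+2=\sum_{R\in\mathcal{B}}\ell(R)$; tracing through the proof of Proposition~\ref{prop:tau-length:ineq}, equality forces every nef Cartier divisor $L_i$ appearing in a maximal decomposition $-K_X = \sum a_i L_i$ to satisfy $L_i\cdot C_R = 1$ for a unique $R\in\mathcal{B}$ and $0$ for the others. Regrouping, $-K_X \equiv \sum_{R\in\mathcal{B}}\ell(R)L_R$, where the $L_R$ are nef Cartier divisors forming a basis of $N^1(X)$ dual to $\{[C_R]\}_{R\in\mathcal{B}}$, i.e.\ $L_R\cdot C_{S}=\delta_{RS}$ for $R,S\in\mathcal{B}$. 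Because $\pi$ is extremal, every curve in any fiber of $\pi$ has class proportional to $[C_{R_\pi}]$; therefore, for every $R\in\mathcal{B}\setminus\{R_\pi\}$ the divisor $L_R$ is trivial on all curves in the fibers of $\pi$, and hence $L_R\equiv c_R[F]$ for some $c_R\in\mathbb{Z}_{\ge0}$. Since the $L_R$ are linearly independent in $N^1(X)$ but all but one lies in the one-dimensional subspace spanned by $[F]$, we conclude $\rho(X)=2$; moreover $L_{R'}=[F]$, so $[F]\cdot C_{R'}=1$ and $C_{R'}$ is a section of $\pi$.

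Next I will carry out the case analysis on $\sigma$. If $\sigma$ is of type (E2), then $X=\mathrm{Bl}_pY$ with $Y$ a smooth threefold of Picard rank $1$; the Fano condition on $X$ together with Kobayashi--Ochiai yields $Y\in\{\mathbb{P}^3, Q^3\}$, and in both cases the other extremal ray of $X$ is known explicitly (a $\mathbb{P}^1$-bundle over $\mathbb{P}^2$ for $Y=\mathbb{P}^3$, and a divisorial contraction to $\mathbb{P}^3$ along a conic for $Y=Q^3$) and is not of type (D2), contradicting the existence of $\pi$. If $\sigma$ is of type (C2), then $X$ is a smooth Fano $\mathbb{P}^1$-bundle over a surface of Picard rank $1$, hence over $\mathbb{P}^2$; the complete list is $\{\mathbb{P}^1\times\mathbb{P}^2,\ \mathrm{Bl}_p\mathbb{P}^3,\ \mathbb{P}(T_{\mathbb{P}^2})\}$, and a direct inspection shows that none admits a (D2) contraction. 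If $\sigma$ is of type (D2), then $X$ admits two distinct quadric bundle structures $\pi,\sigma\colon X\to\mathbb{P}^1$; the product morphism $\pi\times\sigma\colon X\to \mathbb{P}^1\times\mathbb{P}^1$ is then surjective (otherwise $\pi$ and $\sigma$ would factor through a common base, forcing $R_\pi=R'$), with one-dimensional general fibers, and any curve $\Gamma$ contained in such a fiber is contracted by both $\pi$ and $\sigma$, yielding $0\neq [\Gamma]\in R_\pi\cap R'=\{0\}$, an absurdity.

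The main obstacle will be Step~1: one must carefully verify that the equality case of Proposition~\ref{prop:tau-length:ineq} produces not just a numerical identity but the structural dual-basis statement $L_R\cdot C_{S}=\delta_{RS}$, and then use the extremality of $\pi$ (rather than genericity of a single fiber) to conclude $L_R\equiv c_R[F]$ even in the presence of possibly singular or reducible fibers. The case analysis in Step~2 is thereafter a straightforward application of standard classifications of low-Picard-rank smooth Fano threefolds, together with the cone-theoretic observation that $R_\pi\cap R'=\{0\}$ for distinct extremal rays.
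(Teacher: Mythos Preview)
Your argument is essentially correct but takes a different route from the paper's. Both begin by reducing to $\rho(X)=2$: the paper simply cites \cite[Corollary~3.17]{KM98}, whereas your dual-basis analysis is self-contained and extracts the additional fact $[F]\cdot C_{R'}=1$. From that point the paper proceeds geometrically: any curve lying simultaneously in a fiber of $\pi$ and in a positive-dimensional fiber of $\sigma$ would have class in $R_\pi\cap R'=\{0\}$, which rules out types (E2) and (D2) for $\sigma$ at once; in the remaining case (C2) an intersection-number computation forces $(\pi,\sigma)\colon X\to\mathbb{P}^1\times\mathbb{P}^2$ to be an isomorphism, contradicting the standing length-$\le 2$ hypothesis. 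Your approach is instead a classification-based case split.

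Two points need tightening. In your (E2) case, the assertion $Y\in\{\mathbb{P}^3,Q^3\}$ is not justified as written: there are many smooth Fano threefolds $Y$ with $\rho(Y)=1$ whose one-point blow-up is Fano. You can repair this either by invoking Proposition~\ref{prop:blowup-fano} (giving $i(Y)=\tau_Y\ge\tau_X=4$, hence $Y\simeq\mathbb{P}^3$), or more elegantly by using your own Step~1 conclusion $[F]\cdot\ell=1$ for a line $\ell\subset E\simeq\mathbb{P}^2$, which would force a non-constant morphism $\mathbb{P}^2\to\mathbb{P}^1$. In your (C2) case, the list of smooth Fano $\mathbb{P}^1$-bundles over $\mathbb{P}^2$ omits $\mathbb{P}\bigl(\mathcal{O}_{\mathbb{P}^2}\oplus\mathcal{O}_{\mathbb{P}^2}(2)\bigr)$; its second ray has length $1$, so it is excluded by the hypothesis anyway, and the conclusion survives. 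The paper's direct computation avoids these external classification inputs altogether.
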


\begin{proof}
Suppose a contraction of an extremal ray $\varphi:X\to \mathbb{P}^1$ is a $\mathbb{P}^1\times\mathbb{P}^1$-fibration. Then $\rho(X)=2$ by \cite[Corollary 3.17]{KM98}. 
We have another elementary contraction $\psi: X\to Z$ which is also of type (E2), (C2), or (D2). Since fibers of $\varphi$ and $\psi$ intersect in at most finitely many points, $\psi$ must be a $\mathbb{P}^1$-bundle. Since $Z$ is a smooth Fano surface with $\rho(Z)=1$, it is $\mathbb{P}^2$.
In this case, the nef decomposition of $-K_X$ is given by $-K_X=a\,\varphi^{\ast}\mathcal{O}_{\mathbb P^1}(1)+b\,\psi^{\ast}\mathcal{O}_{\mathbb P^2}(1)$, where $a+b=4$ and $a,b\ge 0$;
see \eqref{double_ineq}.
For minimal rational curves $C$ and $C'$ associated with $\varphi$ and $\psi$, respectively, one has
\[
2 = -K_X \cdot C = b\,\mathcal{O}_{\mathbb{P}^2}(1)\cdot \psi_{\ast}(C)
\quad \text{and}\quad 
2 = -K_X \cdot C' = a\,\mathcal{O}_{\mathbb{P}^1}(1)\cdot \varphi_{\ast}(C').
\]
This implies 
\[
4=a+b=\frac{2}{\mathcal{O}_{\mathbb{P}^1}(1)\cdot \varphi_{\ast}(C')}
+\frac{2}{\mathcal{O}_{\mathbb{P}^2}(1)\cdot \psi_{\ast}(C)}\le 4.
\]
Therefore we obtain 
\[
a=b=2 \quad \text{and}\quad 
\mathcal{O}_{\mathbb{P}^2}(1)\cdot \psi_{\ast}(C)=
\mathcal{O}_{\mathbb{P}^1}(1)\cdot \varphi_{\ast}(C')=1.
\]
In particular, $\varphi|_{C'}: C'\to \mathbb{P}^1$ is an isomorphism. This implies that $(\varphi, \psi): X\to \mathbb{P}^1\times \mathbb{P}^2$ is an isomorphism. However, $\mathbb{P}^1\times \mathbb{P}^2$ admits an extremal ray of length three. This is a contradiction.
\end{proof}

\begin{proposition}\label{prop:two:rays}
Let $X$ be a smooth Fano threefold with $c_X\leq 1$. Assume $X$ has exactly two extremal rays of length two. Then at most one of them can be of type (E2).
\end{proposition}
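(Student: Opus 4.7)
The plan is to argue by contradiction. Suppose both length-two extremal rays $R_1,R_2$ are of type (E2), with contractions $\varphi_i \colon X \to Y_i$ blowing down a divisor $E_i \simeq \mathbb{P}^2$ (with normal bundle $\mathcal{O}_{\mathbb{P}^2}(-1)$) to a smooth point $p_i \in Y_i$.

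The first step is to apply Proposition~\ref{prop:blowup-fano} to each $\varphi_i$: this gives that each $Y_i$ is itself a smooth Fano threefold with
\[
c_{Y_i} \le c_X - 1 \le 0.
\]
Since nef complexity is nonnegative by Theorem~\ref{mukai type gklt} (equivalently, Corollary~\ref{cor:mukai-type-klt-fano}), this forces $c_{Y_i}=0$, whence each $Y_i$ is a product of projective spaces. As $\dim Y_i = 3$, we get $Y_i \in \{\mathbb{P}^3,\ \mathbb{P}^1\times\mathbb{P}^2,\ (\mathbb{P}^1)^3\}$, and since $\rho(Y_i)=\rho(X)-1$, the value of $Y_i$ is determined by $\rho(X)\in\{2,3,4\}$, with $Y_1\simeq Y_2$.

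The second step is a case analysis. If $\rho(X)=2$, then $Y_1=Y_2=\mathbb{P}^3$ and $X\simeq \mathrm{Bl}_p\mathbb{P}^3\simeq \mathbb{P}_{\mathbb{P}^2}(\mathcal{O}\oplus\mathcal{O}(1))$. But this variety has $\rho=2$ and only one type-(E2) ray (the blow-down); the other extremal ray is the $\mathbb{P}^1$-bundle projection onto $\mathbb{P}^2$, which is not divisorial and in particular not of type (E2), contradicting our assumption. For $\rho(X)\in\{3,4\}$, the base $Y_i$ admits a $\mathbb{P}^1$-factor; let $F\subset Y_i$ be the corresponding smooth ruling fiber through the point $p_i$. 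Then $-K_{Y_i}\cdot F = 2$, while the strict transform $\widetilde{F}\subset X$ meets $E_i$ transversally at one point, so $\widetilde F\cdot E_i = 1$. Using the crepant relation $-K_X = \varphi_i^{\ast}(-K_{Y_i}) - 2E_i$, we obtain
\[
-K_X\cdot \widetilde F \;=\; (-K_{Y_i})\cdot F - 2(\widetilde F\cdot E_i) \;=\; 2 - 2 \;=\; 0,
\]
which contradicts the ampleness of $-K_X$ on the Fano threefold $X$.

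The case analysis is exhaustive because $\rho(X) \le \rho((\mathbb{P}^1)^3)+1 = 4$, and no other products of projective spaces of dimension three exist. The main subtlety is the $\rho(X)=2$ case, where one cannot produce a $(-K_X)$-trivial curve directly; there one must instead recognise that the explicit geometry of $\mathrm{Bl}_p\mathbb{P}^3$ only provides a single type-(E2) ray, which is a standard consequence of its $\mathbb{P}^1$-bundle description. The remaining steps are short direct intersection computations.
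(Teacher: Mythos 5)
Your proof is correct, but it takes a genuinely different route from the paper's. The paper's argument is shorter: it observes that the two exceptional divisors $E_1 \simeq E_2 \simeq \mathbb{P}^2$ (each with normal bundle $\mathcal{O}(-1)$) are necessarily \emph{disjoint} (any curve in $E_1\cap E_2$ would belong to both $R_1$ and $R_2$ by the intersection criterion $E_i\cdot C<0$), so both can be blown down simultaneously, giving $X\simeq \operatorname{Bl}_{p,q}(Y)$. Iterating Proposition~\ref{prop:blowup-fano} twice then yields $c_X\geq c_Y+2\geq 2$, directly contradicting $c_X\leq 1$ without any case analysis or recourse to the classification of nef-complexity-zero varieties. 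You instead apply Proposition~\ref{prop:blowup-fano} only once, invoke Corollary~\ref{cor:mukai-type-klt-fano} to pin $Y_i$ down as one of the three products of projective spaces, and then argue case by case: for $\rho(X)=2$ the explicit extremal-ray structure of $\operatorname{Bl}_p\mathbb{P}^3$ rules out a second (E2) ray, and for $\rho(X)\geq 3$ the strict transform of a $\mathbb{P}^1$-ruling fiber through the blown-up point is $(-K_X)$-trivial, contradicting ampleness. Both arguments are sound; yours avoids the disjointness lemma (a nontrivial though standard fact that the paper leaves implicit) at the cost of a longer case analysis and a dependence on the full $c_Y=0$ classification, whereas the paper's is shorter and exploits the double-blow-down structure more directly. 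One cosmetic remark: you note $Y_1\simeq Y_2$ but never use it — the whole argument runs with a single $Y_i$.
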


\begin{proof}
Suppose that both rays of length two are of type (E2). Then the corresponding exceptional divisors are disjoint. Hence $X\simeq \operatorname{Bl}_{p,q}(Y)$, the blow-up of a smooth threefold $Y$ at two distinct points $p,q$. By Proposition~\ref{prop:blowup-fano}, this contradicts the assumption $c_X\leq 1$. Therefore there cannot be two rays of type (E2).
\end{proof}

\begin{proposition}
Let $X$ be a smooth Fano threefold with $c_X\leq 1$. Assume $X$ has exactly two extremal rays $R_1, R_2$ of length two.
If the contraction of $R_1$ is of type (E2), then $X\simeq \operatorname{Bl}_p(\mathbb{P}^3)$.
\end{proposition}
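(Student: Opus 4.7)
The plan is to use the (E2) hypothesis on $R_1$ to express $X$ as the blow-up at a point of some smooth threefold $Y$, then to invoke the nef-complexity bound already proved in Proposition~\ref{prop:blowup-fano} together with Corollary~\ref{cor:mukai-type-klt-fano} to force $Y$ to be a product of projective spaces, and finally to rule out all candidates for $Y$ other than $\mathbb{P}^3$ by showing that the blow-up at a single point fails to be Fano in those cases.

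Concretely, the (E2) assumption on $R_1$ yields a birational contraction $\pi\colon X \to Y$ onto a smooth threefold $Y$, with exceptional divisor $E \simeq \mathbb{P}^2$, such that $X \simeq \operatorname{Bl}_p(Y)$ for some point $p\in Y$ and $-K_X = \pi^*(-K_Y) - 2E$. Proposition~\ref{prop:blowup-fano} then gives that $Y$ is itself a smooth Fano threefold with $c_Y \leq c_X - 1 \leq 0$, and combining this with Corollary~\ref{cor:mukai-type-klt-fano} forces $c_Y = 0$. Hence $Y$ is a product of projective spaces, and since $\dim Y = 3$ the only candidates are
\[
Y \in \{\, \mathbb{P}^3,\ \mathbb{P}^2 \times \mathbb{P}^1,\ (\mathbb{P}^1)^3 \,\}.
\]

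The remaining task is to exclude $Y = \mathbb{P}^2 \times \mathbb{P}^1$ and $Y = (\mathbb{P}^1)^3$. In either case I would exhibit, through the chosen point $p$, an irreducible smooth rational curve $C$ with $-K_Y \cdot C = 2$: for $p = (p_1,p_2) \in \mathbb{P}^2 \times \mathbb{P}^1$ take $C = \{p_1\} \times \mathbb{P}^1$, and for $p = (p_1,p_2,p_3) \in (\mathbb{P}^1)^3$ take $C = \{p_1\} \times \{p_2\} \times \mathbb{P}^1$. Since $C$ passes through $p$ with multiplicity one, its strict transform $\widetilde{C}$ satisfies $E \cdot \widetilde{C} = 1$ and $\pi_*\widetilde{C} = C$, and the projection formula gives
\[
-K_X \cdot \widetilde{C} \;=\; \pi^*(-K_Y)\cdot \widetilde{C} - 2\, E \cdot \widetilde{C} \;=\; -K_Y \cdot C - 2 \;=\; 0,
\]
contradicting the ampleness of $-K_X$. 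Therefore $Y \simeq \mathbb{P}^3$, and $X \simeq \operatorname{Bl}_p(\mathbb{P}^3)$, as claimed.

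I do not foresee any real obstacle: the heavy lifting is done by Proposition~\ref{prop:blowup-fano} and Corollary~\ref{cor:mukai-type-klt-fano}, which together reduce everything to a finite check, and the non-Fano verification is a one-line intersection computation. The only minor point requiring attention is making sure the chosen curve $C$ genuinely passes through the prescribed point $p$ and is smooth with multiplicity one there, so that the formula $-K_X \cdot \widetilde{C} = -K_Y \cdot C - 2$ applies verbatim; in both excluded cases this is immediate.
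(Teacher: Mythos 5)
The proof is correct, but it takes a genuinely different route from the paper's. The paper first analyzes the other length-two ray $R_2$: by Lemma~\ref{lem:noD2} and Proposition~\ref{prop:two:rays}, its contraction must be a $\mathbb{P}^1$-bundle $\psi\colon X\to S$, and since the exceptional $\mathbb{P}^2$ of the (E2) contraction dominates $S$, one deduces $S\simeq\mathbb{P}^2$ and hence $\rho(X)=2$, $\rho(Y)=1$; then $\tau_X\le\tau_Y=i(Y)$ together with $\tau_X\ge 4$ and Kobayashi--Ochiai gives $Y\simeq\mathbb{P}^3$. You instead bypass $R_2$ entirely: from Proposition~\ref{prop:blowup-fano} you get $c_Y\le c_X-1\le 0$, and combining with Corollary~\ref{cor:mukai-type-klt-fano} forces $c_Y=0$, so $Y$ is a product of projective spaces of dimension three; you then exclude $\mathbb{P}^2\times\mathbb{P}^1$ and $(\mathbb{P}^1)^3$ by exhibiting through $p$ a degree-$2$ rational curve whose strict transform has $-K_X$-degree zero. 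Your route is slightly more economical in that it does not use the hypothesis ``exactly two extremal rays of length two'' at all, nor the auxiliary lemmas on the contraction type of $R_2$; what you pay for this is the explicit intersection check, which the paper's argument avoids by deducing $\rho(Y)=1$ directly. Both the computation $-K_X=\pi^*(-K_Y)-2E$ and the verification $E\cdot\widetilde C=1$ are as you state, and the curves you pick are indeed smooth through $p$ with $-K_Y\cdot C=2$, so the argument goes through.
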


\begin{proof}
By Lemma~\ref{lem:noD2} and Proposition~\ref{prop:two:rays}, the contraction of $R_2$ is a $\mathbb{P}^1$-bundle $\psi: X\to S$. The exceptional divisor $E\simeq \mathbb{P}^2$ of the (E2) contraction $\varphi: X \to Y$ maps onto $S$ via $\psi$. By \cite{laz-im} (see also \cite[V.~Corollary~3.5]{KR}), $S\simeq \mathbb{P}^2$. Combining this with Proposition~\ref{prop:blowup-fano}, we see that $Y$ is a smooth Fano threefold with  $\rho(Y)=1$
satisfying
\[
4=\rho(X)+2=\tau_X\le \tau_Y=i(Y).
\]
Thus the Kobayashi--Ochiai theorem \cite{kooc} yields $Y\simeq \mathbb{P}^3$.
\end{proof}

\begin{proposition}
Let \(X\) be a smooth Fano threefold with \(c_X\le 1\).
If \(X\) has two extremal rays of length \(2\), both giving \(\mathbb P^1\)-bundle structures, then
\[
X \ \simeq\ \mathbb P^1\times\mathbb P^1\times\mathbb P^1,\qquad
\mathbb P(T_{\mathbb P^2}),\qquad
\text{or}\qquad
\mathbb P^1\times \operatorname{Bl}_p(\mathbb P^2).
\]
\end{proposition}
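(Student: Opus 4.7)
The plan is to leverage the two $\mathbb{P}^1$-bundle structures $\varphi_i\colon X \to S_i$ ($i=1,2$) together with the sharp bounds on $\tau_X$ established earlier in order to pin down $\rho(X) \in \{2,3\}$, and then to identify $X$ in each case. First I would record that, by Lemma \ref{big.anticanonical.lemma}, each $S_i$ is a smooth Fano surface of Picard number $\rho(X)-1$, hence a del Pezzo surface. Next, the hypothesis $c_X \le 1$ forces $\tau_X \ge \rho(X)+2$, while Proposition \ref{prop:tau-length:ineq}, combined with Propositions \ref{prop:length3} and \ref{prop:two-extremal-length2} (which together forbid extremal rays of length $\ge 3$ and show that at most two extremal rays of length $2$ occur outside the case $X \simeq (\mathbb{P}^1)^3$), yields the reverse bound $\tau_X \le \rho(X)+2$, so $\tau_X = \rho(X)+2$. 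Theorem \ref{introthm:charct-p1n} then gives $\tau_X \le 2\dim X = 6$, with equality characterizing $(\mathbb{P}^1)^3$ (which has $\rho = 3$); hence $\rho(X) \in \{2,3\}$.

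For the case $\rho(X) = 2$, both $S_i$ have Picard number $1$, so $S_i \simeq \mathbb{P}^2$. Setting $H_i = \varphi_i^\ast \mathcal{O}_{\mathbb{P}^2}(1)$ and writing $-K_X = a_1 H_1 + a_2 H_2$, the length identities $-K_X \cdot f_i = 2$ translate to $a_2 d = a_1 d' = 2$, where $d, d'$ are the degrees of the induced maps $\varphi_2|_{f_1}$ and $\varphi_1|_{f_2}$ on the $\mathbb{P}^1$-fibers (both strictly positive, since $R_1 \neq R_2$). The maximality $\tau_X = 4$, together with the fact that $H_1, H_2$ are primitive nef Cartier classes spanning $\operatorname{Nef}(X)$, forces $a_1 + a_2 = 4$ and hence $1/d + 1/d' = 2$, so $d = d' = 1$. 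Each fiber of $\varphi_1$ therefore maps isomorphically onto a line in $\mathbb{P}^2$ via $\varphi_2$, and the joint morphism $(\varphi_1, \varphi_2)\colon X \to \mathbb{P}^2 \times \mathbb{P}^2$ identifies $X$ with the incidence divisor, i.e., the flag variety $\mathbb{F}(1,2;3) \simeq \mathbb{P}(T_{\mathbb{P}^2})$.

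For the case $\rho(X) = 3$, each $S_i$ is a smooth del Pezzo surface of Picard number $2$, so $S_i \in \{\mathbb{P}^1 \times \mathbb{P}^1,\, \mathrm{Bl}_p(\mathbb{P}^2)\}$. If $S_1 \simeq \mathrm{Bl}_p(\mathbb{P}^2) = \mathbb{F}_1$, composing $\varphi_1$ with the $\mathbb{P}^1$-bundle $\mathbb{F}_1 \to \mathbb{P}^1$ produces a fibration $X \to \mathbb{P}^1$; comparing this with $\varphi_2$ via a splitting argument on the associated rank-$2$ bundle identifies $X$ with $\mathbb{P}^1 \times \mathrm{Bl}_p(\mathbb{P}^2)$. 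If instead $S_1 = S_2 = \mathbb{P}^1 \times \mathbb{P}^1$, composing $\varphi_i$ with each of the two rulings of one of the bases yields additional length-$2$ $\mathbb{P}^1$-bundle structures on $X$, forcing $X \simeq (\mathbb{P}^1)^3$. The residual subcase $S_1 = S_2 = \mathrm{Bl}_p(\mathbb{P}^2)$ must be excluded separately by a short intersection-theoretic check using the preimage of the unique $(-1)$-curve in each base.

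The main obstacle lies in the $\rho(X) = 2$ step: verifying that the image of the bidegree $(1,1)$ map $(\varphi_1, \varphi_2)$ is precisely the smooth incidence hypersurface in $\mathbb{P}^2 \times \mathbb{P}^2$, and not merely a divisor numerically of that class. A related subtlety appears in the $\rho(X) = 3$ case, where the splitting of the rank-$2$ bundle defining $\varphi_1$ over $\mathbb{F}_1$ requires analysis of its restriction along the fibers of $\mathbb{F}_1 \to \mathbb{P}^1$ and a compatibility check with the second $\mathbb{P}^1$-bundle structure $\varphi_2$.
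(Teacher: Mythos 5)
Your overall plan — bound $\tau_X$ to pin down $\rho(X)\in\{2,3\}$, then identify $X$ in each case — is reasonable, and the reduction to $\rho(X)\in\{2,3\}$ is sound. However, the argument has several concrete gaps.

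First, a small but real issue: you cite Lemma~\ref{big.anticanonical.lemma} to conclude that each $S_i$ is a \emph{smooth Fano} surface, but that lemma only gives that $S_i$ is of Fano type, and it says nothing about smoothness. What is actually needed here (and used in the paper) is that the image of a smooth Fano under a smooth extremal contraction is again a smooth Fano, which is \cite[Cor.~2.9]{KMM}.

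In the $\rho(X)=2$ case your outline is essentially completable: once $d=d'=1$, the map $(\varphi_1,\varphi_2)\colon X\to\mathbb{P}^2\times\mathbb{P}^2$ is finite and birational onto a divisor of bidegree $(1,1)$; one then has to argue the defining bilinear form is nondegenerate (a degenerate form would force a whole $\{p\}\times\mathbb{P}^2$ into the image, contradicting that $\varphi_1$-fibers map to lines), so the image is smooth and the map is an isomorphism. You flag this as the ``main obstacle'' but do not resolve it. The paper sidesteps this entirely by invoking the theorem of \cite{OSWW} that a smooth Fano with two distinct $\mathbb{P}^1$-bundle structures is rational homogeneous, then classifying rank-two $G/B$'s; that is a substantially shorter route.

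The $\rho(X)=3$ case is where the proposal genuinely breaks down. You case on $S_1,S_2\in\{\mathbb{P}^1\times\mathbb{P}^1,\ \mathbb{F}_1\}$, but you never establish any relation between the two bundle structures. The decisive structural input in the paper is the commutative square from \cite{Kane17,Kane22}, which produces smooth elementary contractions $g_i\colon S_i\to Z$ over a \emph{common} base $Z$, and hence (when $\dim Z=1$, so $Z\simeq\mathbb{P}^1$) an isomorphism $X\simeq S_1\times_{\mathbb{P}^1}S_2$ via Zariski's Main Theorem. Without this, the claim ``composing with each of the two rulings yields additional length-$2$ $\mathbb{P}^1$-bundle structures'' is false as stated — composing $\varphi_1$ with a ruling of $\mathbb{P}^1\times\mathbb{P}^1$ gives a fibration $X\to\mathbb{P}^1$ with surface fibers, not a $\mathbb{P}^1$-bundle — and the ``splitting argument on the associated rank-$2$ bundle'' and the ``short intersection-theoretic check'' to exclude $S_1=S_2=\mathbb{F}_1$ are not actual arguments. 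The exclusion of $S_1=S_2=\mathbb{F}_1$ in particular is a nontrivial step: once one has $X\simeq S_1\times_{\mathbb{P}^1}S_2$, the paper computes $K_X\cdot\Gamma=0$ for $\Gamma=C_1\times_Z C_2$ (the product of the two $(-1)$-sections), showing $X$ is not Fano; without the fiber-product description, your proposed intersection-theoretic check has no model to compute on.

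In short: the $\rho=2$ case needs the nondegeneracy argument to be written out, and the $\rho=3$ case is missing the key structural lemma (the Kaneko diagram and the resulting identification $X\simeq S_1\times_{\mathbb{P}^1}S_2$) without which the case analysis cannot be carried out.
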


\begin{proof}
Let $f_i \colon X\to S_i$ ($i=1,2$) be the two $\mathbb P^1$-bundle structures associated with the two length-$2$ extremal rays.
By \cite[Thm.~2.2]{Kane17} (see also \cite[Thm.~2.3]{Kane22}), there exist smooth elementary contractions
$g_i:S_i\to Z$ fitting into a commutative diagram
\[
  \xymatrix{
    X \ar[r]^{f_1} \ar[d]_{f_2} & S_1 \ar[d]^{g_1} \\
    S_2 \ar[r]_{g_2} & Z.
  }
\]
Since each $f_i$ is a smooth contraction from a smooth Fano threefold,
\cite[Cor.~2.9]{KMM} implies that each $S_i$ is a smooth del Pezzo surface.
We distinguish cases according to $\dim Z$.

\medskip\noindent
\textbf{Case $\dim Z=0$.}
Then $S_1\simeq S_2\simeq \mathbb P^2$.
By the theorem of \cite[Thm.~1.2]{OSWW},
any smooth Fano threefold $X$ admitting two distinct $\mathbb{P}^1$-bundle
structures is isomorphic to a rational homogeneous variety of the form
$G/B$, where $G$ is a semisimple linear algebraic group and
$B\subset G$ a Borel subgroup.
Since $\rho(X)=2$, the group $G$ has rank two. For a semisimple linear
algebraic group $G$ with root system $\Phi$, one has
\[
  \dim(G/B)=|\Phi^{+}|,
\]
the number of positive roots. In rank two there are only the following
Dynkin types:
\[
  A_1\times A_1,\ A_2,\ B_2(=C_2),\ G_2,
\]
for which $|\Phi^{+}|$ equals $2,3,4,6$, respectively. Hence the only
possibility with $\dim(G/B)=3$ is the type $A_2$. Therefore
\[
  G/B \simeq \mathrm{SL}_3/B \simeq \mathbb{P}(T_{\mathbb{P}^2}),
\]
and we conclude $X\simeq \mathbb{P}(T_{\mathbb{P}^2})$.

\medskip\noindent
\textbf{Case $\dim Z=1$.}
Then $Z\simeq \mathbb P^1$, and each $g_i:S_i\to Z$ is a $\mathbb P^1$-bundle.
The only Hirzebruch surfaces that are del Pezzo surfaces are $\mathbb F_0\simeq \mathbb P^1\times\mathbb P^1$ and
$\mathbb F_1\simeq \operatorname{Bl}_p(\mathbb P^2)$, hence
\[
S_i\ \simeq\ \mathbb P^1\times\mathbb P^1\quad\text{or}\quad \operatorname{Bl}_p(\mathbb P^2).
\]
Consider the natural morphism
\[
\Psi \coloneqq (f_1,f_2):X\longrightarrow S_1\times_Z S_2.
\]
For each $z\in Z$, the fiber $X_z \coloneqq \Psi^{-1}(z)$ is smooth and admits two $\mathbb P^1$-bundle
structures over $g_i^{-1}(z)\simeq\mathbb P^1$.
A Hirzebruch surface $\mathbb F_e$ has two rulings \emph{iff} $e=0$; hence
$X_z\simeq \mathbb P^1\times\mathbb P^1$, and $\Psi|_{X_z}$ is an isomorphism.
Thus $\Psi$ is proper and quasi-finite of degree $1$, hence finite; the target is smooth
(since $g_i$ are smooth), thus normal, and Zariski’s Main Theorem yields
\begin{equation}\label{eq:fiberproduct}
X\ \simeq\ S_1\times_Z S_2.
\end{equation}

We analyze the possibilities for $(S_1,S_2)$.

\smallskip
\emph{(i) $S_1\simeq S_2\simeq \mathbb P^1\times\mathbb P^1$.}
We get
\[
X\ \overset{\eqref{eq:fiberproduct}}{\simeq}\ 
(\mathbb P^1\times\mathbb P^1)\times_{\mathbb P^1}(\mathbb P^1\times\mathbb P^1)
\ \simeq\ \mathbb P^1\times\mathbb P^1\times\mathbb P^1.
\]

\smallskip
\emph{(ii) $S_1\simeq \mathbb P^1\times\mathbb P^1$ and $S_2\simeq \operatorname{Bl}_p(\mathbb P^2)$.}
We get
\[
X\ \overset{\eqref{eq:fiberproduct}}{\simeq}\ 
(\mathbb P^1\times\mathbb P^1)\times_{\mathbb P^1}\operatorname{Bl}_p(\mathbb P^2)
\ \simeq\ \mathbb P^1\times \operatorname{Bl}_p(\mathbb P^2).
\]

\smallskip
\emph{(iii) $S_1\simeq S_2\simeq \operatorname{Bl}_p(\mathbb P^2)$.}
Write $S_i\simeq \mathbb F_1$ with negative section $C_i$ and fiber $F_i$,
and let $f_i:X\to S_i$ and $h:X\to Z$ be the projections in \eqref{eq:fiberproduct}.
For smooth fiber products over a curve,
\[
K_X\ =\ f_1^{\ast}K_{S_1/Z}\ +\ f_2^{\ast}K_{S_2/Z}\ +\ h^{\ast}K_Z,
\qquad
K_{S_i/Z}\ =\ K_{S_i}-g_i^{\ast}K_Z\ =\ -2C_i-F_i,
\]
while $K_Z=K_{\mathbb P^1}$ has degree $-2$.
Consider the curve $\Gamma \coloneqq C_1\times_Z C_2\simeq\mathbb P^1$ in $X$.
Then
\begin{eqnarray*}
K_X\cdot \Gamma &=& (K_{S_1/Z}\cdot C_1)+(K_{S_2/Z}\cdot C_2)+(K_Z\cdot h_{\ast}\Gamma) \\
&=& \bigl(-2C_1-F_1\bigr)\!\cdot C_1\ +\ \bigl(-2C_2-F_2\bigr)\!\cdot C_2\ -2 \\
&=& 1+1-2=0
\end{eqnarray*}
since on $\mathbb F_1$ one has $C_i^2=-1$ and $C_i\cdot F_i=1$.
Hence $(-K_X\cdot \Gamma)=0$, so $-K_X$ is not ample and $X$ is not Fano.
This case is excluded.

\medskip
Combining the $\dim Z=0$ conclusion with (i)–(ii) completes the proof.
\end{proof}

Summing up, we obtain the following:

\begin{theorem}\label{thm:smooth-fano-3fold-cx=1}
Let $X$ be a smooth Fano threefold with nef complexity $c_X\leq 1$. 
If $c_X<1$, then $c_X=0$ and hence $X$ is a product of projective spaces.
If $c_X=1$, then $X$ is isomorphic to one of
\[
Q^3,\quad \operatorname{Bl}_p(\mathbb{P}^3),\quad \mathbb{P}^1\times \operatorname{Bl}_p(\mathbb{P}^2),
\quad \mathbb{P}(T_{\mathbb{P}^2}),\quad 
\mathbb{P}\bigl(\mathcal{O}_{\mathbb{P}^1} (1)\oplus \mathcal{O}_{\mathbb{P}^1}^{\oplus 2}\bigr).
\]
\end{theorem}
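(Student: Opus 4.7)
The plan is to assemble the classification by stringing together the propositions already proved earlier in this section, so that the role of the theorem is essentially to \emph{collect} the cases rather than to introduce new geometric input. The starting observation is that, by the definition of nef complexity, $c_X \leq 1$ is equivalent to $\tau_X \geq \dim X + \rho(X) - 1 = \rho(X) + 2$, while Theorem~\ref{mukai type gklt} gives $c_X \geq 0$ with equality exactly when $X$ is a product of projective spaces. Thus it suffices to enumerate all smooth Fano threefolds satisfying the inequality $\tau_X \geq \rho(X) + 2$, and then check which ones have $c_X = 0$ versus $c_X = 1$.

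First I would split according to the Picard number. The case $\rho(X) = 1$ is dispatched by the proposition preceding Proposition~\ref{prop:length3}: it reduces $c_X \leq 1$ to Fano index $\geq 3$, and so by Kobayashi--Ochiai~\cite{kooc} forces $X \simeq \mathbb{P}^3$ or $X \simeq Q^3$. When $\rho(X) > 1$, I would first invoke Proposition~\ref{prop:length3}: if some extremal ray has length at least three, then the classification of length-$3$ contractions on smooth threefolds yields $X \simeq \mathbb{P}^1 \times \mathbb{P}^2$ (with $c_X = 0$) or $X \simeq \mathbb{P}\bigl(\mathcal{O}_{\mathbb{P}^1}(1) \oplus \mathcal{O}_{\mathbb{P}^1}^{\oplus 2}\bigr)$ (with $c_X = 1$, as already computed in that proposition).

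Otherwise all extremal rays have length $\leq 2$. By Proposition~\ref{prop:two-extremal-length2}, Fujita's bound combined with Proposition~\ref{prop:tau-length:ineq} and the inequality $\tau_X \geq \rho(X)+2$ forces either $X \simeq (\mathbb{P}^1)^3$ or $X$ to possess exactly two extremal rays of length two (with all others of length one). In the latter situation, Lemma~\ref{lem:noD2} rules out type (D2), and Proposition~\ref{prop:two:rays} together with Proposition~\ref{prop:blowup-fano} ensures that at most one length-two ray is of type (E2). If one of them is of type (E2), the proposition immediately preceding handles this to yield $X \simeq \operatorname{Bl}_p(\mathbb{P}^3)$. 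If both length-two rays are of type (C2), I would invoke the last proposition of the section, which uses the structure theorem on Fano manifolds with two $\mathbb{P}^1$-bundle structures and a fiber-product analysis over $Z$ with $\dim Z \in \{0,1\}$, to conclude $X \simeq (\mathbb{P}^1)^3$, $\mathbb{P}(T_{\mathbb{P}^2})$, or $\mathbb{P}^1 \times \operatorname{Bl}_p(\mathbb{P}^2)$.

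Finally, I would separate the two complexity regimes by direct verification. Theorem~\ref{mukai type gklt} identifies $c_X = 0$ with $X$ a product of projective spaces, so on the assembled list the varieties with $c_X = 0$ are exactly $\mathbb{P}^3$, $\mathbb{P}^2 \times \mathbb{P}^1$, and $(\mathbb{P}^1)^3$. For each of the remaining varieties $Y$ I would exhibit a nef decomposition of $-K_Y$ attaining $\tau_Y = \rho(Y) + 2$: for $Q^3$ via its index $3$; for $\operatorname{Bl}_p(\mathbb{P}^3)$ and $\mathbb{P}\bigl(\mathcal{O}_{\mathbb{P}^1}(1) \oplus \mathcal{O}_{\mathbb{P}^1}^{\oplus 2}\bigr)$ via the explicit formulas in Proposition~\ref{prop:length3} and Proposition~\ref{prop:blowup-fano}; for $\mathbb{P}(T_{\mathbb{P}^2})$ using its two $\mathbb{P}^1$-bundle structures over $\mathbb{P}^2$; and for $\mathbb{P}^1 \times \operatorname{Bl}_p(\mathbb{P}^2)$ combining the surface computation in Theorem~\ref{thm:main.dP} with the obvious nef pullback from the $\mathbb{P}^1$-factor. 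Combined with Proposition~\ref{prop:tau-length:ineq} as an upper bound, this pins down $c_Y = 1$ in each case. The one place where real care is needed, rather than mere assembly, is ensuring that the exclusion of type (D2) and the fiber-product analysis for two $\mathbb{P}^1$-bundle structures do not leave a stray case; that is the spot where the argument rests most heavily on the Occhetta--Sol\`a Conde--Watanabe--Wisniewski structure theorem and on a careful computation of $(-K_X \cdot \Gamma)$ on the relevant fiber product.
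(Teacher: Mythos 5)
Your proposal is correct and follows essentially the same route as the paper: the theorem is, as you say, a collation of the preceding propositions of Section~\ref{sec:smooth-Fano-3folds}, split by $\rho(X)$, then by maximal ray length, then by contraction type of the two length-two rays (with (D2) excluded and at most one (E2) allowed), together with the terminal verification that each variety on the list with $\rho(X)>1$ not isomorphic to a product of projective spaces has $\tau_X$ pinned exactly to $\rho(X)+2$ by the length bound of Proposition~\ref{prop:tau-length:ineq} and an explicit nef decomposition.
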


\section{Fano varieties with many Mori fiber structures}\label{many MFS:section} In this section, we prove the Mukai conjecture for Fano varieties with many Mori fiber structures. Let us quickly explain the notation used here.
For a smooth projective variety $X$, let us consider the space of rational curves $\mathrm{RatCurves}^n(X)$, which admits a natural morphism $\mathrm{RatCurves}^n(X)\to \mathrm{Chow}(X)$  (for details, see \cite[Section~II.2]{KR}). A {\it family of rational curves} $\mathcal{M}$ on $X$ means an irreducible component of $\mathrm{RatCurves}^n(X)$.
The family $\mathcal{M}$ is called {\it unsplit} if it is proper. 
The family $\mathcal{M}$ is called a {\it dominating family} if the union of the rational curves parametrized by $\mathcal{M}$ is dense in $X$.
For a $K_X$-negative extremal contraction of fiber type 
$\varphi \colon  X \to Y$, we may find a dominating family of rational curves whose members are contracted by $\varphi$.
Among such families, 
one with the minimal anticanonical degree is called a {\it minimal dominating family of rational curves associated with $\varphi$}. 

By using the idea of proving \cite[Proposition~4~(4)]{mosw}, we show the following.

\begin{proposition}\label{linear-ind}Let $X$ be a smooth Fano variety, and let $R_1, \ldots, R_m \subset \overline{\mathrm{NE}}(X)$ be distinct extremal rays of fiber type. The rays $R_1, \ldots, R_m$ are linearly independent in $\mathrm{N}_1(X)$. In particular, $m\leq \rho(X)$.
\end{proposition}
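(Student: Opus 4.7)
The plan is to exploit the elementary nature of each contraction $\varphi_i : X \to Y_i$ by pairing curve classes with pullbacks of ample divisors from the bases. For each $i$, fix a class $[C_i]$ generating $R_i$ (for instance, the class of a minimal rational curve in the fiber-type family $\mathcal M_i$), and choose an ample divisor $A_i$ on $Y_i$; set $H_i := \varphi_i^\ast A_i$, a nef divisor on $X$. Since $\varphi_i$ is an elementary Mori fiber contraction, $\ker(\varphi_i)_\ast = \mathbb{R} R_i \subset \mathrm{N}_1(X)_{\mathbb{R}}$, so for $j \neq i$ the distinctness of the rays forces $(\varphi_i)_\ast[C_j] \neq 0$, and by ampleness of $A_i$ we obtain $H_i \cdot [C_j] > 0$, while $H_i \cdot [C_i] = 0$.

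The proof would then proceed by induction on $m$, the case $m=1$ being vacuous. For the inductive step, assume a nontrivial relation $\sum_{i=1}^{m} a_i [C_i] = 0$. Pushing forward along $\varphi_m$ annihilates the last term and produces the relation $\sum_{i < m} a_i (\varphi_m)_\ast[C_i] = 0$ in $\mathrm{N}_1(Y_m)_{\mathbb{R}}$. The goal is to show that the $m-1$ pushforward classes $(\varphi_m)_\ast[C_i]$ for $i < m$ are linearly independent; this would force $a_i = 0$ for $i < m$, and the original relation would collapse to $a_m [C_m] = 0$, which yields $a_m = 0$. The bound $m \leq \rho(X)$ follows at once.

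The main obstacle is precisely this last step: $Y_m$ is no longer smooth Fano but only $\mathbb{Q}$-factorial klt of Fano type, so the inductive hypothesis cannot be applied verbatim to the pushforwards. One way around this is to stay entirely on $X$ and analyze the matrix $M := (H_i \cdot [C_j])_{i,j}$, whose diagonal vanishes while all off-diagonal entries are strictly positive; linear independence of $[C_1], \ldots, [C_m]$ is equivalent to nonsingularity of $M$. Since each $A_i$ may be chosen freely in $\mathrm{Amp}(Y_i)$, the $i$-th row of $M$ can be independently perturbed via the linear map $A \mapsto (A \cdot (\varphi_i)_\ast [C_j])_{j \neq i}$, and generic choices force $\det M \neq 0$, provided the classes $\{(\varphi_i)_\ast[C_j]\}_{j \neq i}$ span a subspace of $\mathrm{N}_1(Y_i)_{\mathbb{R}}$ of dimension $m - 1$. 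Alternatively, one can invoke the theory of unsplit covering families of rational curves (after Koll\'ar and Bonavero--Casagrande--Debarre--Druel) applied to the minimal dominating families $\mathcal M_1, \ldots, \mathcal M_m$: since each $\mathcal M_i$ is unsplit and covers $X$, the combined rational quotient $X \dashrightarrow T$ satisfies $\rho(X) - \rho(T) \geq m$, from which the linear independence of the $[C_i]$ and the bound $m \leq \rho(X)$ follow at once.
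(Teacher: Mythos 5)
Your first route correctly identifies the obstruction (the base $Y_m$ need not be smooth Fano, so the inductive hypothesis does not transfer) but does not overcome it. The matrix route has a logical gap: nonsingularity of $M$ implies independence of the $[C_i]$, not conversely, and the perturbation argument needs the classes $(\varphi_i)_*[C_j]$ for $j\neq i$ to span an $(m-1)$-dimensional subspace of $\mathrm{N}_1(Y_i)$, an assumption essentially equivalent to what you want to prove. The rc-quotient route is closest in spirit to the paper's, but the inequality $\rho(X)-\rho(T)\geq m$ is not a black-box consequence of unsplitness: the standard result says that $\ker\bigl(\mathrm{N}_1(X)\to\mathrm{N}_1(T)\bigr)$ is \emph{spanned} by the numerical classes of the families, and the dimension of that span is precisely the quantity at issue, so as stated the argument is circular.

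The paper supplies the missing idea by combining the rc-quotient machinery with an argument by contradiction that exploits extremality. Suppose $[C_m]\equiv\sum_{i<m}a_i[C_i]$; extremality of $R_m$ forces some coefficient to be negative, say $a_{m-1}<0$. Now form the rationally connected quotient $\pi\colon X\dashrightarrow Z$ with respect to only $\overline{\mathcal M_1},\dots,\overline{\mathcal M_{m-2}}$, deliberately omitting both $\mathcal M_{m-1}$ and $\mathcal M_m$. By \cite[Corollary~4.4]{aco}, $\dim Z>0$, and since the indeterminacy locus of $\pi$ has codimension at least two one can pull back an ample divisor $H$ on $Z$ to a class $\pi^*H\in\mathrm{Pic}(X)$ satisfying $\pi^*H\cdot C_i=0$ for $i\leq m-2$ and $\pi^*H\cdot C_{m-1}>0$, the latter because a general member of the dominating family $\mathcal M_{m-1}$ is not contracted by $\pi$. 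The relation then gives $\pi^*H\cdot C_m=a_{m-1}(\pi^*H\cdot C_{m-1})<0$, contradicting that a general member of the dominating family $\mathcal M_m$ avoids the indeterminacy locus and hence satisfies $\pi^*H\cdot C_m\geq 0$. The two ideas you are missing are: (i) using extremality to pin down a strictly negative coefficient, and (ii) quotienting by a carefully chosen \emph{proper subset} of the families so as to manufacture a class on $X$ that detects the sign of that coefficient.
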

 
\begin{proof} We denote by $\varphi_i \colon  X\to X_i$ the contraction of an extremal ray $R_i$, and we fix a curve $[C_i]\in R_i$ for any $i$. Let $\mathcal{M}_i$ be a
minimal dominating family of rational curves associated with $\varphi_i$. By a slight abuse of notation, we denote by $\overline{\mathcal{M}_i}$ the closure of the image of $\mathcal{M}_i\subset \mathrm{RatCurves}^n(X)$ in $\mathrm{Chow}(X)$. 

First, consider the case where $m\leq \rho(X)$. By induction on $m$, we prove that $R_1, \ldots, R_m$ are linearly independent. When $m=1$ or $m=2$, there is nothing to prove. So, assuming that $R_1, \ldots, R_{m-1}$ are linearly independent for $m$ $(3\leq m\leq \rho(X))$, we show that $R_1, \ldots, R_m$ are linearly independent. To prove this, assume the contrary, that is, $R_m\subset \langle [C_1], \ldots, [C_{m-1}]\rangle$. We have $[C_m]\equiv \sum_{i=1}^{m-1}a_i[C_i]$ for some $a_i \in \mathbb{Q}$. By the extremality of the ray $R_m$, there exists $j \in \{1, \ldots, m-1\}$ such that $a_j<0$. Without loss of generality, assume $a_{m-1}<0$.   
Consider the rc$(\overline{\mathcal{M}_1}, \ldots, \overline{\mathcal{M}_{m-2}})$-fibration $\pi \colon  X \dashrightarrow  Z$. Then \cite[Corollary~4.4]{aco} implies that $\dim Z>0$. Let $W\subset X$ be the indeterminacy locus of $\pi$.
Since $X$ is normal, the codimension of $W\subset X$ is at least $2$. Thus, we may consider $\pi^{\ast}H\in \mathrm{Pic}(X)$ for some ample divisor $H$ on $Z$. 
General fibers of $\pi$ are chains of curves from $\overline{\mathcal{M}_1}, \ldots, \overline{\mathcal{M}_{m-2}}$. 
Since $\varphi_{m-1}$ is a fiber type contraction, a general curve $C_{m-1}$ from $\mathcal{M}_{m-1}$ dominates $X$ 
and hence is not contracted by $\pi$. Therefore, $\pi^\ast H\cdot C_{m-1}>0$. We also have $\pi^{\ast}H\cdot C_i=0$ for $i=1,\ldots, m-2$.
 Thus, we obtain $\pi^{\ast}H\cdot C_m=a_{m-1}(\pi^{\ast}H\cdot C_{m-1})<0$. This is a contradiction. Consequently, $R_1, \ldots, R_m$ are linearly independent if $m\leq \rho(X)$.

Secondly, let us consider the case where $m> \rho(X)$. From the first part, $R_1, \ldots, R_{\rho(X)}$ are linearly independent. As in the first half, we have $[C_m]\equiv \sum_{i=1}^{\rho(X)}a_i'[C_i]$ for some $a_i' \in \mathbb{Q}$, and we may assume that $a_{\rho(X)}'<0$. Consider the rc$(\overline{\mathcal{M}_1}, \ldots, \overline{\mathcal{M}_{\rho(X)-1}})$-fibration $\pi' \colon  X \dashrightarrow Z'$ and apply the same argument as in the first half, we obtain a contradiction. Thus, we see that $m\leq \rho(X)$.
\end{proof}

The following theorem presents an interesting property of a smooth Fano variety such that the pseudo-effective cone $\mathrm{Pseff}(X)$ coincides with the nef cone $\mathrm{Nef}(X)$ in the N\'eron--Severi group.
This gives an answer to a question of J. Starr \cite{sta}.

\begin{theorem}\label{simpliciality}Let $X$ be a smooth Fano variety such that $\mathrm{Pseff}(X)=\mathrm{Nef}(X)$. Then the Kleiman--Mori cone $\overline{\mathrm{NE}}(X)$ and the nef cone $\mathrm{Nef}(X)$ are simplicial. 
\end{theorem}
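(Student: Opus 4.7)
The plan is to deduce from the hypothesis that every elementary extremal contraction of $X$ is of fiber type, and then combine this with Proposition~\ref{linear-ind} to conclude that $\overline{\mathrm{NE}}(X)$ has exactly $\rho(X)$ linearly independent extremal rays. Duality then yields the corresponding statement for $\mathrm{Nef}(X)$.

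First, I would verify that under the hypothesis $\mathrm{Pseff}(X) = \mathrm{Nef}(X)$ no elementary extremal contraction is birational. If $\varphi_R \colon X \to Y$ were a divisorial contraction with exceptional divisor $E$ and $C$ a curve generating $R$, then $E \cdot C < 0$, so $E$ is effective (hence pseudo-effective) yet not nef; this contradicts the hypothesis. For small elementary contractions I would invoke the Mori dream space structure: since $X$ is a smooth Fano variety, hence a Mori dream space by~\cite{bchm}, the cone $\mathrm{Pseff}(X)$ admits a Mori chamber decomposition inside of which $\mathrm{Nef}(X)$ sits as a single chamber. A small elementary contraction would produce a flip $X^+$, canonically isomorphic to $X$ in codimension one, and the adjacent chamber $\mathrm{Nef}(X^+)$ would sit inside $\mathrm{Pseff}(X)$ but meet $\mathrm{Nef}(X)$ only along a wall, producing classes in $\mathrm{Pseff}(X) \setminus \mathrm{Nef}(X)$ and again contradicting the hypothesis. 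Thus every elementary extremal contraction of $X$ is of fiber type.

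Second, since $X$ is Fano, $-K_X$ is ample, so by the Cone Theorem every extremal ray of $\overline{\mathrm{NE}}(X)$ is $K_X$-negative and contractible via an elementary contraction, all of which are now of fiber type. Proposition~\ref{linear-ind} then applies to the entire set of extremal rays of $\overline{\mathrm{NE}}(X)$: these rays are linearly independent in $\mathrm{N}_1(X)$, so there are at most $\rho(X)$ of them. On the other hand, $\overline{\mathrm{NE}}(X)$ spans $\mathrm{N}_1(X)_{\mathbb{R}}$, which has dimension $\rho(X)$, forcing there to be exactly $\rho(X)$ extremal rays. Hence $\overline{\mathrm{NE}}(X)$ is simplicial. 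Finally, the dual of a full-dimensional simplicial cone is simplicial, and since $\mathrm{Nef}(X)$ is dual to $\overline{\mathrm{NE}}(X)$ inside $\mathrm{N}^1(X)_{\mathbb{R}}$, it too is simplicial.

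I expect the main obstacle to lie in cleanly excluding small elementary contractions. While the divisorial case is a direct numerical verification, the small case requires invoking the Mori dream space machinery and identifying $X$ with its flip in codimension one in order to interpret classes of $\mathrm{Nef}(X^+)$ as pseudo-effective classes on $X$. A slicker alternative is to observe that $\mathrm{Pseff}(X) = \mathrm{Nef}(X)$ forces $\overline{\mathrm{Mov}}(X) = \mathrm{Nef}(X)$, so the SQM decomposition of the movable cone is trivial and no flipping ray can exist; once this reduction is made, the rest of the proof proceeds as sketched above.
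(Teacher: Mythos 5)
Your proof is correct and follows essentially the same route as the paper: reduce by duality to showing $\overline{\mathrm{NE}}(X)$ is simplicial, establish that every elementary contraction is of fiber type, then apply Proposition~\ref{linear-ind} together with full-dimensionality of $\overline{\mathrm{NE}}(X)$. The only difference is in the middle step: where the paper simply cites \cite[Lemma~4.4]{druel} to pass from $\mathrm{Pseff}(X)=\mathrm{Nef}(X)$ to the statement that every elementary contraction is of fiber type, you give a self-contained argument, handling divisorial contractions by the direct numerical check $E\cdot C<0$ (so the exceptional divisor $E$ is pseudo-effective but not nef) and excluding small contractions via the Mori dream space structure (equivalently, the observation that $\mathrm{Nef}(X)\subseteq\overline{\mathrm{Mov}}(X)\subseteq\mathrm{Pseff}(X)$ forces the movable cone to have a single chamber). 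Your inline argument is sound and makes the proof more self-contained; the paper's citation is shorter but relies on Druel's result for the same content.
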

 
\begin{proof} Since the dual of the cone $\overline{\mathrm{NE}}(X)$ is the cone $\mathrm{Nef}(X)$, it is enough to prove that $\overline{\mathrm{NE}}(X)$ is simplicial. By \cite[Lemma~4.4]{druel}, any elementary contraction of $X$ is of fiber type. Then $X$ admits $\rho(X)$-many elementary contractions of fiber type. Thus, our assertion follows from Proposition~\ref{linear-ind}. 
\end{proof}

\begin{theorem}\label{many MFS}Conjecture \ref{mukai conjecture}  holds for a smooth Fano variety $X$ with $\rho(X)$-many elementary contractions of fiber type. 
\end{theorem}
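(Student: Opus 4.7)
The plan is to combine Proposition~\ref{linear-ind} with the standard chain-of-rational-curves estimate developed for the generalized Mukai conjecture in \cite{bcdd,w-mukai,aco}.

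First I would unpack the hypothesis: let $\varphi_i \colon X \to X_i$, for $i=1,\ldots,\rho(X)$, be the given elementary contractions of fiber type, with extremal rays $R_i \subset \overline{\mathrm{NE}}(X)$. Proposition~\ref{linear-ind} guarantees that the classes $[R_1], \ldots, [R_{\rho(X)}]$ form a basis of $\mathrm{N}_1(X)_{\mathbb R}$. For each $i$, I would fix a minimal dominating family $\mathcal{M}_i$ of rational curves associated with $\varphi_i$; being minimal in an elementary extremal ray, such a family is automatically unsplit. Setting $l_i := -K_X \cdot \mathcal{M}_i$ and recalling that $-K_X \sim i(X)H$ for some ample Cartier $H$, each $l_i$ is a positive integer multiple of $i(X)$, so $l_i \geq i(X)$.

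The key step is to analyze the $\mathrm{rc}(\mathcal{M}_1,\ldots,\mathcal{M}_{\rho(X)})$-fibration $\pi \colon X \dashrightarrow Z$. For a general point $x \in X$, the standard Mori bound gives $\dim \mathrm{Locus}(\mathcal{M}_i; x) \geq l_i - 1$, and because the classes $[\mathcal{M}_i]$ are numerically independent, these dimensions add up along chains, cf.~\cite[\S4]{aco} and \cite{bcdd}. A general fiber of $\pi$ therefore satisfies
\[
\dim X \;\geq\; \dim X - \dim Z \;\geq\; \sum_{i=1}^{\rho(X)}(l_i - 1) \;\geq\; \rho(X)\bigl(i(X)-1\bigr),
\]
which rearranges to the Mukai inequality $\dim X + \rho(X) \geq i(X)\rho(X)$.

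In the equality case the entire chain of inequalities must be saturated, which forces $\dim Z = 0$, $l_i = i(X)$ for all $i$, and $\mathrm{Locus}(\mathcal{M}_i; x)$ to fill out the entire general fiber $F_i$ of $\varphi_i$, so that $\dim F_i = i(X)-1$. Since $F_i$ has Picard rank one and contains a rational curve of anticanonical degree $\dim F_i + 1$, the Kobayashi--Ochiai theorem identifies $F_i$ with $\mathbb{P}^{i(X)-1}$. The main obstacle is then to promote this fiberwise identification to a global product decomposition $X \simeq \prod_{i=1}^{\rho(X)} \mathbb{P}^{i(X)-1}$; I would accomplish this either by invoking Occhetta's characterization of products of projective spaces via unsplit families of rational curves \cite{occ}, whose hypotheses are precisely met by the system $\{\mathcal{M}_i\}$, or by applying Theorem~\ref{gen.mukai.thm} of the present paper to a suitable nef decomposition of $-K_X$ extracted from the contractions $\varphi_i$.
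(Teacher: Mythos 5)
Your overall strategy — Proposition~\ref{linear-ind}, Kobayashi--Ochiai applied to the general fibers, and Occhetta's product criterion~\cite{occ} for the equality case — matches the paper's. The divergence is in the inequality step, and there you have a genuine gap.

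You assert that a minimal dominating family associated with an elementary fiber-type contraction is ``automatically unsplit.'' That is not true in general. Minimality rules out degenerations of \emph{general} members (a free rational curve that splits would produce a dominating subfamily of strictly smaller anticanonical degree, contradicting minimality), and elementarity of the ray $R_i$ gives \emph{quasi}-unsplitness (every component of a degenerate cycle still has class in $R_i$), but neither excludes degenerations at special points. Since the chain estimate $\dim\operatorname{Locus}(\mathcal M_1,\dots,\mathcal M_{\rho(X)};x)\geq\sum(l_i-1)$ that you borrow from \cite{aco,bcdd} is stated and proved for \emph{unsplit} families (the inductive step attaches curves through possibly non-general points of the previously built locus), the middle inequality in your chain is not justified as written. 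Notably, the paper's own proof is careful on exactly this point: it invokes unsplitness of $\mathcal M_j$ \emph{only in the equality case}, via \cite[Lemma~2.7]{wat}, once it already knows $\ell(R_j)=i(X)=\dim F_j+1$, a condition which does force unsplitness (any splitting $C\rightsquigarrow C_1+C_2$ inside $R_j$ would require $\deg\mathcal M_j\geq 2\ell(R_j)$).

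The paper sidesteps the issue entirely for the inequality: it cites Wi\'sniewski's inequality \cite[Theorem~2.2]{w-contr} to obtain $\dim X\geq\sum_{j=1}^{\rho(X)}\dim F_j$ directly from the geometry of the $\rho(X)$ fiber-type contractions, then applies Kobayashi--Ochiai fiberwise to get $\dim F_j\geq i(F_j)-1\geq i(X)-1$ from the divisibility $i(X)\mid i(F_j)$. No rational-curve chain estimate and hence no unsplitness is needed at that stage. If you want to keep your rc-fibration route, you would need to either prove that the chain estimate holds for quasi-unsplit families spanning extremal rays, or first establish unsplitness of $\mathcal M_i$ by a degree argument; the cleaner fix is to replace the chain estimate with Wi\'sniewski's inequality as the paper does.
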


\begin{proof} Let $X$ be a smooth Fano variety with distinct extremal rays of fiber type $R_1, \ldots, R_{\rho(X)} \subset \overline{\mathrm{NE}}(X)$. We denote by $\varphi_j \colon  X\to X_j$ the contraction of an extremal ray $R_j$ and by $F_j$ a (smooth) general fiber of $\varphi_j$. Let $\mathcal{M}_j$ be a minimal dominating family of rational curves associated with $\varphi_j$.

To start, we prove the first part of Conjecture~\ref{mukai conjecture}.
By \cite[Theorem~2.2]{w-contr}, \cite{kooc}, and the divisibility $i(X)|i(F_j)$ which grants $i(F_j)-1 \geq i(X)-1$, it follows that
\begin{eqnarray}\label{M:1}
\dim X\geq \sum_{j=1}^{\rho(X)} \dim F_j \geq  \sum_{j=1}^{\rho(X)}(i(F_j)-1) \geq \rho(X)(i(X)-1).
\end{eqnarray}
Thus, the first part of Conjecture \ref{mukai conjecture} holds. Assume
$$
\dim X=\rho(X)(i(X)-1).
$$
It follows from (\ref{M:1}) that
$$
\dim X=\sum_{j=1}^{\rho(X)} \dim F_j =\sum_{j=1}^{\rho(X)}(i(F_j)-1)=\sum_{j=1}^{\rho(X)} (i(X)-1).
$$
Since $i(X) \leq i(F_j)$ for all $j$, we have $i(X)=i(F_j)$ for all $j$.
By \cite{kooc}, $\dim F_j \leq i(F_j)-1$ for all $j$.
Thus, we have $\dim F_j = i(F_j)-1$ for all $j$.
Furthermore,
again by \cite{kooc}, $F_j$ is isomorphic to a projective space $\mathbb{P}^{i(X)-1}$ and $\mathcal{M}_j$ contains a line in $\mathbb{P}^{i(X)-1}$. Since $-K_X=i(X)H$ for some ample divisor $H$, the length $\ell(R_j)$ is equal to $i(X)$. According to \cite[Lemma~2.7]{wat}, we see that $\mathcal{M}_j$ is unsplit.
Applying \cite[Theorem~1.1]{occ} and Proposition~\ref{linear-ind}, $X$ is isomorphic to a product of projective spaces. 
\end{proof}

\section{Smooth images of products of projective spaces}
\label{sec:smooth-images}

In this section, we prove the following:

\begin{theorem}\label{thm:smth-imgs}
Let $X\cong \prod_{i=1}^r\mathbb{P}^{n_i}$ be a product of projective spaces, and let $\phi\colon X\rightarrow Y$ be a surjective morphism to a smooth projective variety $Y.$
Then $Y\cong \prod_{j=1}^s\mathbb{P}^{m_j}$ is a product of projective spaces.
\end{theorem}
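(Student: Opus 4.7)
The plan is to apply Corollary~\ref{cor:mukai-type-klt-fano}, the characterization of products of projective spaces via nef complexity, to $Y$: since $Y$ is smooth (hence klt), it suffices to show $Y$ is Fano and that $c_Y=0$. I would proceed by induction on $\dim X$, with the base case $\dim X=0$ being trivial.

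For the inductive step, let $\hat p_i\colon X\to X_{\hat i}\coloneqq\prod_{j\ne i}\mathbb{P}^{n_j}$ be the projection forgetting the $i$-th factor. If for some $i$ the restriction of $\phi$ to one (equivalently, every) $\mathbb{P}^{n_i}$-fiber of $\hat p_i$ is constant, then by the rigidity lemma $\phi$ factors as $\phi=\phi'\circ\hat p_i$ through a surjection $\phi'\colon X_{\hat i}\to Y$, and the inductive hypothesis (noting $\dim X_{\hat i}<\dim X$) concludes. The essential case is thus when $\phi$ is non-constant on every $\mathbb{P}^{n_i}$-fiber of every $\hat p_i$. Under this non-contraction hypothesis I would first argue $Y$ is Fano: nefness of $-K_Y$ follows from the fact that $T_X=\bigoplus_i p_i^*T_{\mathbb{P}^{n_i}}$ is a nef vector bundle (a direct sum of pullbacks of the ample bundles $T_{\mathbb{P}^{n_i}}$) together with the generic surjectivity of $d\phi\colon T_X\twoheadrightarrow \phi^*T_Y$ on the smooth locus of $\phi$, which upon passing to determinants gives nefness of $\phi^*(-K_Y)$ and hence of $-K_Y$; ampleness then follows from the non-contraction hypothesis, since any extremal ray of $\overline{\mathrm{NE}}(Y)$ on which $-K_Y$ vanished would pull back to a nontrivial extremal ray of $\overline{\mathrm{NE}}(X)$ on which $\phi^*(-K_Y)$ vanished, contradicting the strict positivity provided by non-contraction.

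To conclude $c_Y=0$, I would analyze the injective pullback $\phi^*\colon\mathrm{Nef}(Y)\hookrightarrow\mathrm{Nef}(X)=\bigoplus_i\mathbb{R}_{\ge 0}H_i$ and argue that under the non-contraction hypothesis its image is a \emph{coordinate subcone} corresponding to a partition $\{I_1,\dots,I_s\}$ of $\{1,\dots,r\}$, that is, the extremal nef generators of $Y$ pull back (up to positive scalar) to the classes $\sum_{i\in I_j}H_i$. Granted this structural claim, reading off coefficients from $-K_X=\sum_i(n_i+1)H_i$ produces a nef decomposition of $-K_Y$ whose coefficients sum to $\dim Y+\rho(Y)$, so $c_Y=0$ and Corollary~\ref{cor:mukai-type-klt-fano} yields $Y\cong\prod_j\mathbb{P}^{m_j}$. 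The main obstacle I foresee is precisely this coordinate-subcone claim: it essentially asserts that in the main case $\phi$ factors as a composition of projections onto subproducts of the $\mathbb{P}^{n_i}$ followed by an isomorphism, which will likely require showing $\phi$ is equidimensional with fibers that are themselves products of projective subspaces along selected coordinates.
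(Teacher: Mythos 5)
Your overall shape — reduce to $\phi$ finite, then show $Y$ is Fano and a product — matches the paper's first step, but after that you diverge sharply: you aim to prove $c_Y=0$ and invoke Corollary~\ref{cor:mukai-type-klt-fano}, whereas the paper never applies the Mukai-type theorem to $Y$; instead it shows directly that a finite $\phi$ of minimal degree is \'etale and then appeals to simple connectedness of the Fano manifold $Y$.

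There are two genuine gaps in your route. First, the tangent-bundle argument for nefness of $-K_Y$ does not work when $\phi$ is ramified. The differential $d\phi\colon T_X\to \phi^*T_Y$ is injective with torsion cokernel supported on the ramification divisor, and Riemann--Hurwitz gives $\phi^*(-K_Y)=-K_X+R_\phi$, i.e.\ ample plus effective (so big) but \emph{a priori} not nef: passing to determinants shows $-K_X$ is a subsheaf of $\phi^*(-K_Y)$, which is the wrong direction to inherit nefness, and ``generically a quotient of a nef bundle'' does not imply nef. The paper avoids this by citing the nontrivial theorem (~\cite[Corollary 5.2]{FG12}) that the image of a Fano type variety under a surjective morphism is again of Fano type, together with the observation that $\mathrm{Nef}(Y)=\mathrm{Pseff}(Y)$ (via ~\cite[Lemma 4.2]{Dru16}) to upgrade Fano type to Fano.

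Second, your ``coordinate subcone'' claim --- that under the non-contraction hypothesis the extremal nef generators of $Y$ pull back to $\sum_{i\in I_j}H_i$ for a partition $\{I_j\}$, and that reading off coefficients from $-K_X=\sum_i(n_i+1)H_i$ gives $\tau_Y=\dim Y+\rho(Y)$ --- is essentially equivalent to the statement you want to prove, and you offer no argument for it. Moreover, even granting the partition statement, the ramification divisor gets in the way of the coefficient count: $\phi^*(-K_Y)=-K_X+R_\phi$, not $-K_X$, so the sum of coefficients is not simply read off from $-K_X$. The paper's way around this is very different: for each elementary contraction $f\colon Y\to Z$ it builds an intermediate variety $\widetilde{X}$ over $Z$, shows $\widetilde{X}\cong W\times\widehat{W}$ (Lemma~\ref{lem:shared-base}, which uses the simpliciality Theorem~\ref{simpliciality}), identifies $\widetilde{X}$ with $X$ by minimality of $\deg\phi$, and deduces that the branch divisor $B$ is vertical for every elementary contraction; since $\mathrm{Nef}(Y)=\mathrm{Pseff}(Y)$ and every elementary contraction is a fibration, this forces $B=0$ and hence $\phi$ \'etale. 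You would need to either reproduce this machinery or supply an independent proof of your structural claim; as written, the second half of your proposal is a plan without a mechanism.
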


First, we prove a few lemmas.

\begin{lemma}\label{lem:reduce-to-fin-case}
Let $Y$ be a projective variety that receives a surjection from a product of projective spaces. Then $Y$ receives a finite surjection from a product of projective spaces.
\end{lemma}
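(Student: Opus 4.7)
My plan is to run the Stein factorization of $\phi$, writing $X\xrightarrow{\phi_1}Z\xrightarrow{\phi_2}Y$ with $\phi_1$ having connected fibers (equivalently $\phi_{1\ast}\mathcal{O}_X=\mathcal{O}_Z$) and $\phi_2$ finite. It will then suffice to prove that $Z$ is itself a product of projective spaces, because then $\phi_2\colon Z\to Y$ furnishes the required finite surjection.

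To analyze $\phi_1$, I will exploit the rigidity of morphisms from projective spaces. Fix an ample Cartier divisor $A$ on $Z$ and decompose
\[
\phi_1^\ast A \equiv \sum_{i=1}^r d_iH_i
\]
in $\mathrm{Pic}(X)$, where $H_i$ is the pullback of the hyperplane class from the $i$-th factor. Set $I=\{i\mid d_i>0\}$. For each $i\notin I$, the restriction of $\phi_1^\ast A$ to any $\mathbb{P}^{n_i}$-fiber of the projection $X\to\prod_{j\neq i}\mathbb{P}^{n_j}$ is trivial, and since $A$ is ample on $Z$, $\phi_1$ must contract every such fiber to a point. The fiber of $\pi_I\colon X\to X_I:=\prod_{i\in I}\mathbb{P}^{n_i}$ over any point is $\prod_{i\notin I}\mathbb{P}^{n_i}$, which is path-connected by chains of such $\mathbb{P}^{n_i}$-lines, so $\phi_1$ is in fact constant on every fiber of $\pi_I$. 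Since $\pi_{I\ast}\mathcal{O}_X=\mathcal{O}_{X_I}$, the rigidity lemma then produces a factorization $\phi_1=\psi\circ\pi_I$ with $\psi\colon X_I\to Z$.

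It remains to verify that $\psi$ is an isomorphism. Its pullback $\psi^\ast A=\sum_{i\in I}d_iH_i$ is ample on $X_I$, so $\psi$ is finite. Combining $\pi_{I\ast}\mathcal{O}_X=\mathcal{O}_{X_I}$ with the Stein identity $\phi_{1\ast}\mathcal{O}_X=\mathcal{O}_Z$ gives
\[
\psi_\ast\mathcal{O}_{X_I}=\psi_\ast\pi_{I\ast}\mathcal{O}_X=\phi_{1\ast}\mathcal{O}_X=\mathcal{O}_Z,
\]
and a finite morphism $\psi$ satisfying $\psi_\ast\mathcal{O}_{X_I}=\mathcal{O}_Z$ is necessarily an isomorphism. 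Thus $Z\cong\prod_{i\in I}\mathbb{P}^{n_i}$, as required.

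The step I expect to require the most care is the rigidity argument: promoting the fibrewise contractions above to a global factorization through the product projection $\pi_I$. The crucial input is $\pi_{I\ast}\mathcal{O}_X=\mathcal{O}_{X_I}$, which follows from the connectedness and projectivity of the fibers of $\pi_I$. Given this, both the factorization of $\phi_1$ through $\pi_I$ and the final identification $\psi_\ast\mathcal{O}_{X_I}=\mathcal{O}_Z$ come for free, and the remaining ingredients (ampleness, Stein factorization, and the characterization of finite morphisms $\psi$ with $\psi_\ast\mathcal{O}=\mathcal{O}$ as isomorphisms) are standard.
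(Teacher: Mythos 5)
Your proposal is correct and follows the same structural skeleton as the paper's proof: pass to the Stein factorization and identify the connected-fiber part as a product projection. The paper, however, simply \emph{asserts} that any contraction of a product of projective spaces is one of the coordinate projections, whereas you actually supply a careful proof of this fact — decomposing the pullback of an ample divisor in $\mathrm{Pic}(X)\cong\mathbb{Z}^r$, reading off the index set $I$ where coefficients are positive, using numerical triviality on the complementary $\mathbb{P}^{n_i}$-lines plus chain-connectedness and rigidity to factor through $\pi_I$, and then invoking $\pi_{I\ast}\mathcal{O}_X=\mathcal{O}_{X_I}$ together with finiteness of $\psi$ to conclude $\psi$ is an isomorphism. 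All the steps check out: the pullback $\phi_1^\ast A$ is nef hence its coordinates $d_i\ge 0$; the restriction to a $\mathbb{P}^{n_i}$-fiber is $d_iH_i|_{\mathbb{P}^{n_i}}$ which vanishes precisely when $i\notin I$; and the standard characterization of finite morphisms $\psi$ with $\psi_\ast\mathcal{O}=\mathcal{O}$ finishes the argument. In short, you are proving the lemma the paper cites implicitly, which makes your write-up more self-contained than the original.
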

\begin{proof}
    Let $X\cong \prod_{i=1}^r\mathbb{P}^{n_i}$ be a product of projective spaces, and let $f\colon X\rightarrow Y$ be a surjective morphism. Let $X\xrightarrow{g}W\xrightarrow{h}Y$ be the Stein factorization of $f.$ Since $X$ is a product of projective spaces, the contraction $g$ must be a projection $\prod_{i=1}^r\mathbb{P}^{n_i}\rightarrow \prod_{i\in I}\mathbb{P}^{n_i}$ for some $I\subset\{1,\hdots, r\}.$ Thus, the morphism $h$ is a finite surjection onto $Y$ from a product of projective spaces.
\end{proof}

\begin{lemma}\label{lem:shared-base}
Suppose given a commutative diagram 
\[
\xymatrix{ 
Z\ar[d]_-{\psi} & Z\times W\ar[l]_-{\pi_Z}\ar[d]_-{\phi} \ar[r]^-{\pi_W}&W \ar[d]^{{\rm Id}_W}\\ 
V &
Y\ar[l]_-{g} \ar[r]^-{f}&W
}
\]
satisfying the following conditions:
\begin{enumerate}
    \item $Y,$ $Z$ and $W$ are smooth projective varieties;
    \item $\psi$ and $\phi$ are finite and surjective; and
    \item $g$ and $f$ are contractions.
\end{enumerate}
Then $(g,f)\colon Y\xrightarrow{\cong}V\times W$ is an isomorphism. In particular, $V$ is smooth.
\end{lemma}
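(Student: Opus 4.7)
The plan is to show $Y \simeq V \times W$ via $(g,f)$ using Galois theory applied to a Galois closure of $\psi$. First, chasing the commutative diagram, one verifies the identity $(g,f)\circ \phi = \psi \times \mathrm{Id}_W$. Since $\phi$ is finite and surjective and the composition $\psi \times \mathrm{Id}_W$ is finite, every fiber of $(g,f)$ pulls back under the surjective finite morphism $\phi$ to a finite set, hence is finite; as $Y$ is projective, $(g,f)$ is proper with finite fibers, hence a finite surjective morphism onto $V\times W$.

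Next, I would establish that $V$ is normal. Since $g$ is a contraction, $g_* \mathcal{O}_Y = \mathcal{O}_V$; for any affine open $U \subset V$, $\mathcal{O}_V(U) = \mathcal{O}_Y(g^{-1}U)$, which is integrally closed in $K(Y)$ (as $Y$ is normal) and a fortiori in $K(V) \subset K(Y)$. Combined with $W$ smooth, the product $V \times W$ is normal.

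The main step is Galois-theoretic. Let $Z' \to V$ be the Galois closure of $\psi$, and set $G := \mathrm{Gal}(K(Z')/K(V))$. Since $K(W)/\mathbb{C}$ is purely transcendental, the induced cover $Z' \times W \to V \times W$ is again Galois with group $G$ acting trivially on the $W$-factor. Composing $(g,f)$ with $\phi$ and the map $Z' \times W \to Z \times W$ realizes an intermediate field $K(V \times W) \subset K(Y) \subset K(Z' \times W)$, so Galois theory supplies a subgroup $H \subset G$ with $K(Y) = K(Z' \times W)^H$. Since $Y$ is normal and finite over $V \times W$ with function field $K(Y)$, it coincides with the normalization of $V\times W$ in $K(Y)$, whence $Y \simeq (Z' \times W)/H$. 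Because $H$ fixes the $W$-factor, this simplifies to $Y \simeq V' \times W$ with $V' := Z'/H$.

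Under this identification, $g$ factors as $V' \times W \to V' \xrightarrow{p} V$, and the Künneth-type computation gives $g_* \mathcal{O}_Y = p_* \mathcal{O}_{V'}$. The contraction hypothesis $g_* \mathcal{O}_Y = \mathcal{O}_V$ then forces $p: V' \to V$ to be a finite morphism of generic degree one between normal varieties, hence an isomorphism by Zariski's main theorem. Therefore $V' = V$, $Y \simeq V \times W$, and $(g,f)$ is the identity; in particular $V$ is smooth as a factor of the smooth variety $V \times W = Y$. The main obstacle is to justify cleanly the Galois-theoretic identification $Y \simeq (Z'\times W)/H$, namely that the normalization of $V\times W$ in the intermediate field $K(Y) \subset K(Z' \times W)$ respects the product structure, which reduces to the fact that $H$ acts trivially on the $W$-coordinate.
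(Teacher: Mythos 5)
Your proof is correct, but it takes a genuinely different route from the one in the paper. The paper's argument is elementary degree-counting: it reduces the statement to showing that $g$ restricts to an isomorphism $f^{-1}(p)\to V$ for a general $p\in W$, and then derives $\deg(g|_{f^{-1}(p)})=1$ from the identity $\deg(\psi)=\deg(g|_{f^{-1}(p)})\cdot\deg(\phi|_{\pi_W^{-1}(p)})$ after proving, by a careful choice of open sets, that $\deg(\psi)=\deg(\phi)$ and that $\deg(\phi|_{\pi_W^{-1}(p)})\geq\deg(\phi)$ for general $p$. Your argument instead works birationally via the Galois closure $Z'\to V$: identifying $Y$ as the normalization of $V\times W$ in an intermediate field and hence as $(Z'\times W)/H$, you split this quotient using the triviality of the $H$-action on the $W$-coordinate, and finally use $g_*\mathcal O_Y=\mathcal O_V$ to force the residual finite map $V'\to V$ to be an isomorphism. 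Both approaches are valid; yours is more conceptual and structural, while the paper's is more hands-on and avoids invoking Galois closures, quotient varieties, and the decomposition $(Z'\times W)/H\cong(Z'/H)\times W$. The step you flag as ``the main obstacle'' is in fact fine, since $(A\otimes_{\mathbb C}B)^H=A^H\otimes_{\mathbb C}B$ when $H$ acts trivially on $B$.

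One small inaccuracy in your justification: you assert that ``$K(W)/\mathbb C$ is purely transcendental,'' which amounts to $W$ being rational and is not part of the hypotheses. Fortunately, this claim is not needed for the conclusion you draw from it. That $K(Z'\times W)/K(V\times W)$ is Galois with group $G$ follows instead from observing that $Z'\times W = Z'\times_V(V\times W)$, so the extension has degree $\deg(Z'/V)=|G|$, that $G$ acts faithfully through the first factor, and that $(Z'\times W)/G=(Z'/G)\times W=V\times W$, so the invariant subfield is $K(V\times W)$; Artin's theorem then yields the Galois property. With that correction the argument is complete.
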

\begin{proof}
We begin by observing that it suffices to show that $g$ induces an isomorphism $f^{-1}(p)\xrightarrow{\cong}V$ for a general point $p\in W.$ Indeed, it will follow that $(g,f)\colon Y\rightarrow V\times W$ is a finite morphism of degree one between smooth projective varieties, hence an isomorphism. We note that $$\pi_Z|_{\pi_W^{-1}(p)}\colon \pi_W^{-1}(p)\rightarrow Z$$ is an isomorphism for all $p\in W$ and that $$\phi|_{\pi_W^{-1}(p)}\colon \pi_W^{-1}(p)\rightarrow f^{-1}(p)$$ is a finite and surjective morphism between smooth projective varieties for general $p\in W.$ Since we have \begin{align*}
    \deg(\psi)&=\deg((\psi\circ \pi_Z)|_{\pi_W^{-1}(p)})\\
    &=\deg((g\circ\phi)|_{\pi_W^{-1}(p)})\\
    &=\deg(g|_{f^{-1}(p)})\deg(\phi|_{\pi_W^{-1}(p)})
\end{align*}
for general $p\in W,$ it will suffice to show that $\deg(\phi|_{\pi_W^{-1}(p)})\geq \deg(\psi)$ for general $p\in W$ to obtain $\deg(g|_{f^{-1}(p)})=1$ and hence conclude.\par
Next, we show that $\deg(\psi)=\deg(\phi).$ Let $U_1\subset V$ be a nonempty open subset over which $\psi$
 is \'etale and $g$ is smooth, and let $U_2\subset Y$ be a nonempty open subset over which $\phi$ is \'etale. We note that $U_1\cap g(U_2)=g(g^{-1}(U_1)\cap U_2)$ is nonempty, and we fix $p \in U_1\cap g(U_2).$ Since $p\in U_1,$ $\psi^{-1}(p)$ consists of ${\rm deg}(\psi)$ reduced points.
 We note that $$\phi|_{\pi_Z^{-1}(q)}\colon \pi_Z^{-1}(q)\rightarrow g^{-1}(p)$$
is an isomorphism for each $q\in \psi^{-1}(p)$.
Indeed, $g^{-1}(p)$ is integral and of the same dimension as $\pi_Z^{-1}(q)$ and $\phi|_{\pi_Z^{-1}(q)}\colon \pi_Z^{-1}(q)\rightarrow Y$ is an embedding with left inverse given by $f.$ We note that $g^{-1}(p)\cap U_2$ is nonempty by our choice of $p$, and we fix $y\in g^{-1}(p)\cap U_2$.
On the one hand, the arguments above imply that the underlying reduced subscheme of $\phi^{-1}(y)$
consists of ${\rm deg}(\psi)$ points. On the other hand, $y\in U_2$ implies that $\phi^{-1}(y)$
consists of ${\rm deg}(\phi)$ reduced points. Thus, ${\rm deg}(\psi)={\rm deg}(\phi).$\par
Finally, we show that ${\rm deg}(\phi|_{\pi^{-1}_W(p)})\geq {\rm deg}(\phi)$ for general $p\in W.$ Let $U_3\subset W$ be a nonempty open subset over which $f$ is smooth. We note that $U_3\cap f(U_2)=f(f^{-1}(U_3)\cap U_2)$ is nonempty and that it is open in $U_3,$ hence in $W,$ since it is the image of $f^{-1}(U_3)\cap U_2$ under the smooth morphism $f|_{f^{-1}(U_3)}.$ Fix an arbitrary $p\in U_3\cap f(U_2).$ Then $$\phi|_{\pi_W^{-1}(p)}\colon \pi_W^{-1}(p)\rightarrow f^{-1}(p)$$ is a finite and surjective morphism between smooth projective varieties and $U_2\cap f^{-1}(p)\neq \emptyset.$ For $y\in U_2\cap f^{-1}(p),$ we see that $\phi^{-1}(y)$ consists of ${\rm deg}(\phi)$ reduced points and hence that the underlying reduced subscheme of $(\phi|_{\pi^{-1}_W(p)})^{-1}(y)$ consists of ${\rm deg}(\phi)$ points. Thus, ${\rm deg}(\phi|_{\pi^{-1}_W(p)})\geq {\rm deg}(\phi)$.
\end{proof}

\begin{lemma}\label{lem:img-fano-nef-is-pseff}
Let $\phi\colon X=\prod_{i=1}^r\mathbb{P}^{n_i}\rightarrow Y$ be a surjective morphism from a product of projective spaces onto a smooth projective variety $Y.$ Then $Y$ is a Fano manifold with ${\rm Pseff}(Y)={\rm Nef}(Y).$ Moreover, each nontrivial contraction of $Y$ is an equidimensional fibration. 
\end{lemma}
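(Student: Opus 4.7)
The plan is to reduce, via Lemma \ref{lem:reduce-to-fin-case}, to the case where $\phi\colon X\to Y$ is a finite surjection, and then establish the three conclusions in sequence. For the equality of nef and pseudoeffective cones on $Y$: on $X=\prod_i\mathbb{P}^{n_i}$ both cones coincide, as they are generated by the pullbacks of the hyperplane classes of the factors; and $\mathrm{Nef}(Y)\subseteq\mathrm{Pseff}(Y)$ is automatic on any smooth projective variety. For the reverse inclusion, given $D$ pseudoeffective on $Y$, the pullback $\phi^\ast D$ is pseudoeffective, and hence nef, on $X$, and any irreducible curve $C\subset Y$ lifts to an irreducible curve $\tilde C\subset X$ dominating $C$ with some positive degree $e$, so that $D\cdot C=e^{-1}\,\phi^\ast D\cdot\tilde C\ge 0$.

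For the Fano property: the ramification formula for the finite morphism $\phi$ between smooth varieties gives $K_X=\phi^\ast K_Y+R$ with $R\ge 0$, so $\phi^\ast(-K_Y)=-K_X+R$ is ample plus effective, hence big. Since bigness descends along finite surjective morphisms, $-K_Y$ is big; combining this with the equality $\mathrm{Pseff}(Y)=\mathrm{Nef}(Y)$, whose interiors are the big cone and the ample cone respectively, one concludes that $-K_Y$ is ample.

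For the statement on contractions, the first half (that a nontrivial contraction $g\colon Y\to Z$ is necessarily of fiber type) follows from the fact, already invoked in the proof of Theorem \ref{simpliciality}, that $\mathrm{Pseff}(Y)=\mathrm{Nef}(Y)$ forces every elementary contraction of $Y$ to be of fiber type: indeed, any extremal ray $R$ in the dual face of $g$ provides a factorization of $g$ through $\mathrm{cont}_R$ by the rigidity lemma, and $\mathrm{cont}_R$ strictly drops dimension. The equidimensionality is the main obstacle, and my plan is to circumvent it by passing through $X$. Consider the composition $g\circ\phi\colon X\to Z$ and its Stein factorization $X\xrightarrow{q}W\xrightarrow{h}Z$. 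Since any contraction of the Mori dream space $\prod_i\mathbb{P}^{n_i}$ is a projection onto a subproduct, $q$ is equidimensional with smooth target $W$. Finiteness of $h$ then implies $\dim(g\circ\phi)^{-1}(z)=\dim X-\dim W$ independently of $z$, and finiteness of $\phi$ gives $\dim g^{-1}(z)=\dim(g\circ\phi)^{-1}(z)=\dim Y-\dim Z$ for every $z\in Z$, as desired.
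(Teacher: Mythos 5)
Your proposal is correct, but it reaches the cone equality and the Fano property by a genuinely different route than the paper. The paper establishes the equidimensional fibration property \emph{first} (via the Stein factorization through $X$, essentially as you do), then applies the deep result that smooth images of Fano-type varieties are Fano type \cite[Corollary~5.2]{FG12}; from Fano-type it gets semiampleness of nef classes, hence $\mathrm{Pseff}(Y)=\mathrm{Nef}(Y)$ by the argument of \cite[Lemma~4.4]{Dru16}, and then writes $-K_Y=-(K_Y+\Delta)+\Delta$ with $\Delta$ nef to conclude ampleness. You instead prove $\mathrm{Pseff}(Y)=\mathrm{Nef}(Y)$ directly with the projection formula (pull back a pseudoeffective class to a nef class on $X$ and intersect with a lifted curve), and get ampleness of $-K_Y$ from bigness (via the ramification formula and descent of bigness along finite surjections) plus the identification of interiors $\mathrm{Big}(Y)=\mathrm{Amp}(Y)$. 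Your argument is more elementary and self-contained — it does not invoke \cite[Corollary~5.2]{FG12} or \cite[Lemma~4.4]{Dru16} — at the modest cost of running the fiber-type step through the rigidity lemma and \cite[Lemma~4.4]{druel}, where the paper gets it for free from the dimension count $\dim Z=\dim W<\dim X=\dim Y$ in the Stein factorization. Both approaches are valid; the equidimensionality argument at the end is essentially identical to the paper's (finiteness of $h$ forces every component of $(g\circ\phi)^{-1}(z)$ to have dimension $\dim X-\dim W$, and finiteness of $\phi$ transfers this to $g^{-1}(z)$), though the paper is slightly more careful to phrase this component-by-component.
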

\begin{proof}
By Lemma~\ref{lem:reduce-to-fin-case}, we may assume that the morphism $\phi$ is finite.\par
We will begin by showing that each nontrivial contraction of $Y$ is an equidimensional fibration. Let $f\colon Y \rightarrow Z$ be a nontrivial contraction. Let $$X\xrightarrow{g}W\xrightarrow{\psi}Z$$ be the Stein factorization of $f\circ \phi$.
Thus, $g$ is a nontrivial contraction of $X$ and $\psi$ is finite and surjective.
Since $X$ is a product of projective spaces, it follows that $g$ is an equidimensional fibration. In particular, we have
$$
\dim Z = \dim W < \dim X = \dim 
Y,$$
hence that $f$ is a fibration. Write $m=\dim X -\dim W=\dim Y- \dim Z$ for the common relative dimension of $f$ and $g.$ To see that $f$ is equidimensional, let $p\in Z$ be a closed point. Since $\psi$ is a finite morphism, it follows that (the underlying reduced subscheme of) $\psi^{-1}(p)$ is a finite set of closed points in $W.$ Thus, each irreducible component of $g^{-1}\psi^{-1}(p)$ has dimension equal to $m.$ Since $g^{-1}\psi^{-1}(p)=\phi^{-1}f^{-1}(p)$ and $\phi$ is finite and surjective, it follows that each irreducible component of $f^{-1}(p)$ has dimension $m.$\par
Next, we show that $Y$ is Fano and that ${\rm Pseff}(Y)={\rm Nef}(Y).$ We begin by noting that $Y$ is of Fano type by ~\cite[Corollary 5.2]{FG12}. In particular, every nef divisor on $Y$ is semiample. Thus, the equality ${\rm Pseff}(Y)={\rm Nef}(Y)$ follows from the fact that every nontrivial contraction of $Y$ is a fibration as in the proof of ~\cite[Lemma 4.4]{Dru16}. Since $Y$ is of Fano type, there exists an effective $\mathbb Q$-divisor $\Delta$ on $Y$ such that $-(K_Y+\Delta)$ is ample. Since ${\rm Pseff}(Y)={\rm Nef}(Y),$ it follows that $\Delta$ is nef and hence that $-K_Y=-(K_Y+\Delta)+\Delta$ is ample. Thus, $Y$ is Fano.
\end{proof}

\begin{lemma}\label{lem:extr-contr-vert-divs}
Let $Y$ and $Z$ be locally factorial projective varieties of Fano type, and let $f\colon Y \rightarrow Z$ be an elementary contraction of fiber type. Then the prime divisors on $Y$ that are vertical over $Z$
are precisely those of the form $f^\ast D$ for a prime divisor $D$ on $Z.$
\end{lemma}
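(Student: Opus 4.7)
The plan is to prove the forward direction—that every prime vertical divisor $E$ on $Y$ has the form $E = f^{\ast}D$ for a prime divisor $D$ on $Z$—and to deduce the converse as a formal consequence. Granted the forward direction, if $D$ is a prime divisor on $Z$ then each prime component $E_i$ of the Cartier divisor $f^{\ast}D$ is vertical, hence equals $f^{\ast}D'_i$ for some prime $D'_i$ on $Z$; since $f(E_i) \subseteq D \cap D'_i$ and the forward direction yields $f(E_i) = D'_i$, we are forced to conclude $D = D'_i$ and $f^{\ast}D = E_i$, so that $f^{\ast}D$ is itself a prime divisor.

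Fix a prime vertical divisor $E$. The first step is to establish that $E \equiv_{f} 0$. Since $f$ is elementary, $\rho(Y/Z) = 1$, and the relative extremal ray $R$ is generated numerically by the class of any curve contained in a general fiber of $f$. For $z$ in the nonempty open subset $Z \setminus f(E)$, the fiber $f^{-1}(z)$ is disjoint from $E$, so any curve $\ell \subset f^{-1}(z)$ satisfies $E \cdot \ell = 0$; since $\ell$ generates $R$, we conclude $E \equiv_{f} 0$.

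The second and main step invokes the descent Lemma~\ref{lemma.pullback}: because $Y$ is of Fano type, Lemma~\ref{lemma:fano_type} ensures it admits a generalized klt log Calabi--Yau structure with $-K_Y$ big, so the lemma applies to the Cartier divisor $E$ (Cartier by local factoriality) and produces a Cartier divisor $B$ on $Z$ with $E \sim f^{\ast}B$. The projection formula combined with $f_{\ast}\mathcal{O}_Y = \mathcal{O}_Z$ then yields $\mathcal{O}_Y(E) \cong f^{\ast}\mathcal{O}_Z(B)$ and the induced pullback isomorphism $H^0(Z,\mathcal{O}_Z(B)) \xrightarrow{\sim} H^0(Y,\mathcal{O}_Y(E))$; the section of $\mathcal{O}_Z(B)$ corresponding to the one cutting out $E$ has, as its divisor of zeros, an effective Weil divisor $D'$ on $Z$ satisfying $f^{\ast}D' = E$ as Weil divisors. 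Decomposing $D' = \sum_j n_j D'_j$ into primes, the identity $E = \sum_j n_j f^{\ast}D'_j$ together with the primality of $E$ forces $D'$ to reduce to a single prime $D$ appearing with multiplicity one, whence $E = f^{\ast}D$. The main technical difficulty lies precisely in this descent step; once the linear equivalence $E \sim f^{\ast}B$ has been produced via Lemma~\ref{lemma.pullback}, the remaining passage to an equality $E = f^{\ast}D$ of Weil divisors with $D$ prime is formal, relying only on the projection formula and the local factoriality of $Y$ and $Z$.
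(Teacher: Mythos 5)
Your proof is correct, and it takes a genuinely different route from the paper's. The paper disposes of the lemma in a few lines by directly invoking external results: \cite[Theorem 3.7.(4)]{KM98} to get that $f^\ast D$ is reduced, and \cite[Lemma 2.10]{Lai11} to get that $f^\ast D$ is irreducible and that $f(E)$ is a divisor for any vertical prime $E$; the bijection then falls out immediately. You instead build the forward direction from the paper's own Lemma~\ref{lemma.pullback}: you observe that a vertical prime $E$ is Cartier (local factoriality of $Y$) and $f$-numerically trivial (because $\rho(Y/Z)=1$ and a curve in a fiber disjoint from $E$ pairs to zero with $E$), descend to $E \sim f^\ast B$ for a Cartier $B$ on $Z$, then use $f_\ast\mathcal O_Y = \mathcal O_Z$ and the projection formula to realize $E = f^\ast D'$ on the nose for some effective $D' \in |B|$, and finally read off from primality of $E$ that $D'$ is a single prime (here local factoriality of $Z$ lets you pull back each prime component of $D'$). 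The converse you derive formally from the forward direction. What the paper's route buys is brevity via black-boxed citations; what your route buys is self-containment within the paper's own toolkit, since Lemma~\ref{lemma.pullback} (which ultimately rests on the relative basepoint-free theorem) does the heavy lifting in place of the Lai and Koll\'ar--Mori citations. One small remark: you invoke ``Lemma~\ref{lemma:fano_type}'' to pass from $Y$ of Fano type to a gklt log CY structure with $-K_Y$ big; strictly speaking that lemma goes the other way, but the direction you need is immediate from the definition of Fano type (take $\Theta$ with $(Y,\Theta)$ klt and $-(K_Y+\Theta)$ ample, add a general ample $A \sim_{\mathbb Q} -(K_Y+\Theta)$), so the gap is cosmetic.
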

\begin{proof}
We use the local factoriality of $Z$ to make sense of $f^\ast D$ for an arbitrary prime divisor on $Z.$ Since $Y$ is locally factorial and $f$ is an elementary contraction from a Fano type variety, it follows from ~\cite[Theorem 3.7.(4)]{KM98} that $f^\ast D$ is reduced
and from ~\cite[Lemma 2.10]{Lai11} that $f^\ast D$ is irreducible for each prime divisor $D$ on $Z.$ Moreover, it follows from ~\cite[Lemma 2.10]{Lai11} that $f(E)$ is a divisor on $Z$ for each prime divisor $E$ on $Y$ that is vertical over $Z$.
The desired bijection follows.
\end{proof}

\begin{lemma}\label{lem:isotriv-over-smth-locus}
Consider a commutative diagram 
\[
\xymatrix{ 
X\ar[d]_-{g} \ar[r]^-{\phi}&Y \ar[d]^-{f}\\ 
W \ar[r]^-{\psi}&Z
}
\]
in which the following hold:
\begin{enumerate}
    \item $X$ is a product of projective spaces;
    \item $\phi$ is a finite surjective morphism onto a smooth projective variety $Y$;
    \item $f$ is an elementary contraction; and
    \item $\psi\circ g$ is the Stein factorization of $f\circ \phi.$
\end{enumerate}
Then the fibers of $f$ over $Z_{\rm reg}$ are isomorphic to projective space.
\end{lemma}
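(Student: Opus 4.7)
The plan is to exhibit the fibers of $f$ as finite images of fibers of $g$ and then leverage the induction on dimension that drives the proof of Theorem~\ref{thm:smth-imgs}. First I would pin down the geometry of $g$: since $X=\prod_{i=1}^r\mathbb{P}^{n_i}$ and any contraction of a product of projective spaces is, up to reordering of factors, a coordinate projection, there is a subset $I\subseteq\{1,\dots,r\}$ with $W\cong\prod_{i\in I}\mathbb{P}^{n_i}$, and every fiber of $g$ is isomorphic to the product $F_X:=\prod_{i\notin I}\mathbb{P}^{n_i}$. Moreover, Lemma~\ref{lem:img-fano-nef-is-pseff} tells us that $f$ is an equidimensional fibration of some relative dimension $m$ with $\rho(Y/Z)=1$.

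Next, for $p\in Z_{\rm reg}$, set $F_p:=f^{-1}(p)$. Since $\psi\circ g$ is the Stein factorization of $f\circ\phi$, the fibers of $f$ are connected, and $\phi^{-1}(F_p)=g^{-1}(\psi^{-1}(p))$ is a (possibly non-reduced) union of fibers of $g$. For every closed point $q\in\psi^{-1}(p)$, the restriction $\phi|_{g^{-1}(q)}\colon F_X\to F_p$ is finite because $\phi$ is finite; a dimension count, combined with the connectedness of $F_p$, shows that $\phi|_{g^{-1}(q)}$ is surjective. Hence $F_p$ is a finite image of the product of projective spaces $F_X$.

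Once one knows that $F_p$ is smooth, the induction hypothesis for Theorem~\ref{thm:smth-imgs} (applied in dimension $m<\dim Y$) forces $F_p$ to be a product of projective spaces, and the constraint $\rho(F_p)\le\rho(Y/Z)=1$ then collapses the product to a single factor, yielding $F_p\cong\mathbb{P}^m$. To promote the conclusion from the generic fiber to every fiber over $Z_{\rm reg}$, I would invoke a rigidity result in the spirit of Fujita and Hwang--Mok: a proper equidimensional family with generic fiber $\mathbb{P}^m$ over a smooth connected base, all of whose fibers are smooth $m$-folds, is necessarily a Zariski locally trivial $\mathbb{P}^m$-bundle.

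The main obstacle is therefore to prove that $f$ is smooth over all of $Z_{\rm reg}$, not merely over a dense open subset thereof. This is the step where it is crucial that $Y$ is finitely covered by a product of projective spaces, because in general a Mori fiber contraction of a Fano manifold can have singular fibers over smooth points of the base (e.g.\ conic bundles over a smooth discriminant). I would address this by first passing to a Galois closure of $\phi$, so that each $F_p$ becomes the quotient of $F_X$ by a finite group, and then applying a Chevalley--Shephard--Todd-type analysis: the smoothness of $Y$ along $F_p$, combined with the description of the stabilizers coming from the ramification of $\phi$ over $F_p\subset Y$ and of $\psi$ over $p\in Z_{\rm reg}$, forces the quotient action to be generated by pseudo-reflections, which gives smoothness of $F_p$. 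This ramification analysis over $Z_{\rm reg}$ is the technical heart of the argument.
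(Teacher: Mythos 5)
Your route diverges fundamentally from the paper's, and the divergence is exactly where the gap lies. The paper proves that $-K_{Y/Z}$ is nef: it establishes the inequality $R_\phi \geq g^\ast R_\psi$ between ramification divisors (using Lemma~\ref{lem:extr-contr-vert-divs} to identify $\phi(g^\ast D)$ with $f^\ast \psi(D)$), writes $\phi^\ast(-K_{Y/Z}) = -K_{X/W} + (R_\phi - g^\ast R_\psi)$, and uses $\mathrm{Nef}(X)=\mathrm{Pseff}(X)$ to conclude nefness; then \cite[Theorem A.13]{CPZ19} delivers isotriviality over $Z_{\mathrm{reg}}$, generic smoothness upgrades this to smoothness of $f$ over $Z_{\mathrm{reg}}$, and simple connectedness of $Z_{\mathrm{reg}}$ together with \cite[Theorem 4.18]{Voi03b} and \cite[Theorem 1]{OW02} identifies the fibers. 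You, by contrast, try to show directly that each fiber $F_p$ is smooth and then run an induction on dimension.

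The step you flag as ``the technical heart'' --- smoothness of $F_p$ for every $p\in Z_{\mathrm{reg}}$ --- is where your argument fails as written. Passing to a Galois closure of $\phi$ destroys the very structure you need: the Galois closure of a finite cover of a smooth variety is in general neither smooth nor a product of projective spaces, so $F_p$ is not realized as a quotient of a product of projective spaces by a finite group. Even granting that, Chevalley--Shephard--Todd is a statement about linear actions on affine space, and ``smoothness of $Y$ along $F_p$ forces the stabilizers to act by pseudo-reflections'' is not a correct global inference --- smoothness of the total space along a fiber does not give smoothness of the fiber, as your own example of conic bundles over a smooth discriminant shows. In addition, the invocation of ``the induction hypothesis for Theorem~\ref{thm:smth-imgs}'' is not available: the paper's proof of that theorem is not by induction on dimension and uses this lemma as an input, so you would have to restructure the whole argument to make the induction well-founded, and even then you would need smoothness of $F_p$ first. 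Finally, the bound $\rho(F_p)\le \rho(Y/Z)=1$ for \emph{every} fiber over $Z_{\mathrm{reg}}$ requires justification beyond a dimension count; the paper gets $\rho(F)=1$ via simple connectedness of $Z_{\mathrm{reg}}$ and a Hodge-theoretic argument, which your proposal omits.
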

\begin{proof}
It follows from ~\cite[Corollary 5.2]{FG12} that both $Y$ and $Z$ are of Fano type. Since $X$ and $W$ are both products of projective spaces, it follows from ~\cite[Lemma 4.1]{Dru16} that ${\rm Nef}(X)={\rm Pseff}(X)$ and ${\rm Nef}(W)={\rm Pseff}(W)$. It then follows from ~\cite[Lemma 4.2]{Dru16} that the same holds for $Y$ and $Z.$ The contraction $f$ is a fibration by ~\cite[Lemma 4.4]{Dru16}, hence $Z$ is locally factorial by ~\cite[Corollary 4.8]{Dru16}.\par
Denote by $R_\phi$ and $R_\psi$ the ramification divisors of $\phi$ and $\psi,$ respectively. We claim that $R_\phi\geq g^\ast R_\psi.$ Granting this claim, we proceed as follows. We note that the relative anticanonical divisor $-K_{Y/Z}$ satisfies

$$
\phi^\ast (-K_{Y/Z})=-K_{X/W}+(R_\phi-g^\ast R_\psi).
$$

The relative anticanonical divisor $-K_{X/W}$ is nef, and $R_\phi-g^\ast R_\psi$ is pseudo-effective by the claim. But ${\rm Nef}(X)={\rm Pseff}(X),$ so we see that $\phi^\ast (-K_{Y/Z})$ is nef. It follows that $-K_{Y/Z}$ is nef, and hence from (the proof of) ~\cite[Theorem A.13]{CPZ19} that $f$ is isotrivial over $Z_{reg}.$ As general fibers of $f$ are smooth by generic smoothness, this implies that $f$ is a smooth morphism over $Z_{reg}.$ Since $Z_{\rm reg}$ is simply connected by ~\cite[Lemma 3.13]{Dru16}, it follows from ~\cite[Theorem 4.18]{Voi03b} that the general fibers $F$ of the elementary contraction $f$ have $\rho(F)=1$. The desired conclusion now follows from ~\cite[Theorem 1]{OW02}.\par
We turn now to prove the claim. We note that $g^\ast D$ is a prime divisor on $X$ for all prime divisors $D$ on $W.$ Thus, we must show that 
$$
{\rm coeff}_{D}(R_\psi)\leq {\rm coeff}_{g^\ast D}(R_\phi)
$$
holds for all prime divisors $D$ on $W$.
For the rest of the proof, given a finite morphsim $h\colon U\rightarrow V$ and a prime divisor $E$ on $U$, we will denote by $h(E)$ the prime divisor on $V$ with support equal to that of the image of $E.$ Localizing at the generic points of $D$ and $\psi(D)$ and appealing to the Riemann--Hurwitz formula, we see that
$$
{\rm coeff}_{D}(R_\psi)={\rm coeff}_D(\psi^\ast \psi(D))-1.
$$
Similarly, we see that
$$
{\rm coeff}_{g^\ast D}(R_\phi)={\rm coeff}_{g^\ast D}(\phi^*\phi(g^*D))-1.
$$
We note that $\phi(g^*D)=f^*\psi(D)$. Indeed, $g^*D$ and $f^*\psi(D)$ are both prime divisors by Lemma~\ref{lem:extr-contr-vert-divs} and $\phi$ is finite, so it suffices to observe that $g^*D\leq g^*\psi^*\psi(D)=\phi^*f^*\psi(D).$ It follows that $\phi^*\phi(g^*D)=g^*\psi^*\psi(D),$ and hence that $${\rm coeff}_{g^*D}(R_\phi)={\rm coeff}_{g^*D}(g^*\psi^*\psi(D))-1.$$ It follows from the smoothness of $g$ that $${\rm coeff}_D(\psi^*\psi(D))={\rm coeff}_{g^*D}(g^*\psi^*\psi(D)),$$ so we win. 
\end{proof}

\begin{notation}\label{not:complementary-faces}
Let $V$ be a finite-dimensional real vector space and let $C\subset V$ be a simplicial cone.
Thus, there are linearly independent vectors $v_1,\hdots, v_n\in C$ such that $C={\rm Cone}(v_1,\hdots v_n)$ is the convex cone generated by these vectors. Given a subset $I\subset \{1,\hdots, r\},$ we will say that the faces ${\rm Cone}(v_i\vert i\in I)$ and ${\rm Cone}(v_i\vert i\notin I)$ are \textit{complementary faces} of $C$.
\end{notation}

\begin{proof}[Proof of Theorem~\ref{thm:smth-imgs}]
By Lemma~\ref{lem:reduce-to-fin-case}, we may assume that the morphism $\phi$ is finite. Replacing $\phi$ if necessary, we will assume moreover that $\phi$ is of minimal degree among all finte surjections from a product of projective spaces onto $Y.$ \par
We claim that $\phi$ is an isomorphism. Since $Y$ is a Fano manifold by Lemma~\ref{lem:img-fano-nef-is-pseff}, to show that $\phi$ is an isomorphism it suffices to show that $\phi$ is \'etale. Since $Y$ is smooth, the branch locus of $\phi$ is purely divisorial. Writing $B$ for the reduced sum of the irreducible components of this divisor, to show that $\phi$ is \'etale it will suffice to show that $B=0.$ Since $B$ is an effective Cartier divisor on the projective variety $Y$, to show that $B=0$ it will suffice to show that $B$ is numerivally trivial. Since every elementary contraction of $Y$ is a fibration by Lemma~\ref{lem:img-fano-nef-is-pseff}, to show that $B$ is numerically trivial it will suffice to show that it is vertical for every elementary contraction of $Y.$\par
Let $f\colon Y\rightarrow Z$ be an elementary contraction, and let $X\xrightarrow{g}W\xrightarrow{\psi}Y$ be the Stein factorization of $f\circ \phi.$ Denote by $\widetilde{X}$ the normalization of the main component of $W\times_{Z}Y.$ We obtain a commutative diagram 
\[
\xymatrix{
X \ar@/_/[ddr]_{g} \ar@/^/[drr]^{\phi} \ar@{->}[dr]|-{h}\\
&\widetilde{X} \ar[d]^{\widetilde{g}} \ar[r]^{\widetilde{\phi}} & Y\ar[d]^{f} \\
& W\ar[r]^{\psi} &Z.
}
\]
Note that $h$ and $\widetilde{\phi}$ must both be finite and surjective. Since $\widetilde{X}$ is normal and is the image of the Fano manifold $X,$ it follows from ~\cite[Theorem 5.1]{FG12} that $\widetilde{X}$ is of Fano type. In particular, $\widetilde{X}$ is Cohen--Macaulay. The morphism $\widetilde{g}$ is equidimensional since $g$ is equidimensional and $h$ is finite and surjective.
It follows from miracle flatness that $\widetilde{g}$ is flat.
The fibers of $f$ over $Z_{\rm reg}$ are all isomorphic to projective space by Lemma~\ref{lem:isotriv-over-smth-locus}, hence the same is true of the fibers of $\widetilde{g}$ over $\psi^{-1}(Z_{\rm reg}).$ Since $\psi$ is finite, it follows that the complement of $\psi^{-1}(Z_{\rm reg})$ has codimension at least two in $W.$ It now follows from ~\cite[Theorem 3]{ARM14} that $\widetilde{g}$ is a smooth morphism. In particular, $\widetilde{X}$ is smooth. \par
We claim that $h$ is an isomorphism. To show this, it will suffice by our choice of $\phi$ to show that $\widetilde{X}$ is a product of projective spaces. Since $\overline{\mathrm{NE}}(\widetilde{X})$ is simplicial by Lemma~\ref{lem:img-fano-nef-is-pseff} and Theorem ~\ref{simpliciality}, we may consider the contraction $\widehat{g}\colon \widetilde{X}\rightarrow \widehat{W}$ of the face of $\overline{\mathrm{NE}}(\widetilde{X})$ complementary in the sense of Notation ~\ref{not:complementary-faces} to the face contracted by $\widetilde{g}$. Let $X\xrightarrow{\widehat{h}} \widehat{X}\xrightarrow{\pi}\widehat{W}$ be Stein factorization of $\widehat{g}\circ h.$ The diagram 
\[
\xymatrix{ 
\widehat{X}\ar[d]_-{\pi} & X\ar[l]_-{\widehat{h}}\ar[d]_-{h} \ar[r]^-{g}&W \ar[d]^{{\rm Id}_W}\\ 
\widehat{W} &
\widetilde{X}\ar[l]_-{\widehat{g}} \ar[r]^-{\widetilde{g}}&W
}
\]
satisfies the hypotheses of Lemma~\ref{lem:shared-base}. It follows that $\widetilde{X}\cong W\times \widehat{W}$ and that $\widetilde{g}$ is the projection away from $\widehat{W}.$ Thus, $\widehat{W}$ must be isomorphic to projective space, and $\widetilde{X}$ must be isomorphic to a product of projective spaces. Thus,  by the minimality of $\deg \phi$, we conclude that $h$ must be an isomorphism. In particular, it follows that $X$ is the normalization of the main component of $Y \times_Z W$.
\par
Since $W$ is smooth, there is a nonempty open $U\subset Z$ over which the finite morphism $\psi$ is \'etale. Since $X$ is the normalization of the main component of $Y \times_Z W$, it follows that $\phi$ is \'etale over $f^{-1}(U).$ Thus, $B$ must have support in the complement of $f^{-1}(U)$ and must be vertical over $Z$.
\end{proof}

\bibliographystyle{habbvr}
\bibliography{bib}

\end{document}